\newtheorem{theorem}{Theorem}
\newtheorem{corollary}[theorem]{Corollary}
\newtheorem{definition}[theorem]{Definition}
\newtheorem{lemma}[theorem]{Lemma}
\newtheorem{proposition}[theorem]{Proposition}
\newenvironment{proof}[1][Proof]{\noindent\textbf{#1.} }{\ \rule{0.5em}{0.5em}}
\begin{document}

\title{Duality between $Y$-convexity and $Y^{\times }$-concavity of linear
operators between Banach lattices}
\author{F. Galaz-Fontes and J.L. Hern\'{a}ndez-Barradas \\
%EndAName
Centro de Investigaci\'{o}n en Matem\'{a}ticas (CIMAT)}
\maketitle

\begin{abstract}
We continue the study of $Y$-convexity, a property which is obtained by
considering a real Banach sequence lattice $Y$ instead of $\ell ^{p}$ for a
linear operator $T:E\rightarrow X,$ where $E$ is a Banach space and $X$ is a
Banach lattice. We introduce some vector sequence spaces in order to
characterize the $Y$-convexity of $T$ by means of the continuity of an
associated operator $\widetilde{T}.$ Analogous results for $Y$-concavity are
also obtained. Finally, the duality between $Y$-convexity and $Y^{\times }$%
-concavity is proven.
\end{abstract}

\section{Introduction}

In \cite{1er art} we present generalizations for $p$-convexity and $q$%
-concavity by considering instead of $\ell ^{p}$ or $\ell ^{q}$ a Banach
sequence lattice $Y$ (see definitions below). In the same paper it is proven
that $Y$\textit{-convexity} and $Y$\textit{-concavity} share many basic
properties with the classical cases. In this work we will establish the
duality relation between $Y$-convexity and $Y$-concavity (see \cite[Ch. 1.d]%
{Linden}). We start by recalling the definitions of $Y$-convexity and $Y$%
-concavity.

\bigskip

Let $\left( \Omega ,\Sigma ,\mu \right) $ be a measure space, where $\Omega $
is a non empty set. Take $L^{0}\left( \mu \right) $ to be the vector space
of (equivalence classes of) measurable functions and denote by $\leq $ the $%
\mu $-almost everywhere order on $L^{0}\left( \mu \right) $. Then a $\mu $%
\textit{-Banach function space} ($\mu $-B.f.s.) $Y$ is an ideal of
measurable functions that is also a Banach\ space with an order-preserving
norm $\left\Vert \cdot \right\Vert _{Y}.$ That is, if $f\in L^{0}\left( \mu
\right) $ and $g\in Y$ are such that $\ \left\vert f\right\vert \leq
\left\vert g\right\vert ,$ then $f\in Y$ and $\left\Vert f\right\Vert
_{Y}\leq \left\Vert g\right\Vert _{Y}$. Notice that a $\mu $-B.f.s. is a%
\textit{\ Banach lattice}. That is, a Riesz space with $\left\vert
f\right\vert =f\vee -f,$ that is also a Banach space with respect to an
order-preserving norm.

\bigskip

Let us now consider the measure space $\left( 
%TCIMACRO{\U{2115} }%
%BeginExpansion
\mathbb{N}
%EndExpansion
,2^{%
%TCIMACRO{\U{2115} }%
%BeginExpansion
\mathbb{N}
%EndExpansion
},\mu _{\#}\right) ,$ where $\mu _{\#}$ is the counting measure. If $Y$ is a 
$\mu _{\#}$-B.f.s. and $e_{n}\in Y,n\in 
%TCIMACRO{\U{2115} }%
%BeginExpansion
\mathbb{N}
%EndExpansion
$, where $e_{n}$ is the $n$th canonical sequence, we say that $Y$ is a 
\textit{Banach sequence lattice (B.s.l}.). For every finite sequence $\left(
t_{1},...,t_{n}\right) \in 
%TCIMACRO{\U{211d} }%
%BeginExpansion
\mathbb{R}
%EndExpansion
^{n},n\in 
%TCIMACRO{\U{2115} }%
%BeginExpansion
\mathbb{N}
%EndExpansion
$ we will write $\left( t_{1},...,t_{n}\right) \in Y$ to express the
sequence $\left\{ t_{1},...,t_{n},0,...\right\} \in Y.$ Also, $\left\Vert
\cdot \right\Vert _{%
%TCIMACRO{\U{211d} }%
%BeginExpansion
\mathbb{R}
%EndExpansion
^{n},Y}$ will indicate the norm of $Y$ restricted to $%
%TCIMACRO{\U{211d} }%
%BeginExpansion
\mathbb{R}
%EndExpansion
^{n}.$ When $Y=\ell ^{p}$ we will write $\left\Vert \cdot \right\Vert _{%
%TCIMACRO{\U{211d} }%
%BeginExpansion
\mathbb{R}
%EndExpansion
^{n},p}$ instead of $\left\Vert \cdot \right\Vert _{%
%TCIMACRO{\U{211d} }%
%BeginExpansion
\mathbb{R}
%EndExpansion
^{n},\ell ^{p}}$. Notice that Orlicz sequence spaces and Lebesgue spaces $%
\ell ^{p},1\leq p\leq \infty $ are B.s.l..

\bigskip

Given $n\in 
%TCIMACRO{\U{2115} }%
%BeginExpansion
\mathbb{N}
%EndExpansion
,$ let $\mathcal{H}_{n}$ denote the space of all $%
%TCIMACRO{\U{211d} }%
%BeginExpansion
\mathbb{R}
%EndExpansion
$-valued continuous functions $h$ on $%
%TCIMACRO{\U{211d} }%
%BeginExpansion
\mathbb{R}
%EndExpansion
^{n}$ which are homogeneous of degree 1, that is, $h(\lambda x)=\lambda h(x)$
for all $x\in 
%TCIMACRO{\U{211d} }%
%BeginExpansion
\mathbb{R}
%EndExpansion
^{n}$ and $\lambda \geq 0$. Let $X$ be a Banach lattice. Then, vias
Krivine's functional calculus, for every $x=(x_{1},...,x_{n})\in X^{n},$ an
operator $\tau _{n,x}:\mathcal{H}_{n}\rightarrow X$ is defined. So, when
fixing $h\in \mathcal{H}_{n},$ an operator $\widetilde{h}:X^{n}\rightarrow X$
is naturally induced by taking $\widetilde{h}\left( x\right) :=\tau
_{n,x}\left( h\right) .$ When $h\in \mathcal{H}_{n}$ is a norm $\left\Vert
\cdot \right\Vert _{%
%TCIMACRO{\U{211d} }%
%BeginExpansion
\mathbb{R}
%EndExpansion
^{n},Y},$ we will use the notation $\left\Vert x_{1},...,x_{n}\right\Vert
_{Y}$ instead of $\widetilde{h}\left( x\right) $

\bigskip

Let $Y$ be a B.s.l.. Then, given a Banach lattice $X$ and a Banach space $E$
we will say that a linear operator $T:$ $E\rightarrow X$ is $Y$\textit{%
-convex} if there exists a constant $C>0$ satisfying 
\begin{equation}
\left\Vert \left\Vert \left( Tw_{1},...,Tw_{n}\right) \right\Vert
_{Y}\right\Vert _{X}\leq C\left\Vert \left( \left\Vert w_{1}\right\Vert
_{E},...,\left\Vert w_{n}\right\Vert _{E}\right) \right\Vert _{Y},
\label{20071}
\end{equation}%
for each $n\in 
%TCIMACRO{\U{2115} }%
%BeginExpansion
\mathbb{N}
%EndExpansion
$ and $w_{1},...,w_{n}\in E.$

\bigskip

\indent The corresponding definition for $Y$-concavity is now clear. A
linear operator $S:X\rightarrow E$ is $Y$\textit{-concave\ }if there exists
a constant $K>0$ satisfying 
\begin{equation}
\left\Vert \left( \left\Vert Sf_{1}\right\Vert _{E},...,\left\Vert
Sf_{n}\right\Vert _{E}\right) \right\Vert _{Y}\leq K\left\Vert \left\Vert
\left( f_{1},...,f_{n}\right) \right\Vert _{Y}\right\Vert _{X},
\label{20072}
\end{equation}%
for each $n\in 
%TCIMACRO{\U{2115} }%
%BeginExpansion
\mathbb{N}
%EndExpansion
$ and $f_{1},...,f_{n}\in X.$\newline

\bigskip

Observe that when we take $Y=\ell ^{p},1\leq p\leq \infty $ in the above
definitions, conditions $\left( \ref{20071}\right) $ and $\left( \ref{20072}%
\right) $ are reduced to the respective classical $p$-convexity and $p$%
-concavity properties.

\bigskip

This work is divided into 7 sections. After giving an introduction in
Section 1, in Section 2 we present the classical results and definitions
that will be used throughout the paper.

\bigskip

In Section 3 we present some results obtained in \cite{1er art} that will be
needed in the following sections. First we establish that most of the
properties for $\tau _{n,x}\left( \left\Vert \cdot \right\Vert _{%
%TCIMACRO{\U{211d} }%
%BeginExpansion
\mathbb{R}
%EndExpansion
^{n},p}\right) $ are also valid considering $h=\left\Vert \cdot \right\Vert
_{%
%TCIMACRO{\U{211d} }%
%BeginExpansion
\mathbb{R}
%EndExpansion
^{n},Y}$. This gives us the function $\left\Vert \cdot \right\Vert
_{X^{n},Y,\tau }:X^{n}\rightarrow 
%TCIMACRO{\U{211d} }%
%BeginExpansion
\mathbb{R}
%EndExpansion
$ defined by%
\begin{equation*}
\left\Vert \left( x_{1},...,x_{n}\right) \right\Vert _{X^{n},Y,\tau
}:=\left\Vert \tau _{n,x}\left( \left\Vert \cdot \right\Vert _{%
%TCIMACRO{\U{211d} }%
%BeginExpansion
\mathbb{R}
%EndExpansion
^{n},Y}\right) \right\Vert _{X}.
\end{equation*}%
This function is an order-preserving norm for which $X^{n}$ is a Banach
lattice and that it is equivalent to the natural induced norm $\left\Vert
\cdot \right\Vert _{X^{n},Y}:X^{n}\rightarrow 
%TCIMACRO{\U{211d} }%
%BeginExpansion
\mathbb{R}
%EndExpansion
$ given by%
\begin{equation*}
\left\Vert \left( x_{1},...,x_{n}\right) \right\Vert _{X^{n},Y}:=\left\Vert
\left( \left\Vert x_{1}\right\Vert _{X},...,\left\Vert x_{n}\right\Vert
_{X}\right) \right\Vert _{Y}.
\end{equation*}%
Then we introduce the concepts of $Y$\textit{-convexity} and $Y$-\textit{%
concavity.}

\bigskip

In Section 4, given a Banach space $E,$ we consider the vector sequence space%
\begin{equation*}
\ell ^{Y}\left( E\right) :=\left\{ \left\{ w_{j}\right\} _{j=1}^{\infty }\in
E^{%
%TCIMACRO{\U{2115} }%
%BeginExpansion
\mathbb{N}
%EndExpansion
}:\left\{ \left\Vert w_{j}\right\Vert _{E}\right\} _{j=1}^{\infty }\in
Y\right\}
\end{equation*}%
and the function $\left\Vert \cdot \right\Vert _{\ell ^{Y}\left( E\right)
}:\ell ^{Y}\left( E\right) \rightarrow 
%TCIMACRO{\U{211d} }%
%BeginExpansion
\mathbb{R}
%EndExpansion
$ given by%
\begin{equation*}
\left\Vert w\right\Vert _{\ell ^{Y}\left( E\right) }:=\left\Vert \left\{
\left\Vert w_{j}\right\Vert _{E}\right\} _{j=1}^{\infty }\right\Vert
_{Y},\forall w=\left\{ w_{j}\right\} _{j=1}^{\infty }\in \ell ^{Y}\left(
E\right) .
\end{equation*}%
We prove that $\left\Vert \cdot \right\Vert _{\ell ^{Y}\left( E\right) }$ is
a norm on $\ell ^{Y}\left( E\right) $ and that $\ell ^{Y}\left( E\right) $
is a Banach space. If, in addition, $E$ is a Banach lattice, then $\ell
^{Y}\left( E\right) $ is a Banach lattice.\newline
\newline
\indent Similarly, given a Banach lattice $X,$ we define the vector sequence
lattice \newline
\begin{equation*}
\ell ^{\tau ,Y}\left( X\right) :=\left\{ \left\{ x_{j}\right\}
_{j=1}^{\infty }\in X^{%
%TCIMACRO{\U{2115} }%
%BeginExpansion
\mathbb{N}
%EndExpansion
}:\sup_{n\in 
%TCIMACRO{\U{2115} }%
%BeginExpansion
\mathbb{N}
%EndExpansion
}\left\{ \left\Vert \left( x_{1},...,x_{n}\right) \right\Vert _{X^{n},Y,\tau
}\right\} <\infty \right\}
\end{equation*}%
and the function $\left\Vert \cdot \right\Vert _{\ell ^{\tau ,Y}\left(
X\right) }:\ell ^{\tau ,Y}\left( X\right) \rightarrow 
%TCIMACRO{\U{211d} }%
%BeginExpansion
\mathbb{R}
%EndExpansion
$ given by%
\begin{equation*}
\left\Vert \left\{ x_{j}\right\} _{j=1}^{\infty }\right\Vert _{\ell ^{\tau
,Y}\left( X\right) }:=\sup_{n\in 
%TCIMACRO{\U{2115} }%
%BeginExpansion
\mathbb{N}
%EndExpansion
}\left\{ \left\Vert \left( x_{1},...,x_{n}\right) \right\Vert _{X^{n},Y,\tau
}\right\} .
\end{equation*}%
We prove that $\left\Vert \cdot \right\Vert _{\ell ^{\tau ,Y}\left( X\right)
}$ is a norm on $\ell ^{\tau ,Y}\left( X\right) $ and that $\ell ^{\tau
,Y}\left( X\right) $ is Banach lattices. \newline
\newline
\indent Let us denote by $\ell _{00}^{Y}\left( E\right) $ the linear
subspace of eventually zero sequences in $\ell ^{Y}\left( E\right) $ and
write $\ell _{0}^{Y}\left( E\right) $ to denote the closure of $\ell
_{00}^{Y}\left( E\right) $ in $\ell ^{Y}\left( E\right) .$ Analogously we
define $\ell _{00}^{\tau ,Y}\left( X\right) $ and $\ell _{0}^{\tau ,Y}\left(
X\right) .$ Then, given a linear operator $T:E\rightarrow X,$ we define the
associated operator $\widetilde{T}:$ $\ell _{00}^{Y}\left( E\right)
\rightarrow \ell _{00}^{\tau ,Y}\left( X\right) ,$ given by $\widetilde{T}%
\left( \left\{ w_{n}\right\} _{n=1}^{\infty }\right) :=\left\{
Tw_{n}\right\} _{n=1}^{\infty }$ and prove that $T$ is $Y$-convex if and
only if $\widetilde{T}$ is continuous.

$\bigskip $

In section 5 we recall \textit{H\"{o}lder's inequality} and \textit{%
associated K\"{o}the space }for a saturated $\mu $-Banach function space $V.$
Then we analyze the case when $V$ is a Banach sequence lattice $Y\ $and
prove that the dual space of $\ell _{0}^{\tau ,Y}\left( X\right) $ is also a
vector sequence space satisfying%
\begin{equation*}
\ell _{0}^{\tau ,Y}\left( X\right) ^{\ast }=\ell ^{\tau ,Y^{\times }}\left(
X^{\ast }\right) .
\end{equation*}

\bigskip

In section 6 we proceed as in Section 5, with the objective to find a
representation for the dual space $\ell _{0}^{\tau ,Y}\left( X\right) ^{\ast
}.$ For this we recall the concept of abstract $L^{1}$-space and some of its
properties. In Lemma \ref{lemaqcv} we generalize an inequality given by
Lindenstrauss and Tzafriri in \cite[Prop. 1.d.2.]{Linden}. Then we prove
that the dual spaces of $\ell _{0}^{Y}\left( E\right) $ is also a vector
sequence space and that%
\begin{equation*}
\ell _{0}^{Y}\left( E\right) ^{\ast }=\ell ^{Y^{\times }}\left( E^{\ast
}\right) ,
\end{equation*}%
\indent Finally in Section 7 we use these results to establish that $Y$%
-convexity and $Y^{\times }$-concavity are dual properties, that is, given a
Banach space $E$ and a Banach lattice $X,$\smallskip \newline
\indent$i)$ An operator $T\in \mathcal{L}\left( E,X\right) $ is $Y$-convex
if and only if the transpose operator $T^{\ast }:X^{\ast }\rightarrow
E^{\ast }$ is $Y^{\times }$-concave$.$\smallskip \newline
\indent$ii)$ An operator $S\in \mathcal{L}\left( X,E\right) \ $is $Y$%
-concave if and only if the transpose operator $S^{\ast }:E^{\ast
}\rightarrow X^{\ast }$ is $Y^{\times }$-convex.\smallskip \newline

\section{Preliminary results and Krivine's funcional calculus}

Throughout this work we will consider only real vector spaces. We will
indicate the \textit{dual pairing} on a normed space $V$ by $\left\langle
\cdot ,\cdot \right\rangle _{V\times V^{\ast }}:V\times V^{\ast }\rightarrow 
%TCIMACRO{\U{211d} }%
%BeginExpansion
\mathbb{R}
%EndExpansion
.$ Also we will denote by $\mathcal{L}\left( V,W\right) $ the space of
continuous linear operators between two normed spaces $V,W.$ Some of the
definitions and results presented in this section can be consulted in \cite[%
Ch. 1]{Aliprantis} and \cite[Ch. 1 and 2]{Zaanen1}.

\bigskip

Given a Riesz space $X,$ we will denote the\textit{\ positive cone of }$X$
by $X^{+}$ i.e., 
\begin{equation*}
X^{+}:=\left\{ x\in X:x\geq 0\right\} .
\end{equation*}%
\indent Recall a linear operator between Riesz spaces $P:X\rightarrow Z$ is 
\textit{positive} if $P\left( X^{+}\right) \subset Z^{+}$ (see \cite[Ch. 1]%
{Aliprantis})$.$ Also, a linear operator $T:E\rightarrow F$ is \textit{%
regular} if it can be written as a difference of two positive operators. The
space of all regular operators from $X$ into $Z$ is denoted by $\mathcal{L}%
_{r}\left( X,Z\right) .$

\bigskip

Observe that a positive operator $P:X\rightarrow Z$ satisfies%
\begin{equation}
\left\vert Px\right\vert \leq P\left\vert x\right\vert ,\forall x\in X
\label{positive}
\end{equation}%
and, if in addition $X,Z$ are Banach lattices, then $P:X\rightarrow Z$ is
continuous.

\bigskip

Given normed spaces $V,W$ and $T\in \mathcal{L}\left( V,W\right) ,$ we will
indicate the \textit{transpose operator} of $T$ by $T^{\ast }:W^{\ast
}\rightarrow V^{\ast },$ determined by%
\begin{equation*}
\left\langle Tx,\varphi \right\rangle _{W\times W^{\ast }}=\left\langle
x,T^{\ast }\varphi \right\rangle _{V\times V^{\ast }},\forall x\in V,\forall
\varphi \in W^{\ast }.
\end{equation*}%
\indent Recall that $\left\Vert T\right\Vert =\left\Vert T^{\ast
}\right\Vert .$

\bigskip

The following well-known property of $\mu $-B.f.s. will be useful for our
work \cite[Prop. 2.2]{Okada}.

\begin{theorem}
Let $Y$ be a $\mu $-B.f.s., $\left\{ f_{n}\right\} _{n=1}^{\infty }\subset Y$
and $f\in Y.$ If $f_{n}\overset{Y}{\rightarrow }f$, then there exists a
subsequence $\left\{ f_{n_{k}}\right\} _{k=1}^{\infty }$ such that $%
f_{n_{k}}\rightarrow f$ $\mu $-a.e..\label{convmedida}
\end{theorem}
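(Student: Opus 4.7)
The plan is the standard Riesz-type subsequence argument. First I would use $f_{n}\overset{Y}{\rightarrow}f$ to extract a subsequence $\{f_{n_{k}}\}_{k=1}^{\infty}$ with $\|f_{n_{k}}-f\|_{Y}\leq 2^{-k}$ for every $k$. Then I would form the monotone increasing sequence
\[
g_{N}:=\sum_{k=1}^{N}|f_{n_{k}}-f|\in Y^{+},\qquad N\in\mathbb{N},
\]
which by the triangle inequality and the lattice-norm property of $Y$ satisfies $\|g_{N}\|_{Y}\leq \sum_{k=1}^{\infty}2^{-k}=1$, uniformly in $N$.

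The crux would then be to show that the pointwise supremum $g:=\sup_{N}g_{N}\in L^{0}(\mu)$ is finite $\mu$-a.e. For this I would exploit the ideal structure of $Y$: for each fixed $M>0$, on the set $B_{N,M}:=\{g_{N}>M\}$ one has $M\chi_{B_{N,M}}\leq g_{N}$, so by the ideal property $\chi_{B_{N,M}}\in Y$ with $\|\chi_{B_{N,M}}\|_{Y}\leq 1/M$. Since $B_{N,M}$ is increasing in $N$ with union $B_{M}:=\{g>M\}$, and $\{g=\infty\}=\bigcap_{M}B_{M}$, letting $N\to\infty$ and then $M\to\infty$ via the Riesz--Fischer/Fatou-type completeness of a $\mu$-B.f.s. forces $\mu(\{g=\infty\})=0$.

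Finally, at any $\omega$ with $g(\omega)<\infty$, the series $\sum_{k}|f_{n_{k}}(\omega)-f(\omega)|$ converges, hence its $k$-th term tends to $0$, giving $f_{n_{k}}(\omega)\to f(\omega)$. This yields $f_{n_{k}}\to f$ $\mu$-a.e., as required.

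The step I expect to be the main obstacle is the passage from the uniform bound $\|\chi_{B_{N,M}}\|_{Y}\leq 1/M$ to $\mu(\{g=\infty\})=0$. In a general $\mu$-B.f.s. one does \emph{not} have an automatic monotone convergence theorem for the norm, so one must invoke the completeness of $Y$ as an ideal inside $L^{0}(\mu)$ (equivalently, a Fatou/Riesz--Fischer property of the lattice norm). This is precisely the technical ingredient supplied by the Okada reference cited here, which makes the above sketch rigorous without additional assumptions on $Y$.
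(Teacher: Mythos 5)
First, a point of reference: the paper does not prove this statement at all --- it is quoted as a known fact from \cite[Prop. 2.2]{Okada} --- so there is no in-paper proof to compare against, and I assess your argument on its own terms. Your skeleton is the standard and correct one: pass to a subsequence with $\left\Vert f_{n_{k}}-f\right\Vert _{Y}\leq 2^{-k}$, form the increasing sums $g_{N}=\sum_{k=1}^{N}\left\vert f_{n_{k}}-f\right\vert $, and reduce the theorem to showing that $g=\sup_{N}g_{N}$ is finite $\mu $-a.e.

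The genuine gap is exactly at the step you flag as the crux. From the uniform bound $\left\Vert \chi _{B_{N,M}}\right\Vert _{Y}\leq 1/M$ you cannot ``let $N\rightarrow \infty $'' to control $B_{M}=\bigcup_{N}B_{N,M}$: that passage is precisely the $\sigma $-Fatou property, and a $\mu $-B.f.s. as defined in this paper (an ideal of $L^{0}(\mu )$ that is a Banach space under an order-preserving norm) need not have it. Your parenthetical ``completeness $\ldots$ (equivalently, a Fatou/Riesz--Fischer property)'' conflates two non-equivalent things: completeness of a normed ideal gives the Riesz--Fischer property, but not the Fatou property --- for instance $c_{0}\subset
\mathbb{R}^{\mathbb{N}}$ is a complete ideal in which $\sum_{k=1}^{N}e_{k}$ has norm $1$ for every $N$ while its pointwise supremum leaves the space. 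So the level-set route, as set up, cannot be closed by an appeal to completeness. The repair is short and avoids the level sets entirely: $\left\{ g_{N}\right\} $ is Cauchy in $Y$ because $\left\Vert g_{N}-g_{N^{\prime }}\right\Vert _{Y}\leq \sum_{k>N^{\prime }}2^{-k}$, hence $g_{N}\rightarrow h$ in $Y$ for some $h\in Y$; the positive cone of a Banach lattice is norm-closed, so $h-g_{N}=\lim_{N^{\prime }}\left( g_{N^{\prime }}-g_{N}\right) \geq 0$ for every $N$, whence $\sup_{N}g_{N}\leq h$ pointwise $\mu $-a.e.; since $h\in L^{0}(\mu )$ is finite a.e., the series $\sum_{k}\left\vert f_{n_{k}}-f\right\vert $ converges a.e., and your final paragraph then finishes the proof.
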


\begin{corollary}
Let $Y\ $be a $\mu $-B.f.s., $f\in L^{0}(\mu )$ and $\left\{ f_{n}\right\}
_{n=1}^{\infty }\subset Y$ a convergent sequence in $Y.$ If $%
f_{n}\rightarrow f$ $\mu $-a.e., then $f\in Y$ and $f_{n}\overset{Y}{%
\rightarrow }f$.\label{coromuctp}
\end{corollary}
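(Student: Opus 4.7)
The plan is to bootstrap from Theorem \ref{convmedida} by exploiting the uniqueness of limits: a sequence in $L^{0}(\mu)$ that converges $\mu$-a.e. has a $\mu$-a.e. unique limit, so the $Y$-limit of $\{f_{n}\}$ must coincide with its $\mu$-a.e. limit $f$.

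In more detail, by hypothesis $\{f_{n}\}$ converges in $Y$; call its $Y$-limit $g \in Y$. Applying Theorem \ref{convmedida} to $f_{n}\overset{Y}{\rightarrow} g$, I obtain a subsequence $\{f_{n_{k}}\}$ with $f_{n_{k}} \rightarrow g$ $\mu$-a.e. Since the full sequence satisfies $f_{n} \rightarrow f$ $\mu$-a.e., so does this subsequence, giving $f_{n_{k}} \rightarrow f$ $\mu$-a.e. Almost-everywhere limits are unique modulo $\mu$-null sets, so $f = g$ as elements of $L^{0}(\mu)$. Hence $f \in Y$ and $f_{n} \overset{Y}{\rightarrow} f$.

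There is no real obstacle here; the only thing to be slightly careful about is that the identification $f = g$ takes place in $L^{0}(\mu)$ (i.e.\ as equivalence classes modulo $\mu$-null sets), which is exactly what is needed to conclude $f \in Y$, since $Y$ is defined as an ideal inside $L^{0}(\mu)$.
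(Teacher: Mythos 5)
Your argument is correct and is precisely the standard deduction the paper intends (the corollary is stated without an explicit proof, as an immediate consequence of Theorem \ref{convmedida}): take the $Y$-limit $g$, extract a subsequence converging $\mu$-a.e.\ to $g$, and identify $g=f$ in $L^{0}(\mu)$ by uniqueness of a.e.\ limits. Nothing is missing.
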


\bigskip

Along this work we will use several well-known definitions and properties
about the topological dual, the completion and quotient space of a normed
lattice. Some of these properties are presented below and can be consulted
in \cite[Chapter 1]{Aliprantis} and \cite[Chapter 1]{Linden}.

\bigskip

Now, let $X$ be a Banach lattice. It is well-known that the dual space $%
X^{\ast }$ is a Banach lattice with the order given by 
\begin{equation}
\varphi _{1}\leq \varphi _{2}\text{ if }\varphi _{1}\left( x\right) \leq
\varphi _{2}\left( x\right) ,\forall x\in X^{+}  \label{343}
\end{equation}%
\indent We also have that%
\begin{equation}
\left\vert \varphi \left( x\right) \right\vert \leq \left\vert \varphi
\right\vert \left( \left\vert x\right\vert \right) ,\forall \varphi \in
X^{\ast },x\in X  \label{jijiji}
\end{equation}%
and,\ given $\varphi _{1}...,\varphi _{n}\in X^{\ast }$ the following holds: 
\begin{equation}
\left( \bigvee\limits_{j=1}^{n}\varphi _{j}\right) \left( x\right) =\sup
\left\{ \sum_{j=1}^{n}\varphi _{j}\left( x_{j}\right)
:\sum_{j=1}^{n}x_{j}=x\right\} ,x\in X^{+},0\leq x_{j},j=1,...,n.
\label{defi1802}
\end{equation}

\bigskip

On the other hand, let $E$ be a normed lattice and denote by $\widetilde{E}$
its completion. Then $\widetilde{E}$ is a Banach lattice with the order
given by%
\begin{equation}
u\leq v\text{ if }v-u\in \widetilde{E}^{+},\forall u,v\in \widetilde{E},
\label{defr}
\end{equation}%
where%
\begin{equation}
\widetilde{E}^{+}:=\left\{ w\in \widetilde{E}:\exists \left\{ w_{n}\right\}
_{n=1}^{\infty }\subset E^{+}\text{ s.t. }\left\Vert w-w_{n}\right\Vert
_{E}\rightarrow 0\right\} .  \label{2501}
\end{equation}

\begin{lemma}
Let $X$ be a Riesz space and $Z\subset X$ an ideal. Then the quotient space $%
X/Z$ is a Riesz space with the order given by 
\begin{equation}
\lbrack x]\leq \lbrack y]\text{ if there exists }x_{1}\in \lbrack x]\text{
and }y_{1}\in \lbrack y]\text{ s.t. }x_{1}\leq y_{1}.  \label{344}
\end{equation}
\end{lemma}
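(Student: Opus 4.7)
The plan is to reformulate the proposed order in a more convenient form and then check the Riesz-space axioms one at a time. I claim that for $x,y\in X$,
\[
[x]\le[y]\iff (x-y)^{+}\in Z\iff \exists\, z\in Z^{+}\text{ with }x\le y+z.
\]
Indeed, if $[x]\le[y]$ then there exist $z_{1},z_{2}\in Z$ with $x+z_{1}\le y+z_{2}$, whence $(x-y)^{+}\le|z_{2}-z_{1}|\in Z$, and since $Z$ is an ideal this forces $(x-y)^{+}\in Z$. Conversely, taking $z:=(x-y)^{+}\in Z^{+}$ gives $x-z\le y$ with $x-z\in[x]$.

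With this characterization, I would verify that $\le$ is a partial order on $X/Z$. Reflexivity is immediate from $(x-x)^{+}=0\in Z$. Antisymmetry: $[x]\le[y]$ and $[y]\le[x]$ force $(x-y)^{+}$ and $(x-y)^{-}=(y-x)^{+}$ both to lie in $Z$, so $x-y\in Z$ and $[x]=[y]$. Transitivity: if $x\le y+z_{1}$ and $y\le w+z_{2}$ with $z_{1},z_{2}\in Z^{+}$, then $x\le w+(z_{1}+z_{2})$ and $z_{1}+z_{2}\in Z^{+}$. Compatibility with the vector-space structure follows the same pattern: $x\le y+z$ implies $x+u\le y+u+z$ for any $u\in X$, and $\lambda x\le \lambda y+\lambda z$ with $\lambda z\in Z^{+}$ for $\lambda\ge 0$.

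It then remains to exhibit finite lattice operations. I would show that $[x]\vee[y]=[x\vee y]$: since $x,y\le x\vee y$ in $X$, both $[x]$ and $[y]$ lie below $[x\vee y]$. If $[u]$ is any other upper bound, the characterization yields $z_{1},z_{2}\in Z^{+}$ with $x\le u+z_{1}$ and $y\le u+z_{2}$; because $Z$ is an ideal it is also a sublattice of $X$, so $z_{1}\vee z_{2}\in Z^{+}$, and $x\vee y\le u+z_{1}\vee z_{2}$ gives $[x\vee y]\le[u]$. Combined with the identity $[x]\wedge[y]=-\bigl((-[x])\vee(-[y])\bigr)=[x\wedge y]$, this establishes that $X/Z$ is a Riesz space.

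The only delicate point in the argument is the systematic use of the ideal property of $Z$: finite sums and finite suprema of elements of $Z^{+}$ must remain in $Z^{+}$, and elements dominated in absolute value by a member of $Z$ must themselves belong to $Z$. Both facts are immediate from the definition of an ideal in a Riesz space, so once the characterization $(x-y)^{+}\in Z$ of the order is in hand the verification is essentially bookkeeping.
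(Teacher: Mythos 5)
Your proof is correct. The paper itself gives no argument for this lemma: it is listed among the ``well-known definitions and properties'' to be consulted in the cited references (Aliprantis--Burkinshaw, Lindenstrauss--Tzafriri), so there is nothing to compare against. Your reduction of the order to the condition $(x-y)^{+}\in Z$, followed by the verification of the partial-order axioms, translation/scaling compatibility, and the identification $[x]\vee[y]=[x\vee y]$ using that an ideal is a solid sublattice, is exactly the standard argument those references give, and all the steps (in particular the use of solidity to get $(x-y)^{+}\in Z$ from $(x-y)^{+}\le|z_{2}-z_{1}|$, and $z_{1}\vee z_{2}\in Z^{+}$ in the supremum step) check out.
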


A particular case of the above lemma which will be usefull later is when $X$
is a Banach lattice and$\ \varphi \in \left( X^{\ast }\right) ^{+}$. Then%
\begin{equation}
N:=\left\{ x\in X:\varphi \left( \left\vert x\right\vert \right) =0\right\}
\label{qw2}
\end{equation}%
is an ideal and consequently $X/N$ is a Riesz space. Furthermore the
function $\left\Vert \cdot \right\Vert _{\varphi }:X/N\rightarrow 
%TCIMACRO{\U{211d} }%
%BeginExpansion
\mathbb{R}
%EndExpansion
$ defined by%
\begin{equation}
\left\Vert \lbrack x]\right\Vert _{\varphi }:=\varphi \left( \left\vert
x\right\vert \right) ,\forall x\in X  \label{qw3}
\end{equation}%
is an order-preserving norm on $X/N.$

\QTP{Body Math}
$\bigskip $

\bigskip We now continue describing Krivine's functional calculus (\cite[Ch.
16]{Diestel}, \cite[Theorem 1.d.1]{Linden}, \cite[Lemma 2.51]{Okada}) and
some of its general results.\bigskip

Given $n\in 
%TCIMACRO{\U{2115} }%
%BeginExpansion
\mathbb{N}
%EndExpansion
,$ recall $\mathcal{H}_{n}$ be the space of continuous functions $h:%
%TCIMACRO{\U{211d} }%
%BeginExpansion
\mathbb{R}
%EndExpansion
^{n}\rightarrow 
%TCIMACRO{\U{211d} }%
%BeginExpansion
\mathbb{R}
%EndExpansion
$ that are homogeneous of degree $1$ i.e., for every $\lambda \geq 0,$ $%
h\left( \lambda t_{1},...,\lambda t_{n}\right) =\lambda h\left(
t_{1},...,t_{n}\right) .$ Observe that every norm on $%
%TCIMACRO{\U{211d} }%
%BeginExpansion
\mathbb{R}
%EndExpansion
^{n}$ is in $\mathcal{H}_{n},$ in particular each $p$-norm, $\left\Vert
\cdot \right\Vert _{%
%TCIMACRO{\U{211d} }%
%BeginExpansion
\mathbb{R}
%EndExpansion
^{n},p},1\leq p\leq \infty .$

\bigskip

Let us denote by $S^{n-1}\subset 
%TCIMACRO{\U{211d} }%
%BeginExpansion
\mathbb{R}
%EndExpansion
^{n}$ the unit sphere with respect to the supremum norm $\left\Vert \cdot
\right\Vert _{%
%TCIMACRO{\U{211d} }%
%BeginExpansion
\mathbb{R}
%EndExpansion
^{n},\infty },$ given by $\left\Vert \left( a_{1},...,a_{n}\right)
\right\Vert _{%
%TCIMACRO{\U{211d} }%
%BeginExpansion
\mathbb{R}
%EndExpansion
^{n},\infty }:=\max_{1\leq j\leq n}\left\{ \left\vert a_{j}\right\vert
\right\} ,$ $\left( a_{1},...,a_{n}\right) \in 
%TCIMACRO{\U{211d} }%
%BeginExpansion
\mathbb{R}
%EndExpansion
^{n}.$ Then we define the norm $\left\Vert \cdot \right\Vert _{\mathcal{H}%
_{n}}$ on $\mathcal{H}_{n}$ by $\left\Vert h\right\Vert _{\mathcal{H}%
_{n}}:=\sup \left\{ \left\vert h\left( t\right) \right\vert :t\in
S^{n-1}\right\} .$ Thus, with the pointwise order, we have that $\mathcal{H}%
_{n}$ is a Banach lattice.

\bigskip

For each $n\in 
%TCIMACRO{\U{2115} }%
%BeginExpansion
\mathbb{N}
%EndExpansion
$ and $1\leq j\leq n$ we define the $j$th projection $\pi _{n,j}:%
%TCIMACRO{\U{211d} }%
%BeginExpansion
\mathbb{R}
%EndExpansion
^{n}\rightarrow 
%TCIMACRO{\U{211d} }%
%BeginExpansion
\mathbb{R}
%EndExpansion
$ by $\pi _{n,j}\left( t_{1},...,t_{n}\right) =t_{j}.$ Clearly $\pi
_{n,j}\in \mathcal{H}_{n},j=1,...,n.$

\begin{theorem}
(Krivine's functional calculus) Let $X$ be a Banach lattice, $n\in 
%TCIMACRO{\U{2115} }%
%BeginExpansion
\mathbb{N}
%EndExpansion
$ and $x=(x_{1},...,x_{n})\in X^{n}.$ Then, there exists a unique linear map 
$\tau _{n,x}:\label{krivin}$ $\mathcal{H}_{n}\rightarrow X$ such
that\smallskip \newline
\indent$i)$ ${\large \tau }_{n,x}\left( \pi _{n,j}\right) {\large =x}_{j}$
for $1\leq j\leq n.$ \smallskip \newline
\indent$ii)$ ${\Large \tau }_{n,x}$ is order-preserving i.e.,%
\begin{equation*}
\tau _{n,x}\left( h_{1}\vee h_{2}\right) =\tau _{n,x}\left( h_{1}\right)
\vee \tau _{n,x}\left( h_{2}\right) ,\forall h_{1},h_{2}\in \mathcal{H}_{n}.
\end{equation*}
\end{theorem}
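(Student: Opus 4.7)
The plan is to establish existence through Kakutani's representation theorem for AM-spaces with order unit, and then obtain uniqueness from a Stone--Weierstrass-type density argument on $\mathcal{H}_{n}$.

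For existence, I would restrict attention to the closed principal ideal $X_{e}\subset X$ generated by $e:=|x_{1}|\vee \cdots \vee |x_{n}|$. Since every element of $X_{e}$ is dominated in modulus by a scalar multiple of $e$, the space $X_{e}$ is an AM-space with order unit $e$, so Kakutani's theorem yields an isometric lattice isomorphism $\Phi :X_{e}\rightarrow C(K)$ for some compact Hausdorff $K$, with $\Phi (e)=1_{K}$. Setting $f_{j}:=\Phi (x_{j})\in C(K)$, I would define
\[
\tau _{n,x}(h):=\Phi ^{-1}\bigl(t\mapsto h(f_{1}(t),\ldots ,f_{n}(t))\bigr),\qquad h\in \mathcal{H}_{n}.
\]
Linearity is immediate from linearity of point-evaluation in $h$; property $(i)$ is trivial since $\pi _{n,j}(f_{1}(t),\ldots ,f_{n}(t))=f_{j}(t)$; and property $(ii)$ follows because $\vee $ is pointwise in $C(K)$ and $\Phi ^{-1}$ is a lattice isomorphism.

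For uniqueness, I would first identify $\mathcal{H}_{n}$ with $C(S^{n-1})$ via restriction and homogeneous extension, so that $\Vert \cdot \Vert _{\mathcal{H}_{n}}$ becomes the sup norm. Under this identification, the sublattice of $\mathcal{H}_{n}$ generated by $\pi _{n,1},\ldots ,\pi _{n,n}$ contains $\bigvee _{j}|\pi _{n,j}|=\Vert \cdot \Vert _{\mathbb{R}^{n},\infty }$, which restricts to the constant $1$ on $S^{n-1}$; and the projections separate points of $S^{n-1}$. Hence the lattice version of Stone--Weierstrass yields density of this sublattice in $\mathcal{H}_{n}$. Now any linear map $\sigma :\mathcal{H}_{n}\rightarrow X$ satisfying $(i)$ and $(ii)$ is a lattice homomorphism, hence preserves $|\cdot |$; combined with the estimate $|h|\leq \bigvee _{j}|\pi _{n,j}|$ whenever $\Vert h\Vert _{\mathcal{H}_{n}}\leq 1$, this gives $\Vert \sigma (h)\Vert _{X}\leq \Vert e\Vert _{X}\Vert h\Vert _{\mathcal{H}_{n}}$, so $\sigma $ is continuous. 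Since $\sigma $ and $\tau _{n,x}$ agree on the projections, linearity and preservation of $\vee $ force agreement on the generated sublattice, and continuity extends this equality to all of $\mathcal{H}_{n}$.

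The main obstacle is ensuring that the sublattice generated by the projections is genuinely dense in $\mathcal{H}_{n}$, not merely in $C(\mathbb{R}^{n})$. This is precisely why the identification $\mathcal{H}_{n}\cong C(S^{n-1})$ is essential: it replaces the unbounded domain by a compact one on which Stone--Weierstrass applies, while the positive homogeneity of functions built from projections using $+,\vee ,\wedge $ and scalar multiplication guarantees one stays inside $\mathcal{H}_{n}$ throughout. A secondary technical point is invoking Kakutani's theorem for the ideal $X_{e}$ rather than for $X$ itself, which is harmless because $\tau _{n,x}$ only needs to land in $X$ and its image is automatically contained in $X_{e}$.
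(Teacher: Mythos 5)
The paper does not actually prove this theorem---it quotes it from \cite[Theorem 1.d.1]{Linden}---and your argument is precisely the standard proof given there: Kakutani representation of the principal ideal generated by $e=|x_{1}|\vee\cdots\vee|x_{n}|$ for existence, and the lattice Stone--Weierstrass theorem on $C(S^{n-1})$ together with the bound $\Vert\sigma(h)\Vert_{X}\leq\Vert e\Vert_{X}\Vert h\Vert_{\mathcal{H}_{n}}$ for uniqueness. One small correction: you should work with the principal ideal $\{y\in X:|y|\leq\lambda e\ \text{for some}\ \lambda>0\}$ equipped with the order-unit norm (complete because $X$ is) rather than with its norm closure in $X$, since elements of that closure need not be dominated by a multiple of $e$; with that adjustment the proof is complete.
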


\bigskip

Given $h\in $ $\mathcal{H}_{n}$, Krivine's functional calculus determines
the function $\widetilde{h}:X^{n}\rightarrow X$ given by $\widetilde{h}(x):=$
${\large \tau }_{n,x}\left( h\right) $. When there is no risk of confusion
we will write $h$ instead of $\widetilde{h}.$ We will now analyze the
function $h:X^{n}\rightarrow X.$

\begin{definition}
For $n,m\in 
%TCIMACRO{\U{2115} }%
%BeginExpansion
\mathbb{N}
%EndExpansion
$ we define%
\begin{equation*}
\mathcal{H}_{n}^{m}:=\left\{ G:%
%TCIMACRO{\U{211d} }%
%BeginExpansion
\mathbb{R}
%EndExpansion
^{n}\rightarrow 
%TCIMACRO{\U{211d} }%
%BeginExpansion
\mathbb{R}
%EndExpansion
^{m}\text{ s.t. }g_{j}\in \mathcal{H}_{n},\forall \text{ }1\leq j\leq
m\right\} ,
\end{equation*}%
where $g_{j}=\pi _{m,j}\circ G\mathfrak{.}$
\end{definition}

\bigskip

Given $G=\left( g_{1},...,g_{m}\right) \in \mathcal{H}_{n}^{m}$ we have that%
\begin{equation*}
G\left( x\right) :=\left( g_{1}\left( x\right) ,...,g_{m}\left( x\right)
\right) ,\forall x\in X^{n}.
\end{equation*}%
\indent Note that $\mathcal{H}_{n}^{m}$ is a vector space, on which we
define the norm $\left\Vert \cdot \right\Vert _{\mathcal{H}_{n}^{m}}$ by%
\begin{equation*}
\left\Vert G\right\Vert _{\mathcal{H}_{n}^{m}}:=\max_{1\leq j\leq m}\left\{
\left\Vert g_{j}\right\Vert _{\mathcal{H}_{n}}\right\} ,\forall G\in 
\mathcal{H}_{n}^{m}.
\end{equation*}

Let $G\in \mathcal{H}_{n}^{m}.$ Then for each $h\in \mathcal{H}_{m}$ we can
consider the composition $h\circ G:%
%TCIMACRO{\U{211d} }%
%BeginExpansion
\mathbb{R}
%EndExpansion
^{n}\rightarrow 
%TCIMACRO{\U{211d} }%
%BeginExpansion
\mathbb{R}
%EndExpansion
$. So we define the operator $J_{G}$ by%
\begin{equation*}
J_{G}\left( h\right) :=h\circ G,\forall h\in \mathcal{H}_{m}.
\end{equation*}%
It follows that 
\begin{equation}
J_{G}\left( h\right) \in \mathcal{H}_{n}.  \label{smt2}
\end{equation}%
Furthermore, given $h_{1},h_{2}\in \mathcal{H}_{m}$ and $\lambda \in 
%TCIMACRO{\U{211d} }%
%BeginExpansion
\mathbb{R}
%EndExpansion
$ 
\begin{equation*}
J_{G}\left( \lambda h_{1}+h_{2}\right) =\left( \lambda h_{1}+h_{2}\right)
\circ G=\lambda \left( h_{1}\circ g\right) +h_{2}\circ G=\lambda J_{G}\left(
h_{1}\right) +J_{G}\left( h_{2}\right)
\end{equation*}%
and%
\begin{equation*}
J_{G}\left( h_{1}\vee h_{2}\right) =\left( h_{1}\vee h_{2}\right) \circ
G=\left( h_{1}\circ G\right) \vee \left( h_{2}\circ G\right) =J_{G}\left(
h_{1}\right) \vee J_{G}\left( h_{2}\right) .
\end{equation*}%
\indent Hence, $J_{G}:\mathcal{H}_{m}\rightarrow \mathcal{H}_{n}$ is an
order-preserving linear operator.

\bigskip

The next results are proven in \cite{1er art}.

\begin{lemma}
Let $n,m\in 
%TCIMACRO{\U{2115} }%
%BeginExpansion
\mathbb{N}
%EndExpansion
$ and $G\in \mathcal{H}_{n}^{m}.$ Then, for each $x=\left(
x_{1},...,x_{n}\right) \in X^{n},$\label{lematmm}\newline
\begin{equation}
{\large \tau }_{m,G\left( x\right) }={\large \tau }_{n,x}\circ J_{G}.
\label{3agos}
\end{equation}
\end{lemma}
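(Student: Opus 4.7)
The plan is to exploit the uniqueness statement built into Krivine's functional calculus (Theorem \ref{krivin}). Both sides of \eqref{3agos} are maps from $\mathcal{H}_m$ into $X$, so it suffices to check that the composition $\tau_{n,x}\circ J_G$ fulfills the two defining properties of $\tau_{m,G(x)}$: it is linear and order-preserving, and it sends each projection $\pi_{m,j}$ to the $j$th coordinate of $G(x)$. Uniqueness will then force the equality.

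First I would record that $\tau_{n,x}\circ J_G$ is linear, which is immediate since $J_G:\mathcal{H}_m\to\mathcal{H}_n$ is linear (verified in the text preceding the lemma) and $\tau_{n,x}$ is linear by Krivine. Next, I would observe that it is order-preserving in the sense required by Krivine, i.e.\ it commutes with finite suprema: for $h_1,h_2\in\mathcal{H}_m$,
\begin{equation*}
(\tau_{n,x}\circ J_G)(h_1\vee h_2)=\tau_{n,x}\bigl(J_G(h_1)\vee J_G(h_2)\bigr)=\tau_{n,x}(J_G(h_1))\vee\tau_{n,x}(J_G(h_2)),
\end{equation*}
using that $J_G$ commutes with $\vee$ (noted in the excerpt) and that $\tau_{n,x}$ commutes with $\vee$ by Krivine's theorem.

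Then I would verify agreement on the coordinate projections. For $1\leq j\leq m$, by the definition of $J_G$ one has $J_G(\pi_{m,j})=\pi_{m,j}\circ G=g_j\in\mathcal{H}_n$, and hence $\tau_{n,x}(J_G(\pi_{m,j}))=\tau_{n,x}(g_j)$. By the convention introduced just after Krivine's theorem, $\tau_{n,x}(g_j)$ is exactly what is denoted $g_j(x)$, which is the $j$th coordinate of $G(x)=(g_1(x),\dots,g_m(x))\in X^m$. On the other hand, $\tau_{m,G(x)}(\pi_{m,j})=g_j(x)$ by property $(i)$ of Krivine's theorem applied to the tuple $G(x)$. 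Hence the two operators coincide on every $\pi_{m,j}$.

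The conclusion follows from the uniqueness clause of Theorem \ref{krivin}: any linear, order-preserving map $\mathcal{H}_m\to X$ sending $\pi_{m,j}$ to $g_j(x)$ must equal $\tau_{m,G(x)}$, so $\tau_{m,G(x)}=\tau_{n,x}\circ J_G$. I do not expect a genuine obstacle here; the only place one must be attentive is in not conflating the two meanings of ``$g_j(x)$'' (the scalar-homogeneous function applied to a vector in $X^n$ via Krivine's calculus versus the $j$th coordinate of $G(x)\in X^m$), and the verification above shows they agree by construction.
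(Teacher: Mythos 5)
Your proof is correct: verifying that $\tau_{n,x}\circ J_G$ is linear, commutes with $\vee$, and sends $\pi_{m,j}$ to $g_j(x)=(G(x))_j$, and then invoking the uniqueness clause of Theorem \ref{krivin}, is exactly the right argument, and you handle the one delicate point (the two readings of $g_j(x)$) explicitly. The paper itself does not reprint a proof of Lemma \ref{lematmm} --- it is quoted from \cite{1er art} --- but the uniqueness-based argument you give is the standard one for this identity and there is nothing to add.
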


\begin{lemma}
Let $X$, $W$ be Banach lattices and $T:X\rightarrow W$ an order-preserving
linear operator. Then, for each $n\in 
%TCIMACRO{\U{2115} }%
%BeginExpansion
\mathbb{N}
%EndExpansion
$ and $x_{1},...,x_{n}\in X,\label{lema1802}$%
\begin{equation}
T\circ \tau _{\left( x_{1,}...,x_{n}\right) }=\tau _{\left(
Tx_{1},...,Tx_{n}\right) }  \label{fr1}
\end{equation}
\end{lemma}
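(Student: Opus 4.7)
The plan is to invoke the uniqueness clause in Krivine's functional calculus (Theorem \ref{krivin}). I would define the composite map $S := T \circ \tau_{n,x} : \mathcal{H}_n \to W$ and verify that it satisfies the two characterizing properties of $\tau_{n,Tx}$, where $Tx := (Tx_1, \ldots, Tx_n) \in W^n$; uniqueness then forces $S = \tau_{n,Tx}$, which is exactly the desired identity.

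First, $S$ is linear as a composition of linear maps. For property $(i)$, I compute directly that for $1 \leq j \leq n$,
\begin{equation*}
S(\pi_{n,j}) = T(\tau_{n,x}(\pi_{n,j})) = T(x_j),
\end{equation*}
which matches the requirement that $\tau_{n,Tx}(\pi_{n,j}) = Tx_j$.

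The key step is verifying property $(ii)$, namely that $S$ preserves finite suprema. Here I would use, on the one hand, that $\tau_{n,x}$ already preserves $\vee$ by Theorem \ref{krivin}$(ii)$, and, on the other hand, the convention adopted in this paper that ``order-preserving'' for a linear operator between Banach lattices means preserving $\vee$ (this is the sense in which $\tau_{n,x}$ itself is called order-preserving). With $T$ understood in this way, for any $h_1, h_2 \in \mathcal{H}_n$,
\begin{equation*}
S(h_1 \vee h_2) = T\bigl(\tau_{n,x}(h_1) \vee \tau_{n,x}(h_2)\bigr) = T(\tau_{n,x}(h_1)) \vee T(\tau_{n,x}(h_2)) = S(h_1) \vee S(h_2).
\end{equation*}

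The argument is genuinely just an application of the universal property of $\tau_{n,Tx}$, so I do not expect any serious obstacle; the only point requiring care is making sure that the hypothesis on $T$ is strong enough for the $\vee$-computation above, which is guaranteed by the paper's usage of ``order-preserving.'' Having checked both $(i)$ and $(ii)$ for $S$, the uniqueness in Theorem \ref{krivin} yields $S = \tau_{n,Tx}$, i.e., $T \circ \tau_{(x_1,\ldots,x_n)} = \tau_{(Tx_1,\ldots,Tx_n)}$, as claimed.
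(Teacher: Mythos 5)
Your argument is correct, and it is the natural one: the paper itself does not prove Lemma \ref{lema1802} here (it is quoted from \cite{1er art}), so there is no in-paper proof to compare against, but verifying that $T\circ \tau_{n,x}$ is linear, sends $\pi_{n,j}$ to $Tx_j$, and preserves $\vee$ (using that ``order-preserving'' in this paper means a lattice homomorphism), and then invoking the uniqueness clause of Theorem \ref{krivin}, is exactly what one expects. The only point you could make explicit is that if the uniqueness in Krivine's theorem is understood to require continuity (as in the Lindenstrauss--Tzafriri formulation, since the sublattice generated by the $\pi_{n,j}$ is only dense in $\mathcal{H}_n$), then your composite is indeed continuous, because an order-preserving operator is positive and hence bounded between Banach lattices, and $\tau_{n,x}$ is bounded by (\ref{NUEVAPROP2}); with that remark the proof is complete.
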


The next proposition is established in \cite[Theorem 1.d.1]{Linden}.

\begin{proposition}
Let $X$ be a Banach lattice, $n\in 
%TCIMACRO{\U{2115} }%
%BeginExpansion
\mathbb{N}
%EndExpansion
$ and $x=(x_{1},...,x_{n})\in X^{n}.$ Then, for each $h\in \mathcal{H}_{n},%
\label{propkrivi}$%
\begin{equation}
\tau _{n,x}\left( h\right) \leq \left\Vert h\right\Vert _{\mathcal{H}%
_{n}}\max_{1\leq j\leq n}\left\{ \left\vert x_{j}\right\vert \right\} \in X.
\label{nuevaprop}
\end{equation}%
\indent Therefore%
\begin{equation}
\left\Vert \tau _{n,x}\left( h\right) \right\Vert _{X}\leq \left\Vert
h\right\Vert _{\mathcal{H}_{n}}\left\Vert \max_{1\leq j\leq n}\left\{
\left\vert x_{j}\right\vert \right\} \right\Vert _{X}.  \label{NUEVAPROP2}
\end{equation}
\end{proposition}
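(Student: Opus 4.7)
The plan is to reduce the statement to a pointwise inequality on $\mathbb{R}^{n}$ and then transport it into $X$ via Krivine's functional calculus, using that $\tau_{n,x}$ is a linear lattice homomorphism.

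First I would establish the scalar counterpart in $\mathcal{H}_{n}$. By homogeneity, for any $t \in \mathbb{R}^{n}$ with $t \neq 0$ one has $h(t) = \|t\|_{\mathbb{R}^{n},\infty}\, h(t/\|t\|_{\mathbb{R}^{n},\infty})$, and since $t/\|t\|_{\mathbb{R}^{n},\infty} \in S^{n-1}$, this gives the pointwise bound
\begin{equation*}
h(t) \leq \|h\|_{\mathcal{H}_{n}}\, \|t\|_{\mathbb{R}^{n},\infty}, \qquad t \in \mathbb{R}^{n}.
\end{equation*}
Thus, as elements of the Banach lattice $\mathcal{H}_{n}$ with the pointwise order, $h \leq \|h\|_{\mathcal{H}_{n}}\, \|\cdot\|_{\mathbb{R}^{n},\infty}$.

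Next I would observe that $\tau_{n,x}$ is order-preserving on $\mathcal{H}_{n}$. Indeed, by property $ii)$ of Krivine's functional calculus, $\tau_{n,x}$ is linear and preserves $\vee$; hence $h_{1} \leq h_{2}$ (i.e.\ $h_{1} \vee h_{2} = h_{2}$) implies $\tau_{n,x}(h_{1}) \vee \tau_{n,x}(h_{2}) = \tau_{n,x}(h_{2})$, giving $\tau_{n,x}(h_{1}) \leq \tau_{n,x}(h_{2})$ in $X$. Applying this to the inequality from the previous step yields
\begin{equation*}
\tau_{n,x}(h) \leq \|h\|_{\mathcal{H}_{n}}\, \tau_{n,x}\bigl(\|\cdot\|_{\mathbb{R}^{n},\infty}\bigr).
\end{equation*}
Now I would compute $\tau_{n,x}(\|\cdot\|_{\mathbb{R}^{n},\infty})$ explicitly. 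Writing $\|\cdot\|_{\mathbb{R}^{n},\infty} = \bigvee_{j=1}^{n} |\pi_{n,j}| = \bigvee_{j=1}^{n} (\pi_{n,j} \vee (-\pi_{n,j}))$ and using linearity together with the lattice-homomorphism property of $\tau_{n,x}$, plus $\tau_{n,x}(\pi_{n,j}) = x_{j}$, I get $\tau_{n,x}(\|\cdot\|_{\mathbb{R}^{n},\infty}) = \bigvee_{j=1}^{n} (x_{j} \vee (-x_{j})) = \max_{1 \leq j \leq n}\{|x_{j}|\}$. This establishes (\ref{nuevaprop}).

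Finally, for (\ref{NUEVAPROP2}), I would apply the same inequality to $-h$ (noting $\|-h\|_{\mathcal{H}_{n}} = \|h\|_{\mathcal{H}_{n}}$ and $\tau_{n,x}(-h) = -\tau_{n,x}(h)$), combine with (\ref{nuevaprop}) to conclude $|\tau_{n,x}(h)| \leq \|h\|_{\mathcal{H}_{n}}\,\max_{1 \leq j \leq n}\{|x_{j}|\}$, and then take the $X$-norm, using that $\|\cdot\|_{X}$ is order-preserving on positive elements. The only delicate point is the computation $\tau_{n,x}(\|\cdot\|_{\mathbb{R}^{n},\infty}) = \max_{j}|x_{j}|$, which hinges on the joint use of linearity and the $\vee$-preservation of $\tau_{n,x}$; the rest is essentially formal.
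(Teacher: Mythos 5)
Your proof is correct. The paper itself gives no proof of this proposition --- it is quoted from \cite[Theorem 1.d.1]{Linden} --- and your argument is exactly the standard one behind that reference: the pointwise bound $h(t)\leq \left\Vert h\right\Vert _{\mathcal{H}_{n}}\left\Vert t\right\Vert _{\mathbb{R}^{n},\infty }$ via homogeneity, the order-preservation of $\tau _{n,x}$ deduced from its $\vee $-homomorphism property, and the identity $\tau _{n,x}\left( \left\Vert \cdot \right\Vert _{\mathbb{R}^{n},\infty }\right) =\bigvee_{j=1}^{n}\left\vert x_{j}\right\vert $, followed by applying the same estimate to $-h$ and using that $\left\Vert \cdot \right\Vert _{X}$ is a lattice norm.
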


\section{Krivine's functional calculus for $\left\Vert \cdot \right\Vert _{%
%TCIMACRO{\U{211d} }%
%BeginExpansion
\mathbb{R}
%EndExpansion
^{n},Y}$}

From now on we fix $Y$ to be a Banach sequence lattice and $X,W$ to be
Banach lattices. Notice that, for each $n\in 
%TCIMACRO{\U{2115} }%
%BeginExpansion
\mathbb{N}
%EndExpansion
,$ $\left\Vert \cdot \right\Vert _{%
%TCIMACRO{\U{211d} }%
%BeginExpansion
\mathbb{R}
%EndExpansion
^{n},Y}\in \mathcal{H}_{n}.$ Then, for each $x=\left( x_{1},...,x_{n}\right)
\in X^{n}$ we define $\left\Vert \cdot \right\Vert _{Y}:X^{n}\rightarrow X$
by%
\begin{equation*}
\left\Vert \left( x_{1},...,x_{n}\right) \right\Vert _{Y}:=\tau _{n,x}\left(
\left\Vert \cdot \right\Vert _{%
%TCIMACRO{\U{211d} }%
%BeginExpansion
\mathbb{R}
%EndExpansion
^{n},Y}\right) .
\end{equation*}%
In this section we analyze some properties of the above operator and use it
to construct an order-preserving norm on $X^{n}$. Proofs for the following
results can be consulted in \cite{1er art}.

\bigskip

\begin{lemma}
Let $X$, $W$ be Banach lattices\smallskip .\label{lemalema}\newline
\indent$i)$ If $n\in 
%TCIMACRO{\U{2115} }%
%BeginExpansion
\mathbb{N}
%EndExpansion
,$ $\lambda \in 
%TCIMACRO{\U{211d} }%
%BeginExpansion
\mathbb{R}
%EndExpansion
$ and $x=\left( x_{1},...,x_{n}\right) \in X^{n},$ then%
\begin{equation*}
\left\Vert \left( \lambda x_{1},...,\lambda x_{n}\right) \right\Vert
_{Y}=\left\vert \lambda \right\vert \left\Vert \left( x_{1},...,x_{n}\right)
\right\Vert _{Y}.
\end{equation*}%
\indent$ii)$ If $x=\left( x_{1},...,x_{n}\right) ,y=\left(
y_{1},...,y_{n}\right) \in X^{n}$ and $\left\vert x_{j}\right\vert \leq
\left\vert y_{j}\right\vert ,$ $1\leq j\leq n,$ then\label{propo2305}%
\begin{equation*}
\left\Vert \left( x_{1},...,x_{n}\right) \right\Vert _{Y}\leq \left\Vert
\left( y_{1},...,y_{n}\right) \right\Vert _{Y}\in X
\end{equation*}%
and as a consequence%
\begin{equation*}
\left\Vert \left( x_{1},...,x_{n}\right) \right\Vert _{Y}=\left\Vert \left(
\left\vert x_{1}\right\vert ,...,\left\vert x_{n}\right\vert \right)
\right\Vert _{Y}.
\end{equation*}%
\indent$iii)$ If $x=\left( x_{1},...,x_{n}\right) $ and $y=\left(
y_{1},...,y_{n}\right) \in X^{n}$, then\label{lema23052}%
\begin{equation*}
\left\Vert \left( x_{1}+y_{1},...,x_{n}+y_{n}\right) \right\Vert _{Y}\leq
\left\Vert \left( x_{1},...,x_{n}\right) \right\Vert _{Y}+\left\Vert \left(
y_{1},...,y_{n}\right) \right\Vert _{Y}\in X.
\end{equation*}%
\indent Therefore%
\begin{equation}
\left\Vert \left\Vert \left( x_{1}+y_{1},...,x_{n}+y_{n}\right) \right\Vert
_{Y}\right\Vert _{X}\leq \left\Vert \left\Vert \left( x_{1},...,x_{n}\right)
\right\Vert _{Y}\right\Vert _{X}+\left\Vert \left\Vert \left(
y_{1},...,y_{n}\right) \right\Vert _{Y}\right\Vert _{X}.  \label{835}
\end{equation}%
\indent$iv)$ If $x=\left( x_{1},...,x_{n}\right) \in X^{n},$ then\label%
{lema2.51vi} 
\begin{equation*}
\left\Vert \left( x_{1},...,x_{n}\right) \right\Vert _{Y}\leq \left\Vert
e_{1}+...+e_{n}\right\Vert _{%
%TCIMACRO{\U{211d} }%
%BeginExpansion
\mathbb{R}
%EndExpansion
^{n},Y}\max_{1\leq j\leq n}\left\{ \left\vert x_{j}\right\vert \right\} \in X
\end{equation*}%
and as a consequence%
\begin{equation*}
\left\Vert \left\Vert \left( x_{1},...,x_{n}\right) \right\Vert
_{Y}\right\Vert _{X}\leq \left\Vert e_{1}+...+e_{n}\right\Vert _{%
%TCIMACRO{\U{211d} }%
%BeginExpansion
\mathbb{R}
%EndExpansion
^{n},Y}\left\Vert \max_{1\leq j\leq n}\left\{ \left\vert x_{j}\right\vert
\right\} \right\Vert _{X}.
\end{equation*}%
\newline
\indent$v)$ Let $T:X\rightarrow W$ be an order-preserving linear operator.
Then%
\begin{equation*}
T\left( \left\Vert \left( x_{1},...,x_{n}\right) \right\Vert _{Y}\right)
=\left\Vert \left( Tx_{1},...,Tx_{n}\right) \right\Vert _{Y},n\in 
%TCIMACRO{\U{2115} }%
%BeginExpansion
\mathbb{N}
%EndExpansion
,x_{1},...,x_{n}\in X
\end{equation*}
\end{lemma}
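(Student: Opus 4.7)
My plan is to treat parts (i)--(v) uniformly: each reflects a pointwise algebraic or order-theoretic property of the norm $\|\cdot\|_{\mathbb{R}^{n},Y}$ or of its composition with a map in $\mathcal{H}_{m}^{n}$, and Krivine's functional calculus transfers such pointwise facts to $X$-valued facts through the linear Riesz homomorphism $\tau_{m,x}$. The workhorse is Lemma~\ref{lematmm}: whenever I can exhibit $G\in\mathcal{H}_{m}^{n}$ with $G(x)=z$, I may rewrite $\|(z_{1},\ldots,z_{n})\|_{Y}=\tau_{m,x}\bigl(J_{G}(\|\cdot\|_{\mathbb{R}^{n},Y})\bigr)$ and invoke linearity and monotonicity of $\tau_{m,x}$ together with the corresponding pointwise inequality on $\mathbb{R}^{m}$.

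First I would dispatch the easy parts. For (i), I apply this scheme with $G(t)=(\lambda t_{1},\ldots,\lambda t_{n})$: the degree-one homogeneity of the $\mathbb{R}^{n}$-norm gives $J_{G}(\|\cdot\|_{\mathbb{R}^{n},Y})=|\lambda|\,\|\cdot\|_{\mathbb{R}^{n},Y}$, and linearity of $\tau_{n,x}$ closes the argument. The equality $\|(x_{1},\ldots,x_{n})\|_{Y}=\|(|x_{1}|,\ldots,|x_{n}|)\|_{Y}$ in (ii) is analogous, taking $G(t)=(|t_{1}|,\ldots,|t_{n}|)$ and using that $\|\cdot\|_{\mathbb{R}^{n},Y}$ depends only on absolute values. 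Part (v) is immediate from Lemma~\ref{lema1802}, which rewrites $T(\tau_{n,x}(\|\cdot\|_{\mathbb{R}^{n},Y}))$ as $\tau_{n,(Tx_{1},\ldots,Tx_{n})}(\|\cdot\|_{\mathbb{R}^{n},Y})$. Part (iv) is Proposition~\ref{propkrivi} applied to $h=\|\cdot\|_{\mathbb{R}^{n},Y}$, once I observe that $\|\,\|\cdot\|_{\mathbb{R}^{n},Y}\,\|_{\mathcal{H}_{n}}=\|e_{1}+\cdots+e_{n}\|_{\mathbb{R}^{n},Y}$ by monotonicity of the sequence-lattice norm over the sup-norm unit ball.

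For (iii) I would use the addition map $G(t,s)=(t_{1}+s_{1},\ldots,t_{n}+s_{n})\in\mathcal{H}_{2n}^{n}$ applied to $(x,y)\in X^{2n}$. The pointwise triangle inequality on $\mathbb{R}^{n}$ gives $J_{G}(\|\cdot\|_{\mathbb{R}^{n},Y})(t,s)\le\|t\|_{\mathbb{R}^{n},Y}+\|s\|_{\mathbb{R}^{n},Y}$; applying the monotone, linear $\tau_{2n,(x,y)}$, together with two further uses of Lemma~\ref{lematmm} for the coordinate projections onto the first and last $n$ entries, yields the $X$-valued inequality, and taking $X$-norms followed by the triangle inequality in $X$ produces (\ref{835}).

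The main obstacle is the inequality in (ii): the pointwise bound $\|t\|_{\mathbb{R}^{n},Y}\le\|s\|_{\mathbb{R}^{n},Y}$ holds only on $\{(t,s):|t_{j}|\le|s_{j}|\}\subset\mathbb{R}^{2n}$, not globally, so monotonicity of $\tau_{2n,(x,y)}$ cannot be applied directly to the two natural projections. My workaround is first to reduce, via the equality just proved, to the case $0\le x_{j}\le y_{j}$, and then to introduce the clipping map
\[
G(t,s):=\bigl((t_{1}\wedge s_{1})\vee 0,\ldots,(t_{n}\wedge s_{n})\vee 0\bigr)\in\mathcal{H}_{2n}^{n}.
\]
For $0\le x_j\le y_j$ one has $G(x,y)=(x_{1},\ldots,x_{n})$ (since $\tau_{2n,(x,y)}$ is a Riesz homomorphism), so Lemma~\ref{lematmm} identifies $\|(x_{1},\ldots,x_{n})\|_{Y}$ with $\tau_{2n,(x,y)}(J_{G}(\|\cdot\|_{\mathbb{R}^{n},Y}))$. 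On the other hand, every coordinate of $G(t,s)$ lies in $[0,|s_{j}|]$ for \emph{all} $(t,s)\in\mathbb{R}^{2n}$, so $J_{G}(\|\cdot\|_{\mathbb{R}^{n},Y})(t,s)\le\|(s_{1},\ldots,s_{n})\|_{\mathbb{R}^{n},Y}$ pointwise. Applying monotonicity of $\tau_{2n,(x,y)}$ and then Lemma~\ref{lematmm} to the projection onto the last $n$ coordinates delivers $\|(x_{1},\ldots,x_{n})\|_{Y}\le\|(y_{1},\ldots,y_{n})\|_{Y}$, completing (ii).
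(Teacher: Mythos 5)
Your proposal is correct. Note that the paper itself gives no proof of this lemma --- it states only that ``Proofs for the following results can be consulted in \cite{1er art}'' --- so there is no internal argument to compare against; what you have written is the standard functional-calculus proof that such a reference would contain. Parts (i), (iii), (iv) and (v) are routine consequences of linearity and lattice-homomorphy of $\tau_{n,x}$, of Lemma \ref{lematmm}, Lemma \ref{lema1802} and Proposition \ref{propkrivi}, exactly as you describe (in (i) what you use is absolute homogeneity of the norm $\left\Vert \cdot \right\Vert _{\mathbb{R}^{n},Y}$, not merely degree-one homogeneity, but the fact is of course true). The only step that genuinely requires an idea is the order inequality in (ii), and you have correctly identified why the naive argument fails: the pointwise bound $\left\Vert t\right\Vert _{\mathbb{R}^{n},Y}\leq \left\Vert s\right\Vert _{\mathbb{R}^{n},Y}$ is not a global inequality in $\mathcal{H}_{2n}$, so monotonicity of $\tau _{2n,\left( x,y\right) }$ cannot be applied to the two projections directly. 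Your fix --- reduce to $0\leq x_{j}\leq y_{j}$ via the modulus identity and then use the clipping map $G\left( t,s\right) =\left( \left( t_{j}\wedge s_{j}\right) \vee 0\right) _{j}$, which satisfies $G\left( x,y\right) =x$ on the relevant orthant while obeying the global bound $0\leq G\left( t,s\right) _{j}\leq \left\vert s_{j}\right\vert $ --- is sound: $\tau _{2n,\left( x,y\right) }$ preserves $\vee$ and hence $\wedge$ and order, so Lemma \ref{lematmm} applied to $G$ and to the projection onto the last $n$ coordinates yields $\left\Vert \left( x_{1},...,x_{n}\right) \right\Vert _{Y}\leq \left\Vert \left( y_{1},...,y_{n}\right) \right\Vert _{Y}$ as claimed. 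No gaps.
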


\bigskip

Given a Banach space $E$ and $n\in 
%TCIMACRO{\U{2115} }%
%BeginExpansion
\mathbb{N}
%EndExpansion
$, we will write $\left\Vert \cdot \right\Vert _{E^{n},Y}$ to denote the
norm on $E^{n}$ induced naturally by $\left\Vert \cdot \right\Vert _{%
%TCIMACRO{\U{211d} }%
%BeginExpansion
\mathbb{R}
%EndExpansion
^{n},Y}$. That is,\newline
\begin{equation}
\left\Vert w\right\Vert _{E^{n},Y}:=\left\Vert \left( \left\Vert
w_{1}\right\Vert _{E},...,\left\Vert w_{n}\right\Vert _{E}\right)
\right\Vert _{%
%TCIMACRO{\U{211d} }%
%BeginExpansion
\mathbb{R}
%EndExpansion
^{n},Y},\forall w=\left( w_{1},...,w_{n}\right) \in E^{n}.
\label{norma_X^n_Y}
\end{equation}

Note that the norms $\left\Vert \cdot \right\Vert _{E^{n},1}$ and $%
\left\Vert \cdot \right\Vert _{E^{n},Y}$ are equivalent since they are
defined by the equivalent norms $\left\Vert \cdot \right\Vert _{%
%TCIMACRO{\U{211d} }%
%BeginExpansion
\mathbb{R}
%EndExpansion
^{n},1}$ and $\left\Vert \cdot \right\Vert _{%
%TCIMACRO{\U{211d} }%
%BeginExpansion
\mathbb{R}
%EndExpansion
^{n},Y}$ respectively. Then, as $\left( E^{n},\left\Vert \cdot \right\Vert
_{E^{n},1}\right) $ is complete it follows that $\left( E^{n},\left\Vert
\cdot \right\Vert _{E^{n},Y}\right) $ is a Banach space. Now, instead of a
Banach space $E$ consider a Banach lattice $X.$ Then $X^{n}$ is a Banach
lattice with the order by components. Furthermore, for every $x=\left(
x_{1},...,x_{n}\right) $, $z=\left( z_{1},...,z_{n}\right) \in X^{n}$ such
that $x\leq z,$%
\begin{equation*}
\left\Vert x\right\Vert _{X^{n},Y}=\left\Vert \left( \left\Vert
x_{1}\right\Vert _{X},...,\left\Vert x_{n}\right\Vert _{X}\right)
\right\Vert _{Y}\leq \left\Vert \left( \left\Vert z_{1}\right\Vert
_{X},...,\left\Vert z_{n}\right\Vert _{X}\right) \right\Vert _{Y}=\left\Vert
z\right\Vert _{X^{n},Y}.
\end{equation*}%
Hence, in this case $\left\Vert \cdot \right\Vert _{X^{n},Y}$ is an
order-preserving norm and consequently\linebreak\ $\left( X^{n},\left\Vert
\cdot \right\Vert _{X^{n},Y}\right) $ is a Banach lattice.

\bigskip

Next, making use of Lemma \ref{lemalema}, we define another norm on the
lattice $X^{n}$. This will be useful to prove the completeness of the spaces
of $Y$-convex and $Y$-concave operators, which we will consider later. The
relation of this norm\ with $\left\Vert \cdot \right\Vert _{X^{n},Y}$ will
be analyzed in the next section.\newline

\begin{definition}
For each $n\in 
%TCIMACRO{\U{2115} }%
%BeginExpansion
\mathbb{N}
%EndExpansion
$ we define the function $\left\Vert \cdot \right\Vert _{X^{n},Y,\tau
}:X^{n}\rightarrow 
%TCIMACRO{\U{211d} }%
%BeginExpansion
\mathbb{R}
%EndExpansion
$ by%
\begin{equation*}
\left\Vert x\right\Vert _{X^{n},Y,\tau }:=\left\Vert \left\Vert \left(
x_{1},...,x_{n}\right) \right\Vert _{Y}\right\Vert _{X},\forall x=\left(
x_{1},...,x_{n}\right) \in X^{n}.
\end{equation*}
\end{definition}

Observe that, by Lemma \ref{lemalema}, $\left\Vert \cdot \right\Vert
_{X^{n},Y,\tau }$ is a lattice norm.

\begin{proposition}
The norms $\left\Vert \cdot \right\Vert _{X^{n},1}$ and $\left\Vert \cdot
\right\Vert _{X^{n},Y,\tau }$ are equivalent and consequently $\left(
X^{n},\left\Vert \cdot \right\Vert _{X^{n},Y,\tau }\right) $ is a Banach
lattice.\label{propoca}
\end{proposition}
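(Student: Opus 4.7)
The plan is to establish a two-sided inequality between $\|\cdot\|_{X^{n},1}$ and $\|\cdot\|_{X^{n},Y,\tau}$; once this is in hand, the fact that $(X^{n},\|\cdot\|_{X^{n},1})$ is complete forces completeness of the other norm, and since $\|\cdot\|_{X^{n},Y,\tau}$ has already been noted to be a lattice norm, we obtain a Banach lattice.

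For the upper bound $\|x\|_{X^{n},Y,\tau}\leq C\|x\|_{X^{n},1}$ I would invoke Lemma \ref{lemalema}(iv) directly: it yields
\begin{equation*}
\|x\|_{X^{n},Y,\tau}=\bigl\|\,\|(x_{1},\dots,x_{n})\|_{Y}\,\bigr\|_{X}\leq \|e_{1}+\cdots+e_{n}\|_{\mathbb{R}^{n},Y}\,\bigl\|\max_{1\leq j\leq n}|x_{j}|\bigr\|_{X}.
\end{equation*}
The pointwise estimate $\max_{j}|x_{j}|\leq \sum_{j}|x_{j}|$ together with the lattice property of $\|\cdot\|_{X}$ then gives $\|\max_{j}|x_{j}|\|_{X}\leq \sum_{j}\|x_{j}\|_{X}=\|x\|_{X^{n},1}$, and the constant $\|e_{1}+\cdots+e_{n}\|_{\mathbb{R}^{n},Y}$ is finite because each $e_{j}\in Y$.

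The reverse bound is the more delicate direction. The key observation is that for every $t=(t_{1},\dots,t_{n})\in \mathbb{R}^{n}$ we have $|t_{j}|\,\|e_{j}\|_{\mathbb{R}^{n},Y}=\|t_{j}e_{j}\|_{\mathbb{R}^{n},Y}\leq \|t\|_{\mathbb{R}^{n},Y}$, since $|t_{j}e_{j}|\leq |t|$ coordinatewise and $\|\cdot\|_{Y}$ is order-preserving. Interpreting this as a pointwise inequality between members of $\mathcal{H}_{n}$,
\begin{equation*}
|\pi _{n,j}|\leq \|e_{j}\|_{\mathbb{R}^{n},Y}^{-1}\,\|\cdot\|_{\mathbb{R}^{n},Y}.
\end{equation*}
Krivine's functional calculus tells us $\tau _{n,x}$ is order-preserving in its functional argument (and $|x_{j}|=\tau _{n,x}(|\pi _{n,j}|)$ by the lattice-homomorphism property $\tau _{n,x}(h_{1}\vee h_{2})=\tau _{n,x}(h_{1})\vee \tau _{n,x}(h_{2})$). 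Applying $\tau _{n,x}$ therefore yields $|x_{j}|\leq \|e_{j}\|_{\mathbb{R}^{n},Y}^{-1}\|(x_{1},\dots,x_{n})\|_{Y}$ in $X$, so that $\|x_{j}\|_{X}\leq \|e_{j}\|_{\mathbb{R}^{n},Y}^{-1}\|x\|_{X^{n},Y,\tau}$. Summing over $j$ gives
\begin{equation*}
\|x\|_{X^{n},1}\leq \Bigl(\sum_{j=1}^{n}\|e_{j}\|_{\mathbb{R}^{n},Y}^{-1}\Bigr)\,\|x\|_{X^{n},Y,\tau}.
\end{equation*}

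Combining both inequalities yields equivalence of the two norms on $X^{n}$. The completeness of $(X^{n},\|\cdot\|_{X^{n},1})$ is standard, and transfers to $(X^{n},\|\cdot\|_{X^{n},Y,\tau})$ through the equivalence; together with the lattice property of $\|\cdot\|_{X^{n},Y,\tau}$ already noted from Lemma \ref{lemalema}(i)--(iii), this makes $(X^{n},\|\cdot\|_{X^{n},Y,\tau})$ a Banach lattice. The main technical point to get right is the passage from the scalar inequality $|\pi _{n,j}|\leq \|e_{j}\|_{\mathbb{R}^{n},Y}^{-1}\|\cdot\|_{\mathbb{R}^{n},Y}$ to its vector counterpart inside $X$, which hinges on the order-preservation of $\tau _{n,x}$ delivered by Krivine's construction.
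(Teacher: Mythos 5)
Your argument is correct, and both directions check out: the upper bound is exactly Lemma \ref{lemalema}(iv) combined with $\max_{j}|x_{j}|\leq\sum_{j}|x_{j}|$, and the lower bound correctly passes the scalar inequality $|\pi_{n,j}|\leq\Vert e_{j}\Vert_{\mathbb{R}^{n},Y}^{-1}\Vert\cdot\Vert_{\mathbb{R}^{n},Y}$ through $\tau_{n,x}$, using that a lattice homomorphism is monotone and satisfies $\tau_{n,x}(|\pi_{n,j}|)=|x_{j}|$; the constants $\Vert e_{j}\Vert_{\mathbb{R}^{n},Y}$ are positive and finite because $Y$ is a Banach sequence lattice containing every $e_{j}$. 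Note that the paper itself does not prove this proposition, deferring instead to the companion preprint, so there is no in-text argument to compare against; your proof is the natural self-contained one and fills that gap correctly.
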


\bigskip

The following properties are straightforward implications of Lemma \ref%
{lemalema} and the above proposition.

\bigskip

Let $X$ be a Banach lattice and $\left\{ x_{m}\right\} _{m=1}^{\infty
}\subset X^{n}.$ Then$\smallskip $\newline
\indent$i)$ The sequence $\left\{ x_{m}\right\} _{m=1}^{\infty }$ is a
Cauchy sequence in $\left( X^{n},\left\Vert \cdot \right\Vert
_{X^{n},Y}\right) $ if and only if it is a Cauchy sequence in $X$ in each
one of its components.$\smallskip $\newline
\indent$ii)$ The sequence $\left\{ x_{m}\right\} _{m=1}^{\infty }$ is
convergent in $\left( X^{n},\left\Vert \cdot \right\Vert _{X^{n},Y}\right) $
if and only if it is convergent in $X$ in each one of its components. In
this case for $x_{m}:=\left( x_{m,1},...,x_{m,n}\right) ,m\in 
%TCIMACRO{\U{2115} }%
%BeginExpansion
\mathbb{N}
%EndExpansion
$ we have that%
\begin{equation*}
\lim_{m\rightarrow \infty }x_{m}=\left( \lim_{m\rightarrow \infty
}x_{m,1},...,\lim_{m\rightarrow \infty }x_{m,n}\right)
\end{equation*}%
and%
\begin{equation*}
\left\Vert \lim_{m\rightarrow \infty }x_{m}\right\Vert _{X^{n},Y}=\left\Vert
\left( \lim_{m\rightarrow \infty }\left\Vert x_{m,1}\right\Vert
_{X},...,\lim_{m\rightarrow \infty }\left\Vert x_{m,n}\right\Vert
_{X}\right) \right\Vert _{Y}.
\end{equation*}%
\indent$iii)$ For each $n\in 
%TCIMACRO{\U{2115} }%
%BeginExpansion
\mathbb{N}
%EndExpansion
,$%
\begin{equation*}
\left\Vert \left( x_{1},...,x_{n},0\right) \right\Vert
_{X^{n+1},Y}=\left\Vert \left( x_{1},...,x_{n}\right) \right\Vert _{X^{n},Y},
\end{equation*}

\bigskip

\begin{equation*}
\left\Vert \left( x_{1},...,x_{n},0\right) \right\Vert _{X^{n+1},Y,\tau
}=\left\Vert \left( x_{1},...,x_{n}\right) \right\Vert _{X^{n},Y,\tau },
\end{equation*}

\bigskip

\begin{equation*}
\left\Vert \left( x_{1},...,x_{n}\right) \right\Vert _{X^{n},Y}\leq
\left\Vert \left( x_{1},...,x_{n},x_{n+1}\right) \right\Vert _{X^{n+1},Y},
\end{equation*}%
and\newline

\begin{equation*}
\left\Vert \left( x_{1},...,x_{n}\right) \right\Vert _{X^{n},Y,\tau }\leq
\left\Vert \left( x_{1},...,x_{n},x_{n+1}\right) \right\Vert
_{X^{n+1},Y,\tau }.
\end{equation*}

\bigskip

Next we will present a characterization of $\tau _{n,x}\left( \left\Vert
\cdot \right\Vert _{%
%TCIMACRO{\U{211d} }%
%BeginExpansion
\mathbb{R}
%EndExpansion
^{n},Y}\right) .$ In order to do this, we will identify the dual space $%
\left( 
%TCIMACRO{\U{211d} }%
%BeginExpansion
\mathbb{R}
%EndExpansion
^{n},\left\Vert \cdot \right\Vert _{Y}\right) ^{\ast }$ with $\left( 
%TCIMACRO{\U{211d} }%
%BeginExpansion
\mathbb{R}
%EndExpansion
^{n},\left\Vert \cdot \right\Vert _{Y^{\ast }}\right) .$ That is, we will
identify each functional $\varphi \in \left( 
%TCIMACRO{\U{211d} }%
%BeginExpansion
\mathbb{R}
%EndExpansion
^{n},\left\Vert \cdot \right\Vert _{Y}\right) ^{\ast }$ with the unique
vector $\left( a_{1},...,a_{n}\right) \in 
%TCIMACRO{\U{211d} }%
%BeginExpansion
\mathbb{R}
%EndExpansion
^{n}$ such that%
\begin{equation*}
\left\langle \left( t_{1},...,t_{n}\right) ,\varphi \right\rangle
=\sum_{j=1}^{n}a_{j}t_{j},\forall \left( t_{1},...,t_{n}\right) \in 
%TCIMACRO{\U{211d} }%
%BeginExpansion
\mathbb{R}
%EndExpansion
^{n},
\end{equation*}%
and%
\begin{equation*}
\left\Vert \left( a_{1},...,a_{n}\right) \right\Vert _{Y^{\ast }}:=\sup
\left\{ \left\vert \sum_{j=1}^{n}a_{j}t_{j}\right\vert :\left\Vert \left(
t_{1},...,t_{n}\right) \right\Vert _{%
%TCIMACRO{\U{211d} }%
%BeginExpansion
\mathbb{R}
%EndExpansion
^{n},Y}\leq 1\right\} .
\end{equation*}%
\newline
\indent Thus, for each $t_{1},...,t_{n}\in 
%TCIMACRO{\U{211d} }%
%BeginExpansion
\mathbb{R}
%EndExpansion
^{n}$ we have that%
\begin{eqnarray}
\left\Vert \left( t_{1},...,t_{n}\right) \right\Vert _{Y} &=&\sup \left\{
\left\vert \left\langle \left( t_{1},...,t_{n}\right) ,\varphi \right\rangle
\right\vert :\left\Vert \varphi \right\Vert _{\left( 
%TCIMACRO{\U{211d} }%
%BeginExpansion
\mathbb{R}
%EndExpansion
^{n},\left\Vert \cdot \right\Vert _{Y}\right) ^{\ast }}\leq 1\right\}  \notag
\\
&=&\sup \left\{ \left\vert \sum_{j=1}^{n}a_{j}t_{j}\right\vert :\left\Vert
\left( a_{1},...,a_{n}\right) \right\Vert _{%
%TCIMACRO{\U{211d} }%
%BeginExpansion
\mathbb{R}
%EndExpansion
^{n},Y^{\ast }}\leq 1\right\} .  \label{7nov}
\end{eqnarray}

\begin{lemma}
Let $K$ be a compact space, $\mathcal{C}\subset C\left( K\right) $ a non
empty family and $g\in C\left( K\right) .$ If $g=\sup \mathcal{C}$, then,
there exists a sequence $\left\{ g_{n}\right\} _{n=1}^{\infty }\subset
C\left( K\right) $ such that 
\begin{equation}
\left\Vert f-g_{n}\right\Vert _{C\left( K\right) }\rightarrow 0  \label{jui}
\end{equation}%
and each $g_{n}$ is a maximum of functions on \label{2555}$\mathcal{C}.$
\end{lemma}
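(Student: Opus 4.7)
The plan is to prove this by a standard Dini-type compactness argument. Interpret the hypothesis $g=\sup\mathcal{C}$ as the pointwise supremum, so in particular $c\le g$ on $K$ for every $c\in\mathcal{C}$. The goal is then to produce, for each $\varepsilon>0$, a function $h_\varepsilon$ that is the maximum of finitely many members of $\mathcal{C}$ and satisfies $g-\varepsilon\le h_\varepsilon\le g$; taking $\varepsilon=1/n$ yields the desired sequence $\{g_n\}$.

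Fix $\varepsilon>0$. For each $x\in K$, since $g(x)=\sup\{c(x):c\in\mathcal{C}\}$, I can choose $c_x\in\mathcal{C}$ with $c_x(x)>g(x)-\varepsilon/2$. Both $c_x$ and $g$ are continuous at $x$, so there is an open neighborhood $U_x$ of $x$ on which simultaneously $c_x(y)>c_x(x)-\varepsilon/4$ and $g(y)<g(x)+\varepsilon/4$. Combining these two estimates gives
\begin{equation*}
c_x(y)>c_x(x)-\tfrac{\varepsilon}{4}>g(x)-\tfrac{3\varepsilon}{4}>g(y)-\varepsilon,\qquad y\in U_x.
\end{equation*}

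Now invoke the compactness of $K$: the open cover $\{U_x\}_{x\in K}$ admits a finite subcover $U_{x_1},\dots,U_{x_k}$. Define
\begin{equation*}
h_\varepsilon:=c_{x_1}\vee c_{x_2}\vee\dots\vee c_{x_k}\in C(K).
\end{equation*}
For any $y\in K$ there is some $i$ with $y\in U_{x_i}$, hence $h_\varepsilon(y)\ge c_{x_i}(y)>g(y)-\varepsilon$, while $h_\varepsilon\le g$ pointwise since each $c_{x_i}\le g$. Therefore $\|g-h_\varepsilon\|_{C(K)}\le \varepsilon$. Setting $g_n:=h_{1/n}$ produces the required sequence of finite maxima from $\mathcal{C}$ converging uniformly to $g$.

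The only point requiring care is the neighborhood construction in the second paragraph; everything else is straightforward. This is exactly the place where one uses that not only each $c_x$ is continuous but also $g$ itself is continuous, which is essential because without the continuity of $g$ the estimate $g(y)<g(x)+\varepsilon/4$ would fail and the pointwise lower bound $c_x(y)>g(y)-\varepsilon$ could not be obtained on a neighborhood. Once that delicate inequality is in place, compactness does the rest of the work immediately.
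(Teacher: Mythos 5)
Your proof is correct and is the standard Dini-type covering argument; the paper itself defers the proof of this lemma to the cited preprint \cite{1er art}, and your chain of inequalities $c_x(y)>c_x(x)-\varepsilon/4>g(x)-3\varepsilon/4>g(y)-\varepsilon$ together with the finite subcover and the bound $h_\varepsilon\le g$ is exactly the intended argument (note the statement's $\left\Vert f-g_n\right\Vert$ is a typo for $\left\Vert g-g_n\right\Vert$, which you have silently corrected). Your explicit reading of $g=\sup\mathcal{C}$ as the \emph{pointwise} supremum is also the only tenable one: under the order-theoretic supremum in the lattice $C(K)$ the claim would be false (for $f_n(x)=\min(nx,1)$ on $[0,1]$ the lattice supremum is the constant $1$, yet every finite maximum is some $f_N$ with $\left\Vert 1-f_N\right\Vert_{C[0,1]}=1$), and it is precisely the continuity of $g$ under the pointwise reading that makes your neighborhood estimate work.
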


\bigskip

The next result extends that of the case $Y=\ell ^{p},1\leq p\leq \infty $ 
\cite[p. 42]{Linden}$.$

\begin{proposition}
For each $n\in 
%TCIMACRO{\U{2115} }%
%BeginExpansion
\mathbb{N}
%EndExpansion
$ and $x=\left( x_{1},...,x_{n}\right) \in X^{n},\label{lema69}\qquad $%
\begin{equation}
\left\Vert \left( x_{1},...,x_{n}\right) \right\Vert _{Y}=\sup \left\{
\sum_{j=1}^{n}a_{j}x_{j}:\left\Vert \left( a_{1},...,a_{n}\right)
\right\Vert _{Y^{\ast }}\leq 1\right\} .  \label{muero3}
\end{equation}
\end{proposition}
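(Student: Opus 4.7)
The approach is to write the norm $h := \|\cdot\|_{\mathbb{R}^{n},Y}$ as an order-theoretic supremum in the Banach lattice $\mathcal{H}_{n}$ of a family of linear forms on $\mathbb{R}^{n}$, to approximate $h$ in $\mathcal{H}_{n}$-norm by finite maxima of such forms via Lemma \ref{2555}, and then to transfer the identity into $X$ through $\tau_{n,x}$, exploiting that it is linear, preserves finite suprema (property (ii) of Krivine's calculus) and is norm-continuous by Proposition \ref{propkrivi}.

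First, starting from (\ref{7nov}) and the symmetry $a\in B_{Y^{\ast }}\Leftrightarrow -a\in B_{Y^{\ast }}$, I would drop the absolute value and obtain the pointwise identity
\[
h(t_{1},\ldots ,t_{n})=\sup \{\ell_{a}(t_{1},\ldots ,t_{n}):\|(a_{1},\ldots ,a_{n})\|_{Y^{\ast }}\leq 1\},\qquad \ell_{a}(t):=\sum_{j=1}^{n}a_{j}t_{j},
\]
on $\mathbb{R}^{n}$. Restricting to the compact sphere $K:=S^{n-1}$, this says $h=\sup \mathcal{C}$ in $C(K)$, where $\mathcal{C}=\{\ell_{a}|_{K}:a\in B_{Y^{\ast }}\}$. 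Lemma \ref{2555} then furnishes finite maxima $g_{m}=\bigvee _{k=1}^{N_{m}}\ell_{a^{(m,k)}}$ with $\|h-g_{m}\|_{\mathcal{H}_{n}}\to 0$. Since each $\ell_{a}$ is a linear combination of the projections $\pi _{n,j}$ and $\tau _{n,x}(\pi _{n,j})=x_{j}$, linearity of $\tau _{n,x}$ together with property (ii) of Krivine yields
\[
\tau _{n,x}(g_{m})=\bigvee _{k=1}^{N_{m}}\sum_{j=1}^{n}a_{j}^{(m,k)}x_{j},
\]
and norm-continuity (Proposition \ref{propkrivi}) gives $\tau _{n,x}(g_{m})\to \tau _{n,x}(h)=\|(x_{1},\ldots ,x_{n})\|_{Y}$ in $X$.

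Finally, I would verify the two inequalities defining the supremum in (\ref{muero3}). For the upper-bound direction, $\ell_{a}\leq h$ pointwise on $\mathbb{R}^{n}$ and $\tau _{n,x}$ is order-preserving, so $\sum_{j}a_{j}x_{j}=\tau _{n,x}(\ell_{a})\leq \|(x_{1},\ldots ,x_{n})\|_{Y}$ for every $a\in B_{Y^{\ast }}$. For the least-upper-bound direction, given any $z\in X$ majorising $\{\sum_{j}a_{j}x_{j}:a\in B_{Y^{\ast }}\}$, each $\tau _{n,x}(g_{m})$ is a finite join of elements of that family and hence $\leq z$; since $X^{+}$ is norm-closed and $\tau _{n,x}(g_{m})$ converges in norm to $\|(x_{1},\ldots ,x_{n})\|_{Y}$, we conclude $\|(x_{1},\ldots ,x_{n})\|_{Y}\leq z$. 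The main obstacle is conceptual rather than computational: the right-hand side of (\ref{muero3}) is an order supremum in $X$, which need not exist a priori when $X$ is not Dedekind complete. The approximation supplied by Lemma \ref{2555}, combined with the fact that $X^{+}$ is norm-closed, is exactly what simultaneously establishes the existence of the supremum and identifies its value with $\|(x_{1},\ldots ,x_{n})\|_{Y}$.
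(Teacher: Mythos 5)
Your argument is correct and is essentially the proof the paper intends (the paper itself defers the proof to \cite{1er art}, but it places identity (\ref{7nov}) and Lemma \ref{2555} immediately before the proposition precisely as the ingredients you use, mirroring the $\ell^{p}$ case in \cite[p.~42]{Linden}): represent $\left\Vert \cdot \right\Vert _{\mathbb{R}^{n},Y}$ as a pointwise supremum of linear forms over $B_{Y^{\ast}}$, approximate uniformly on $S^{n-1}$ by finite maxima, and push through $\tau_{n,x}$ using its linearity, lattice-homomorphism property and the continuity estimate (\ref{NUEVAPROP2}). Your closing observation --- that the norm-closedness of $X^{+}$ is what simultaneously yields existence of the order supremum in a possibly non-Dedekind-complete $X$ and identifies it with $\tau_{n,x}\left( \left\Vert \cdot \right\Vert _{\mathbb{R}^{n},Y}\right)$ --- is exactly the right point to make explicit.
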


\begin{corollary}
For each $n\in 
%TCIMACRO{\U{2115} }%
%BeginExpansion
\mathbb{N}
%EndExpansion
$ and $x=\left( x_{1},...,x_{n}\right) \in X^{n},\label{coro1405}$%
\begin{equation}
\left\Vert x\right\Vert _{X^{n},Y,\tau }\leq \sup_{k\in 
%TCIMACRO{\U{2115} }%
%BeginExpansion
\mathbb{N}
%EndExpansion
}\left\{ \left\Vert
\bigvee\limits_{j=1}^{k}\sum_{i=1}^{n}a_{i,j}x_{i}\right\Vert
_{X}:\left\Vert \left( a_{1,j},...,a_{n,j}\right) \right\Vert _{Y^{\ast
}}\leq 1,1\leq j\leq k\right\} .  \label{wsde}
\end{equation}
\end{corollary}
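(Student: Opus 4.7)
The plan is to unfold the definition $\|x\|_{X^{n},Y,\tau}=\|\tau_{n,x}(\|\cdot\|_{\mathbb{R}^{n},Y})\|_{X}$ and then approximate $h:=\|\cdot\|_{\mathbb{R}^{n},Y}$, as an element of $\mathcal{H}_{n}$, by finite lattice suprema of the linear functionals $L_{a}(t):=\sum_{i=1}^{n}a_{i}t_{i}$ with $\|(a_{1},\ldots,a_{n})\|_{Y^{\ast}}\leq 1$. Applying Krivine's calculus $\tau_{n,x}$ term by term to each approximant will turn it into one of the vectors $\bigvee_{j=1}^{k}\sum_{i=1}^{n}a_{i,j}x_{i}$ that appear on the right-hand side of (\ref{wsde}), so passing to the limit will conclude.

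First, I would invoke the duality representation (\ref{7nov}) arising from identifying $(\mathbb{R}^{n},\|\cdot\|_{Y})^{\ast}$ with $(\mathbb{R}^{n},\|\cdot\|_{Y^{\ast}})$: restricted to the compact sphere $S^{n-1}$, the function $h|_{S^{n-1}}$ coincides with the pointwise supremum of the family $\mathcal{C}:=\{L_{a}|_{S^{n-1}}:\|a\|_{Y^{\ast}}\leq 1\}\subset C(S^{n-1})$. I would then apply Lemma \ref{2555} with $K=S^{n-1}$ and this $\mathcal{C}$ to obtain a sequence $\{g_{k}\}_{k=1}^{\infty}\subset C(S^{n-1})$ with $\|g_{k}-h|_{S^{n-1}}\|_{C(S^{n-1})}\to 0$ and each $g_{k}=\max\{L_{a^{(1)}}|_{S^{n-1}},\ldots,L_{a^{(N(k))}}|_{S^{n-1}}\}$ for some finite collection with $\|a^{(j)}\|_{Y^{\ast}}\leq 1$. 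Extending by positive homogeneity yields $G_{k}=\bigvee_{j=1}^{N(k)}L_{a^{(j)}}\in\mathcal{H}_{n}$ (since each $L_{a^{(j)}}$ is already in $\mathcal{H}_{n}$ and pointwise joins preserve homogeneity of degree $1$), and by the very definition of $\|\cdot\|_{\mathcal{H}_{n}}$ the previous uniform estimate upgrades to $\|G_{k}-h\|_{\mathcal{H}_{n}}\to 0$.

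Next, I would use that $\tau_{n,x}$ is linear, satisfies $\tau_{n,x}(\pi_{n,i})=x_{i}$, and preserves finite suprema by Theorem \ref{krivin}(ii), to compute
\[
\tau_{n,x}(G_{k}) \;=\; \bigvee_{j=1}^{N(k)}\tau_{n,x}(L_{a^{(j)}}) \;=\; \bigvee_{j=1}^{N(k)}\sum_{i=1}^{n}a_{i,j}x_{i}\in X.
\]
By the continuity estimate (\ref{NUEVAPROP2}), the map $\tau_{n,x}:\mathcal{H}_{n}\to X$ is continuous, hence $\tau_{n,x}(G_{k})\to\tau_{n,x}(h)$ in $X$ and consequently $\|\tau_{n,x}(G_{k})\|_{X}\to\|x\|_{X^{n},Y,\tau}$. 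Since each $\|\tau_{n,x}(G_{k})\|_{X}$ is precisely one of the quantities inside the set on the right-hand side of (\ref{wsde}) (with $k$ in that formula playing the role of my $N(k)$), the limit is dominated by the supremum, which is the desired inequality.

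The step I expect to be the main obstacle is the approximation phase: Lemma \ref{2555} only provides that each $g_{k}$ is a maximum of \emph{finitely many} elements of $\mathcal{C}$, with no a priori control on how many, so I must be careful that extending to $\mathbb{R}^{n}$ by homogeneity keeps the approximants inside $\mathcal{H}_{n}$ and that the re-indexing between the sequential index and the index of the join in (\ref{wsde}) does not lose elements of the set over which the supremum is taken. Once that bookkeeping is in place, the remaining ingredients—linearity of $\tau_{n,x}$, preservation of joins, and the continuity bound—are supplied directly by Theorem \ref{krivin} and Proposition \ref{propkrivi}, so no further analytic work is needed.
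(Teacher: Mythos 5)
Your argument is correct, and it is exactly the route the paper's machinery is set up for (the paper itself defers the proof to \cite{1er art}, but Lemma \ref{2555}, the identification \eqref{7nov}, and the continuity bound \eqref{NUEVAPROP2} are stated precisely to enable this uniform approximation of $\left\Vert \cdot \right\Vert _{\mathbb{R}^{n},Y}$ on $S^{n-1}$ by finite maxima of functionals $L_{a}$ with $\left\Vert a\right\Vert _{Y^{\ast }}\leq 1$, followed by an application of $\tau _{n,x}$). The only point worth making explicit is that the symmetry of the dual unit ball lets you drop the absolute value in \eqref{7nov}, so that $h|_{S^{n-1}}$ really is the pointwise supremum of your family $\mathcal{C}$; the bookkeeping issue you flag is harmless because the supremum in \eqref{wsde} ranges over all $k$ and all admissible tuples.
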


\bigskip

Next we present our generalization of $p$-convexity and $q$-concavity:

\begin{definition}
A linear operator $T:$ $E\rightarrow X$ is $Y$-convex if there exists a
constant $C>0$ satisfying 
\begin{equation}
\left\Vert \left( Tw_{1},...,Tw_{n}\right) \right\Vert _{X^{n},Y,\tau }\leq
C\left\Vert \left( w_{1},...,w_{n}\right) \right\Vert _{E^{n},Y},
\label{20073}
\end{equation}%
for each $n\in 
%TCIMACRO{\U{2115} }%
%BeginExpansion
\mathbb{N}
%EndExpansion
$ and $w_{1},...,w_{n}\in E.$ The smallest constant satisfying (\ref{20073})
for all such $n\in N$ and $w_{j}$'s ($j=1,...,n$) is called the $Y$%
-convexity constant of $T$ and is denoted by $\left\Vert T\right\Vert
_{K^{Y}}.$ Also, we will write%
\begin{equation*}
\mathcal{K}^{Y}\left( E,X\right) =\left\{ T:E\rightarrow X\text{ s.t. }T%
\text{ is }Y\text{-convex}\right\} .
\end{equation*}%
\newline
\indent Similarly, a linear operator $S:X\rightarrow E$ is $Y$\textit{%
-concave\ }if there exists a constant $K>0$ satisfying 
\begin{equation}
\left\Vert \left( Sx_{1},...,Sx_{n}\right) \right\Vert _{E^{n},Y}\leq
K\left\Vert \left( x_{1},...,x_{n}\right) \right\Vert _{X^{n},Y,\tau },
\label{20074}
\end{equation}%
for each $n\in 
%TCIMACRO{\U{2115} }%
%BeginExpansion
\mathbb{N}
%EndExpansion
$ and $x_{1},...,x_{n}\in X.$ The smallest constant satisfying (\ref{20074})
for all such $n\in N$ and $x_{j}$'s ($j=1,...,n$) is called the $Y$%
-concavity constant of $S$ and is denoted by $\left\Vert S\right\Vert
_{K_{Y}}.$ Also, we will write%
\begin{equation*}
\mathcal{K}_{Y}\left( X,E\right) =\left\{ S:X\rightarrow E\text{ s.t. }S%
\text{ is }Y\text{-concave}\right\} .
\end{equation*}
\end{definition}

It is worthwhile to recall that $\mathcal{K}^{Y}\left( E,X\right) $ and $%
\mathcal{K}_{Y}\left( X,E\right) $ are vector subspaces of $\mathcal{L}%
\left( E,X\right) $ and $\mathcal{L}\left( X,E\right) ,$ respectively. Also $%
\left\Vert \cdot \right\Vert _{K^{Y}}$ and $\left\Vert \cdot \right\Vert
_{K_{Y}}$ define norms on $\mathcal{K}^{Y}\left( E,X\right) $ and $\mathcal{K%
}_{Y}\left( X,E\right) $.

\section{Vector sequence spaces}

Recall $E,F$ are Banach spaces, $X,W,Z$ are Banach lattices and $Y$ is a
Banach sequence lattice. We will treat the elements of $E^{%
%TCIMACRO{\U{2115} }%
%BeginExpansion
\mathbb{N}
%EndExpansion
}$ indistinctly as vector sequences $\left\{ w_{j}\right\} _{j=1}^{\infty },$
or as functions $w:%
%TCIMACRO{\U{2115} }%
%BeginExpansion
\mathbb{N}
%EndExpansion
\rightarrow E$ such that $w\left( j\right) :=w_{j}\in E,j\in 
%TCIMACRO{\U{2115} }%
%BeginExpansion
\mathbb{N}
%EndExpansion
.$ Also, given $w=\left\{ w_{j}\right\} _{j=1}^{\infty }\in E^{%
%TCIMACRO{\U{2115} }%
%BeginExpansion
\mathbb{N}
%EndExpansion
},$ we will denote by $\left\{ w_{j}\right\} _{j=1}^{n}$ the sequence whose
first $n$ components are $w_{1},...,w_{n}$ and $w_{j}=0$ for $j>n.$
Similarly we will denote $w\cdot \chi _{_{%
%TCIMACRO{\U{2115} }%
%BeginExpansion
\mathbb{N}
%EndExpansion
\backslash \{1,...,n\}}}$ by $\left\{ w_{j}\right\} _{j=n+1}^{\infty }.$
Along this work, when $E$ is a Riesz space, we will consider the order by
components in $E^{%
%TCIMACRO{\U{2115} }%
%BeginExpansion
\mathbb{N}
%EndExpansion
}$. Clearly, with such order, $E^{%
%TCIMACRO{\U{2115} }%
%BeginExpansion
\mathbb{N}
%EndExpansion
}$ is a Riesz space.

\bigskip

For each $w=\left\{ w_{j}\right\} _{j=1}^{\infty }\in E^{%
%TCIMACRO{\U{2115} }%
%BeginExpansion
\mathbb{N}
%EndExpansion
}$ let $J_{w}:%
%TCIMACRO{\U{2115} }%
%BeginExpansion
\mathbb{N}
%EndExpansion
\rightarrow 
%TCIMACRO{\U{211d} }%
%BeginExpansion
\mathbb{R}
%EndExpansion
$ be the function given by $J_{w}\left( j\right) :=\left\Vert
w_{j}\right\Vert _{E},j\in 
%TCIMACRO{\U{2115} }%
%BeginExpansion
\mathbb{N}
%EndExpansion
.$ Next let us define the space%
\begin{equation*}
\ell ^{Y}\left( E\right) :=\left\{ w\in E^{%
%TCIMACRO{\U{2115} }%
%BeginExpansion
\mathbb{N}
%EndExpansion
}:J_{w}\in Y\right\} =\left\{ \left\{ w_{j}\right\} _{j=1}^{\infty }\in E^{%
%TCIMACRO{\U{2115} }%
%BeginExpansion
\mathbb{N}
%EndExpansion
}:\left\{ \left\Vert w_{j}\right\Vert _{E}\right\} _{j=1}^{\infty }\in
Y\right\}
\end{equation*}%
and the function $\left\Vert \cdot \right\Vert _{\ell ^{Y}\left( E\right)
}:\ell ^{Y}\left( E\right) \rightarrow 
%TCIMACRO{\U{211d} }%
%BeginExpansion
\mathbb{R}
%EndExpansion
$ by%
\begin{equation*}
\left\Vert w\right\Vert _{\ell ^{Y}\left( E\right) }:=\left\Vert
J_{w}\right\Vert _{Y}=\left\Vert \left\{ \left\Vert w_{j}\right\Vert
_{E}\right\} _{j=1}^{\infty }\right\Vert _{Y},\forall w=\left\{
w_{j}\right\} _{j=1}^{\infty }\in \ell ^{Y}\left( E\right) .
\end{equation*}%
\indent Clearly $\left( \ell ^{Y}\left( E\right) ,\left\Vert \cdot
\right\Vert _{\ell ^{Y}\left( E\right) }\right) $ is a Riesz space. If in
addition $E$ is a normed lattice, $\left\Vert \cdot \right\Vert _{\ell
^{Y}\left( E\right) }$ is an order preserving norm and consequently $\ell
^{Y}\left( E\right) $ is a normed lattice. Also, given $w,z\in \ell
^{Y}\left( E\right) ,$%
\begin{eqnarray*}
\left\Vert J_{w}-J_{z}\right\Vert _{Y} &=&\left\Vert \left\{ \left\Vert
w_{j}\right\Vert _{E}\right\} _{j=1}^{\infty }-\left\{ \left\Vert
z_{j}\right\Vert _{E}\right\} _{j=1}^{\infty }\right\Vert _{Y}=\left\Vert
\left\{ \left\vert \left\Vert w_{j}\right\Vert _{E}-\left\Vert
z_{j}\right\Vert _{E}\right\vert \right\} _{j=1}^{\infty }\right\Vert _{Y} \\
&\leq &\left\Vert \left\{ \left\Vert w_{j}-z_{j}\right\Vert _{E}\right\}
_{j=1}^{\infty }\right\Vert _{Y}=\left\Vert J_{w-z}\right\Vert
_{Y}=\left\Vert w-z\right\Vert _{\ell ^{Y}\left( E\right) }.
\end{eqnarray*}%
\indent Thus, the function $J:\ell ^{Y}\left( E\right) \rightarrow Y$ given
by $J\left( w\right) :=J_{w},w\in E^{%
%TCIMACRO{\U{2115} }%
%BeginExpansion
\mathbb{N}
%EndExpansion
}$ is a Lipschitz function.

\bigskip

We will denote by $\ell _{00}^{Y}\left( E\right) $ the linear subspace of
eventually zero sequences in $\ell ^{Y}\left( E\right) $ and we will write $%
\ell _{0}^{Y}\left( E\right) $ to denote the closure of $\ell
_{00}^{Y}\left( E\right) $ in $\ell ^{Y}\left( E\right) .$

\bigskip

Observe that, for every $n\in 
%TCIMACRO{\U{2115} }%
%BeginExpansion
\mathbb{N}
%EndExpansion
,$ there is a natural linear isometry $E^{n}\rightarrow \ell ^{Y}\left(
E\right) $ given by $\left( w_{1},...,w_{n}\right) \rightarrow \left(
w_{1},...,w_{n},0,...\right) $. Furthermore%
\begin{equation}
\left( E,\left\Vert \cdot \right\Vert _{E^{1},Y}\right) \hookrightarrow
\left( E^{2},\left\Vert \cdot \right\Vert _{E^{2},Y}\right) \hookrightarrow
...\ell _{00}^{Y}\left( E\right) \hookrightarrow \ell _{0}^{Y}\left(
E\right) \hookrightarrow \ell ^{Y}\left( E\right) .  \label{e}
\end{equation}%
Then, from the properties of $\left( E^{n},\left\Vert \cdot \right\Vert
_{E^{n},Y}\right) $-spaces, it follows that if $\left\{ w^{m}\right\}
_{m=1}^{\infty }$ is a Cauchy (resp. convergent) sequence in $\ell
^{Y}\left( E\right) ,$ then it is a Cauchy (resp. convergent) sequence in $E$
in each one of its components.$\smallskip $\newline

\begin{theorem}
$\ell ^{Y}\left( E\right) $ and $\ell _{0}^{Y}\left( E\right) $ are Banach
spaces. If in addition $E$ is a Banach lattice, then $\ell ^{Y}\left(
E\right) $ and $\ell _{0}^{Y}\left( E\right) $ are Banach lattices.\label%
{compl2}
\end{theorem}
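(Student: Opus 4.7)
The plan is to establish completeness of $\ell^{Y}(E)$ first; the corresponding statements for $\ell_{0}^{Y}(E)$ and for the Banach lattice case will follow with little extra work. Let $\{w^{m}\}_{m=1}^{\infty}$ be a Cauchy sequence in $\ell^{Y}(E)$. By the observation following (\ref{e}), this sequence is Cauchy componentwise in $E$, so for each $j$ there exists $w_{j}:=\lim_{m\to\infty}w_{j}^{m}\in E$. The candidate limit is $w:=\{w_{j}\}_{j=1}^{\infty}$.

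Next I would verify that $w\in\ell^{Y}(E)$. The key observation is that the map $J:\ell^{Y}(E)\to Y$, $w\mapsto J_{w}$, is Lipschitz (this is proved just before the theorem), so $\{J_{w^{m}}\}_{m=1}^{\infty}$ is Cauchy in $Y$ and therefore converges to some $g\in Y$. By Theorem \ref{convmedida} a subsequence $J_{w^{m_{k}}}$ converges to $g$ pointwise (i.e., $\mu_{\#}$-a.e., which for counting measure means at every $j$). But by continuity of the norm on $E$ and the componentwise convergence $w_{j}^{m_{k}}\to w_{j}$, we also have $J_{w^{m_{k}}}(j)=\|w_{j}^{m_{k}}\|_{E}\to\|w_{j}\|_{E}=J_{w}(j)$ pointwise. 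Hence $J_{w}=g\in Y$, which says precisely that $w\in\ell^{Y}(E)$.

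To prove $w^{m}\to w$ in $\ell^{Y}(E)$, fix $\varepsilon>0$ and $N$ with $\|w^{m}-w^{l}\|_{\ell^{Y}(E)}\leq\varepsilon$ for all $m,l\geq N$. For fixed $m\geq N$, the Lipschitz property of $J$ again makes $\{J_{w^{m}-w^{l}}\}_{l=1}^{\infty}$ Cauchy in $Y$; moreover, by the triangle inequality in $E$, this sequence converges pointwise to $J_{w^{m}-w}$. I would then apply Corollary \ref{coromuctp} to conclude that $J_{w^{m}-w}\in Y$ and $J_{w^{m}-w^{l}}\xrightarrow{Y}J_{w^{m}-w}$, so that
\begin{equation*}
\|w^{m}-w\|_{\ell^{Y}(E)}=\|J_{w^{m}-w}\|_{Y}=\lim_{l\to\infty}\|J_{w^{m}-w^{l}}\|_{Y}\leq\varepsilon.
\end{equation*}
The main (and really only) subtle step is this interplay between norm convergence in $Y$ and componentwise convergence; once Theorem \ref{convmedida} and Corollary \ref{coromuctp} are invoked, everything reduces to the scalar $Y$-norm.

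For $\ell_{0}^{Y}(E)$, completeness is automatic since it is defined as a closed subspace of $\ell^{Y}(E)$. Finally, assume $E$ is a Banach lattice. We already know that $\|\cdot\|_{\ell^{Y}(E)}$ is order-preserving, so $\ell^{Y}(E)$ is a normed lattice, and combined with the completeness just proved it is a Banach lattice. For $\ell_{0}^{Y}(E)$ the only point to check is that it is a Riesz subspace; given $w\in\ell_{0}^{Y}(E)$ with approximants $w^{m}\in\ell_{00}^{Y}(E)$, the sequence $|w^{m}|$ lies in $\ell_{00}^{Y}(E)$, and the pointwise estimate $\bigl|\|w_{j}^{m}\|_{E}-\|w_{j}\|_{E}\bigr|\leq\|w_{j}^{m}-w_{j}\|_{E}$ together with monotonicity of $\|\cdot\|_{Y}$ yields $\||w^{m}|-|w|\|_{\ell^{Y}(E)}\leq\|w^{m}-w\|_{\ell^{Y}(E)}\to 0$, so $|w|\in\ell_{0}^{Y}(E)$.
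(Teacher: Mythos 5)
Your argument is correct and follows essentially the same route as the paper: componentwise limits, the Lipschitz map $J$, and Theorem \ref{convmedida}/Corollary \ref{coromuctp} to identify the limit and to upgrade pointwise to $Y$-norm convergence. One cosmetic remark: in the last paragraph the inequality you actually need is the lattice estimate $\left\Vert \left\vert w_{j}^{m}\right\vert -\left\vert w_{j}\right\vert \right\Vert _{E}\leq \left\Vert w_{j}^{m}-w_{j}\right\Vert _{E}$ (from $\left\vert \left\vert a\right\vert -\left\vert b\right\vert \right\vert \leq \left\vert a-b\right\vert $ and the order-preserving norm on $E$), not the scalar estimate $\left\vert \left\Vert w_{j}^{m}\right\Vert _{E}-\left\Vert w_{j}\right\Vert _{E}\right\vert \leq \left\Vert w_{j}^{m}-w_{j}\right\Vert _{E}$; with that correction, your verification that $\ell _{0}^{Y}\left( E\right) $ is closed under $\left\vert \cdot \right\vert $ is a welcome detail that the paper's proof leaves implicit.
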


\begin{proof}
Let $\left\{ x^{m}\right\} _{m=1}^{\infty }\subset \ell ^{Y}\left( E\right) $
be a Cauchy sequence, where $x^{m}=\left\{ x_{j}^{m}\right\} _{j=1}^{\infty
},$\linebreak $m\in 
%TCIMACRO{\U{2115} }%
%BeginExpansion
\mathbb{N}
%EndExpansion
.$ Since the function $J:\ell ^{Y}\left( E\right) \rightarrow Y$ is
Lipschitz, $\left\{ J_{x^{m}}\right\} _{m=1}^{\infty }$ is a Cauchy sequence
in $Y$ and thus converges to some $f\in Y.$ On the other side, for each $%
n\in 
%TCIMACRO{\U{2115} }%
%BeginExpansion
\mathbb{N}
%EndExpansion
,$ the sequence $\left\{ x_{n}^{m}\right\} _{m=1}^{\infty }$ is convergent
in $E$ to some $x_{n}.$ Let $x:=\left\{ x_{n}\right\} _{n=1}^{\infty }\in E^{%
%TCIMACRO{\U{2115} }%
%BeginExpansion
\mathbb{N}
%EndExpansion
}.$ Then%
\begin{equation*}
\left\vert J_{x^{m}}\left( n\right) -J_{x}\left( n\right) \right\vert
=\left\vert \left\Vert x_{n}^{m}\right\Vert _{E}-\left\Vert x_{n}\right\Vert
_{E}\right\vert \leq \left\Vert x_{n}^{m}-x_{n}\right\Vert _{E}\rightarrow
_{m\rightarrow \infty }0,\forall n\in 
%TCIMACRO{\U{2115} }%
%BeginExpansion
\mathbb{N}
%EndExpansion
.
\end{equation*}%
\indent Therefore $\left\{ J_{x^{m}}\right\} _{m=1}^{\infty }$ converges
pointwise to $J_{x}.$ So, by Corollary \ref{coromuctp}, $J_{x}=f\in Y$ and
consequently $x\in \ell ^{Y}\left( E\right) .$\newline
\newline
\indent Since $\left\{ x^{m}-x\right\} _{m=1}^{\infty }$ is a Cauchy
sequence in $\ell ^{Y}\left( E\right) ,$ we have that $\left\{
J_{x^{m}-x}\right\} _{m=1}^{\infty }$ is a Cauchy sequence in $Y$ and thus
convergent. Also 
\begin{equation*}
J_{x^{m}-x}\left( n\right) =\left\Vert x_{n}^{m}-x_{n}\right\Vert
_{E}\rightarrow _{m\rightarrow \infty }0,\forall n\in 
%TCIMACRO{\U{2115} }%
%BeginExpansion
\mathbb{N}
%EndExpansion
.
\end{equation*}%
\indent Thus, $\left\{ J_{x^{m}-x}\right\} _{m=1}^{\infty }$ converges
pointwise to zero. Then, by Corollary \ref{coromuctp}, $\left\Vert
J_{x^{m}-x}\right\Vert _{Y}\rightarrow 0$ and consequently $\left\Vert
x^{m}-x\right\Vert _{\ell ^{Y}\left( E\right) }\rightarrow 0.$\newline
\newline
\indent The above proves that $\ell ^{Y}\left( E\right) ,$ and consequently $%
\ell _{0}^{Y}\left( E\right) ,$ are complete. Therefore, if $E$ is a Banach
lattice, then $\ell ^{Y}\left( E\right) $ is a Banach lattice.
\end{proof}

\bigskip

Observe that. by the above theorem, the normed space $\ell _{0}^{Y}\left(
E\right) $ is complete.

\begin{lemma}
Let $E$ be a Banach space and $Y$ a Banach sequence space. Then\label{1708} 
\begin{equation}
\left\Vert \left\{ w_{j}\right\} _{j=n}^{\infty }\right\Vert _{\ell
^{Y}\left( E\right) }\downarrow 0,\forall \left\{ w_{j}\right\}
_{j=1}^{\infty }\in \ell _{0}^{Y}\left( E\right)  \label{NUEVO1}
\end{equation}%
and in consequence $\left\{ w_{j}\right\} _{j=1}^{n}\rightarrow
_{n\rightarrow \infty }\left\{ w_{j}\right\} _{j=1}^{\infty }$ in $\ell
^{Y}\left( E\right) .$
\end{lemma}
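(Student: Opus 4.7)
The plan is to establish two things: that the sequence of tail norms $a_n := \|\{w_j\}_{j=n}^\infty\|_{\ell^Y(E)}$ is monotone non-increasing in $n$, and that it converges to $0$. The consequence about $\{w_j\}_{j=1}^n \to \{w_j\}_{j=1}^\infty$ then drops out immediately, since by the conventions set in this section,
\[
w - \{w_j\}_{j=1}^n = \{w_j\}_{j=n+1}^\infty,
\]
so $\|w - \{w_j\}_{j=1}^n\|_{\ell^Y(E)} = a_{n+1} \downarrow 0$.

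For monotonicity, I would observe that the auxiliary function $J_{\{w_j\}_{j=n}^\infty}\colon \mathbb{N}\to\mathbb{R}$ takes the value $\|w_j\|_E$ for $j\geq n$ and $0$ otherwise. As $n$ grows, more leading coordinates are zeroed out, so these functions form a pointwise non-increasing sequence in $Y^+$. Because $\|\cdot\|_Y$ is order-preserving, the sequence $\{a_n\}_{n=1}^\infty$ is non-increasing.

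The main (and only nontrivial) step is showing $a_n \to 0$, and this is where the hypothesis $w\in \ell_0^Y(E)$ is used crucially. Given $\varepsilon>0$, by definition of $\ell_0^Y(E)$ as the closure of $\ell_{00}^Y(E)$, I would pick $v = \{v_j\}_{j=1}^\infty \in \ell_{00}^Y(E)$ with $\|w-v\|_{\ell^Y(E)} < \varepsilon$, and an index $N$ so that $v_j = 0$ for all $j > N$. Then for every $n > N$ one has $\{w_j\}_{j=n}^\infty = \{(w-v)_j\}_{j=n}^\infty$, and comparing the associated $J$-functions pointwise gives
\[
J_{\{w_j\}_{j=n}^\infty}(k) = \|w_k-v_k\|_E\,\chi_{\{k\geq n\}}(k) \leq \|w_k-v_k\|_E = J_{w-v}(k),\qquad k\in\mathbb{N}.
\]
Applying the order-preserving property of $\|\cdot\|_Y$ yields $a_n \leq \|w-v\|_{\ell^Y(E)} < \varepsilon$ for all $n > N$.

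Combining this with monotonicity gives $a_n \downarrow 0$, which is exactly \eqref{NUEVO1}; the in-consequence statement then follows as noted in the first paragraph. The argument is quite short because the heavy lifting is already packaged into the order-preserving property of $\|\cdot\|_Y$ and the density definition of $\ell_0^Y(E)$; the only place where one must be careful is the explicit identification $w - \{w_j\}_{j=1}^n = \{w_j\}_{j=n+1}^\infty$ and the bookkeeping of which coordinates vanish, so that the comparison with $w-v$ is valid.
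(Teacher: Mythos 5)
Your proposal is correct and follows essentially the same route as the paper: approximate $w$ by an eventually zero sequence $v$, note that beyond the support of $v$ the tail of $w$ equals the corresponding tail of $w-v$, and use the order-preserving property of $\left\Vert \cdot \right\Vert _{Y}$ to bound the tail norm by $\left\Vert w-v\right\Vert _{\ell ^{Y}\left( E\right) }$. You merely spell out the monotonicity step and the pointwise comparison of the $J$-functions, which the paper leaves as ``clearly''.
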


\begin{proof}
Let $\left\{ w_{j}\right\} _{j=1}^{\infty }\in \ell _{0}^{Y}\left( E\right)
. $ Clearly $\left\Vert \left\{ w_{j}\right\} _{j=n}^{\infty }\right\Vert
_{\ell ^{Y}\left( E\right) }\downarrow $. Let us fix $\epsilon >0$ and take $%
\left\{ v_{j}\right\} _{j=1}^{\infty }\in \ell _{00}^{Y}\left( E\right) $
such that 
\begin{equation*}
\left\Vert \left\{ w_{j}\right\} _{j=1}^{\infty }-\left\{ v_{j}\right\}
_{j=1}^{\infty }\right\Vert _{\ell ^{Y}\left( E\right) }\leq \epsilon .
\end{equation*}%
\indent Let $n\in 
%TCIMACRO{\U{2115} }%
%BeginExpansion
\mathbb{N}
%EndExpansion
$ be such that $v_{j}=0$ for $j\geq n$. Then%
\begin{equation*}
\left\Vert \left\{ w_{j}\right\} _{j=n}^{\infty }\right\Vert _{\ell
^{Y}\left( E\right) }=\left\Vert \left\{ w_{j}\right\} _{j=n}^{\infty
}-\left\{ v_{j}\right\} _{j=n}^{\infty }\right\Vert _{\ell ^{Y}\left(
E\right) }\leq \left\Vert \left\{ w_{j}\right\} _{j=1}^{\infty }-\left\{
v_{j}\right\} _{j=1}^{\infty }\right\Vert _{\ell ^{Y}\left( E\right) }\leq
\epsilon .
\end{equation*}%
\indent Thus $\left( \ref{NUEVO1}\right) $ is satified and consequently $%
\left\{ w_{j}\right\} _{j=1}^{n}\rightarrow \left\{ w_{j}\right\}
_{j=1}^{\infty }$ in $\ell ^{Y}\left( E\right) .$
\end{proof}

\begin{definition}
Let $X$ be a Banach lattice. Then we define the space%
\begin{equation*}
\ell ^{\tau ,Y}\left( X\right) :=\left\{ \left\{ x_{j}\right\}
_{j=1}^{\infty }\in X^{%
%TCIMACRO{\U{2115} }%
%BeginExpansion
\mathbb{N}
%EndExpansion
}:\sup_{n\in 
%TCIMACRO{\U{2115} }%
%BeginExpansion
\mathbb{N}
%EndExpansion
}\left\{ \left\Vert \left( x_{1},...,x_{n}\right) \right\Vert _{X^{n},Y,\tau
}\right\} <\infty \right\}
\end{equation*}%
and the function $\left\Vert \cdot \right\Vert _{\ell ^{\tau ,Y}\left(
X\right) }:\ell ^{\tau ,Y}\left( X\right) \rightarrow 
%TCIMACRO{\U{211d} }%
%BeginExpansion
\mathbb{R}
%EndExpansion
$ by%
\begin{equation*}
\left\Vert \left\{ x_{j}\right\} _{j=1}^{\infty }\right\Vert _{\ell ^{\tau
,Y}\left( X\right) }:=\sup_{n\in 
%TCIMACRO{\U{2115} }%
%BeginExpansion
\mathbb{N}
%EndExpansion
}\left\{ \left\Vert \left( x_{1},...,x_{n}\right) \right\Vert _{X^{n},Y,\tau
}\right\} .
\end{equation*}
\end{definition}

\bigskip

Clearly $\left\Vert \cdot \right\Vert _{\ell ^{\tau ,Y}\left( X\right) }$ is
an order preserving norm and consequently $\ell ^{\tau ,Y}\left( X\right) $
is a normed lattice. We will denote by $\ell _{00}^{\tau ,Y}\left( X\right) $
the linear subspace of eventually zero sequences in $\ell ^{\tau ,Y}\left(
X\right) $ and we will write $\ell _{0}^{\tau ,Y}\left( X\right) $ to denote
the closure $\ell _{00}^{\tau ,Y}\left( X\right) $ in $\ell ^{\tau ,Y}\left(
X\right) .$

\bigskip

Similarly to $\ell ^{Y}\left( E\right) ,$ for every $n\in 
%TCIMACRO{\U{2115} }%
%BeginExpansion
\mathbb{N}
%EndExpansion
,$ there exists a natural isometry $X^{n}\rightarrow \ell ^{\tau ,Y}\left(
X\right) $ given by $\left( x_{1},...,x_{n}\right) \rightarrow \left(
x_{1},...,x_{n},0,...\right) $. Furthermore%
\begin{equation}
\left( X,\left\Vert \cdot \right\Vert _{X,\tau ,Y}\right) \hookrightarrow
\left( X^{2},\left\Vert \cdot \right\Vert _{X^{2},\tau ,Y}\right)
\hookrightarrow ...\hookrightarrow \ell _{00}^{\tau ,Y}\left( X\right)
\hookrightarrow \ell _{0}^{\tau ,Y}\left( X\right)  \label{inciso}
\end{equation}%
isometrically. Hence, if $\left\{ x^{m}\right\} _{m=1}^{\infty }$ is a
Cauchy (resp. convergent) sequence in $\ell ^{\tau ,Y}\left( X\right) ,$
then it is a Cauchy (resp. convergent) sequence in $X$ in each one of its
components.$\smallskip $ Furthermore, for every $x:=\left(
x_{1},...,x_{n},0,...\right) \in \ell _{00}^{\tau ,Y}\left( X\right) $ we
have that%
\begin{equation*}
\left\Vert x\right\Vert _{\ell ^{\tau ,Y}\left( X\right) }=\left\Vert \left(
x_{1},...,x_{n}\right) \right\Vert _{X^{n},Y,\tau }.
\end{equation*}

\bigskip

The proof of the next lemma is analogous to that of Lemma \ref{1708}, we
simply replace $\left\Vert \cdot \right\Vert _{\ell ^{Y}\left( E\right) }$
by $\left\Vert \cdot \right\Vert _{\ell ^{\tau ,Y}\left( X\right) }.$

\begin{lemma}
Let $X$ be a Banach lattice. Then\label{nuevo2} 
\begin{equation*}
\left\Vert \left\{ x_{j}\right\} _{j=n}^{\infty }\right\Vert _{\ell ^{\tau
,Y}\left( X\right) }\downarrow 0,\forall \left\{ x_{j}\right\}
_{j=1}^{\infty }\in \ell _{0}^{\tau ,Y}\left( X\right)
\end{equation*}%
and in consequence $\left\{ x_{j}\right\} _{j=1}^{n}\rightarrow
_{n\rightarrow \infty }\left\{ x_{j}\right\} _{j=1}^{\infty }$ in $\ell
^{\tau ,Y}\left( X\right) .$
\end{lemma}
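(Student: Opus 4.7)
The plan is to mirror the proof of Lemma~\ref{1708} almost verbatim, with $\left\Vert \cdot \right\Vert _{\ell ^{Y}\left( E\right) }$ replaced by $\left\Vert \cdot \right\Vert _{\ell ^{\tau ,Y}\left( X\right) }$, as the authors themselves suggest. The two structural ingredients that need to be verified in this new setting are: (a) monotonicity of the tail norms $\left\Vert \left\{ x_{j}\right\} _{j=n}^{\infty }\right\Vert _{\ell ^{\tau ,Y}\left( X\right) }$ in $n$, and (b) the definitional density of $\ell _{00}^{\tau ,Y}\left( X\right) $ in $\ell _{0}^{\tau ,Y}\left( X\right) $.

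First I would observe that $n\mapsto \left\Vert \left\{ x_{j}\right\} _{j=n}^{\infty }\right\Vert _{\ell ^{\tau ,Y}\left( X\right) }$ is nonincreasing. Indeed, zeroing out one additional leading coordinate produces a sequence that is pointwise smaller in absolute value, so by Lemma~\ref{lemalema}(ii) applied inside $X^{k}$ and then by monotonicity of $\left\Vert \cdot \right\Vert _{X}$, each finite-section norm $\left\Vert \left( 0,\ldots,0,x_{m},\ldots,x_{k}\right) \right\Vert _{X^{k},Y,\tau }$ is monotone nonincreasing in $m$; taking the supremum over $k$ preserves the inequality, which also yields $\left\Vert \left\{ x_{j}\right\} _{j=n}^{\infty }\right\Vert _{\ell ^{\tau ,Y}\left( X\right) }\leq \left\Vert \left\{ x_{j}\right\} _{j=1}^{\infty }\right\Vert _{\ell ^{\tau ,Y}\left( X\right) }$, and more generally that truncation is a norm contraction.

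Next, given $\varepsilon >0$ and $\left\{ x_{j}\right\} _{j=1}^{\infty }\in \ell _{0}^{\tau ,Y}\left( X\right) $, I would choose $\left\{ v_{j}\right\} _{j=1}^{\infty }\in \ell _{00}^{\tau ,Y}\left( X\right) $ with $\left\Vert \left\{ x_{j}\right\} -\left\{ v_{j}\right\} \right\Vert _{\ell ^{\tau ,Y}\left( X\right) }\leq \varepsilon $, and then pick $n$ so that $v_{j}=0$ for all $j\geq n$. For this $n$ we have $\left\{ x_{j}\right\} _{j=n}^{\infty }=\left\{ x_{j}-v_{j}\right\} _{j=n}^{\infty }$, and step~(a) gives
\begin{equation*}
\left\Vert \left\{ x_{j}\right\} _{j=n}^{\infty }\right\Vert _{\ell ^{\tau ,Y}\left( X\right) }=\left\Vert \left\{ x_{j}-v_{j}\right\} _{j=n}^{\infty }\right\Vert _{\ell ^{\tau ,Y}\left( X\right) }\leq \left\Vert \left\{ x_{j}-v_{j}\right\} _{j=1}^{\infty }\right\Vert _{\ell ^{\tau ,Y}\left( X\right) }\leq \varepsilon .
\end{equation*}
Since $\varepsilon $ was arbitrary and the tail norm was shown to be monotone, it decreases to $0$, which by the chain of isometric embeddings in (\ref{inciso}) is equivalent to $\left\{ x_{j}\right\} _{j=1}^{n}\rightarrow \left\{ x_{j}\right\} _{j=1}^{\infty }$ in $\ell ^{\tau ,Y}\left( X\right) $.

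The only potentially delicate point is step~(a): the norm $\left\Vert \cdot \right\Vert _{\ell ^{\tau ,Y}\left( X\right) }$ is built from a supremum of Krivine norms rather than directly from a single B.s.l.-norm on moduli as in $\ell ^{Y}(E)$, so one must be slightly careful that inserting leading zeros decreases, rather than alters, each finite-section norm. This however is exactly what Lemma~\ref{lemalema}(ii) delivers componentwise in $X^{k}$, so no genuinely new estimate is needed beyond what is already available in Section~3.
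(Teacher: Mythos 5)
Your proposal is correct and follows essentially the same route as the paper, which itself states that the proof is obtained from that of Lemma~\ref{1708} by replacing $\left\Vert \cdot \right\Vert _{\ell ^{Y}\left( E\right) }$ with $\left\Vert \cdot \right\Vert _{\ell ^{\tau ,Y}\left( X\right) }$. Your explicit verification that truncation is a norm contraction for $\left\Vert \cdot \right\Vert _{\ell ^{\tau ,Y}\left( X\right) }$, via Lemma~\ref{lemalema}(ii) applied to each finite section before taking the supremum, is exactly the point the paper leaves implicit, and it is handled correctly.
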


\begin{theorem}
Let $X$ be a Banach lattice. Then $\ell ^{\tau ,Y}\left( X\right) $ is a
Banach lattice and consequently $\ell _{0}^{\tau ,Y}\left( X\right) $ is a
Banach lattice. \label{compl}
\end{theorem}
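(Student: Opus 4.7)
The plan is to mimic the completeness argument used in Theorem \ref{compl2}, exploiting the isometric chain $(\ref{inciso})$ to recover the candidate limit component by component, and then passing to the limit in the supremum that defines $\|\cdot\|_{\ell^{\tau,Y}(X)}$. I would begin with a Cauchy sequence $\{x^m\}_{m=1}^\infty \subset \ell^{\tau,Y}(X)$, writing $x^m = \{x_j^m\}_{j=1}^\infty$. Because of the isometric embeddings in $(\ref{inciso})$, for each fixed $j$ the sequence $\{x_j^m\}_{m=1}^\infty$ is Cauchy in $X$, hence converges to some $x_j \in X$. This gives a natural candidate $x := \{x_j\}_{j=1}^\infty \in X^{\mathbb{N}}$.

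Next I would show $x \in \ell^{\tau,Y}(X)$. Since $\{x^m\}$ is Cauchy it is bounded, say $\|x^m\|_{\ell^{\tau,Y}(X)} \leq M$ for all $m$. Fixing $n \in \mathbb{N}$, this yields $\|(x_1^m,\ldots,x_n^m)\|_{X^n,Y,\tau} \leq M$ for every $m$. By Proposition \ref{propoca} the norm $\|\cdot\|_{X^n,Y,\tau}$ is equivalent to $\|\cdot\|_{X^n,1}$, so it is continuous with respect to component-wise convergence in $X^n$. Letting $m \to \infty$ therefore gives $\|(x_1,\ldots,x_n)\|_{X^n,Y,\tau} \leq M$, and taking the supremum over $n$ shows $x \in \ell^{\tau,Y}(X)$ with $\|x\|_{\ell^{\tau,Y}(X)} \leq M$.

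For convergence, given $\varepsilon > 0$, choose $N$ with $\|x^m - x^{m'}\|_{\ell^{\tau,Y}(X)} < \varepsilon$ whenever $m,m' \geq N$. For each fixed $n$ this yields $\|(x_1^m - x_1^{m'}, \ldots, x_n^m - x_n^{m'})\|_{X^n,Y,\tau} < \varepsilon$, and letting $m' \to \infty$ (using again the continuity of $\|\cdot\|_{X^n,Y,\tau}$ on $X^n$) gives $\|(x_1^m - x_1,\ldots,x_n^m - x_n)\|_{X^n,Y,\tau} \leq \varepsilon$. Taking the supremum over $n$ produces $\|x^m - x\|_{\ell^{\tau,Y}(X)} \leq \varepsilon$ for $m \geq N$, so $x^m \to x$ in $\ell^{\tau,Y}(X)$. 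Together with the already established normed-lattice structure, this shows $\ell^{\tau,Y}(X)$ is a Banach lattice.

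Finally, for $\ell_0^{\tau,Y}(X)$: it is closed in $\ell^{\tau,Y}(X)$ by definition, so completeness is automatic. To see it is a sublattice, note that $\ell_{00}^{\tau,Y}(X)$ is obviously stable under $|\cdot|$, and since the modulus is continuous in every Banach lattice this property passes to the closure. Thus $\ell_0^{\tau,Y}(X)$ inherits the lattice structure and is itself a Banach lattice. The main technical point that needs care is the continuity of $\|\cdot\|_{X^n,Y,\tau}$ with respect to component-wise convergence, which is precisely what Proposition \ref{propoca} furnishes; the rest is a textbook Cauchy-to-limit argument.
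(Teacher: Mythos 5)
Your proof is correct and follows essentially the same route as the paper: component-wise recovery of the candidate limit, the uniform Cauchy bound passed to the limit through the continuity of $\left\Vert \cdot \right\Vert _{X^{n},Y,\tau }$ under component-wise convergence (Proposition \ref{propoca}), and the standard $\epsilon$-argument with the supremum over $n$ taken last. The only divergence is at the end, where the paper verifies the lattice property of $\ell _{0}^{\tau ,Y}\left( X\right) $ by an explicit tail estimate on $\left\{ x_{j}\vee z_{j}\right\} $ via Lemma \ref{nuevo2}, while you instead use stability of $\ell _{00}^{\tau ,Y}\left( X\right) $ under $\left\vert \cdot \right\vert $ plus norm-continuity of the modulus to pass to the closure; both arguments are valid and yours is, if anything, tidier.
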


\begin{proof}
Let $\left\{ x^{m}\right\} _{m=1}^{\infty }\subset \ell ^{\tau ,Y}\left(
X\right) $ be a Cauchy sequence, where%
\begin{equation*}
x^{m}:=\left\{ x_{j}^{m}\right\} _{j=1}^{\infty },\forall m\in 
%TCIMACRO{\U{2115} }%
%BeginExpansion
\mathbb{N}
%EndExpansion
.
\end{equation*}%
\indent Then, for each $j\in 
%TCIMACRO{\U{2115} }%
%BeginExpansion
\mathbb{N}
%EndExpansion
$ there exists $x_{j}\in X$ such that the sequence $\left\{
x_{j}^{m}\right\} _{m=1}^{\infty }$ converges to $x_{j}.$ Let $c\in 
%TCIMACRO{\U{211d} }%
%BeginExpansion
\mathbb{R}
%EndExpansion
$ be such that $\left\Vert x^{m}\right\Vert _{\ell ^{\tau ,Y}\left( X\right)
}<c,m\in 
%TCIMACRO{\U{2115} }%
%BeginExpansion
\mathbb{N}
%EndExpansion
.$ Then for each $n\in 
%TCIMACRO{\U{2115} }%
%BeginExpansion
\mathbb{N}
%EndExpansion
,$%
\begin{equation*}
\left\Vert \left( x_{1},...,x_{n}\right) \right\Vert _{X^{n},Y,\tau
}=\lim_{m\rightarrow \infty }\left\Vert \left(
x_{1}^{m},...,x_{n}^{m}\right) \right\Vert _{X^{n},Y,\tau }\leq
\lim_{m\rightarrow \infty }\left\Vert \left\{ x^{m}\right\} _{j=1}^{\infty
}\right\Vert _{\ell ^{\tau ,Y}\left( X\right) }\leq c.
\end{equation*}%
\indent Thus $\sup_{n\in 
%TCIMACRO{\U{2115} }%
%BeginExpansion
\mathbb{N}
%EndExpansion
}\left\{ \left\Vert \left( x_{1},...,x_{n}\right) \right\Vert _{X^{n},Y,\tau
}\right\} \leq c$ and so $x=\left\{ x_{j}\right\} _{j=1}^{\infty }\in \ell
^{\tau ,Y}\left( X\right) .$\newline
\newline
\indent Next we will prove that 
\begin{equation}
\lim_{m\rightarrow \infty }\left\Vert x-x^{m}\right\Vert _{\ell ^{\tau
,Y}\left( X\right) }=0.  \label{esd}
\end{equation}%
\newline
\indent Take $\epsilon >0$ and let $N\in 
%TCIMACRO{\U{2115} }%
%BeginExpansion
\mathbb{N}
%EndExpansion
$ be such that 
\begin{equation}
\left\Vert x^{m}-x^{n}\right\Vert _{\ell ^{\tau ,Y}\left( X\right)
}<\epsilon ,\forall n,m\geq N.  \label{e1}
\end{equation}%
\indent Now fix $n\geq N$. Then, for each $m\geq N,$%
\begin{equation*}
\left\Vert \left( x_{1}^{m}-x_{1}^{n},...,x_{M}^{m}-x_{M}^{n}\right)
\right\Vert _{X^{M},\tau ,Y}\leq \left\Vert x^{m}-x^{n}\right\Vert _{\ell
^{\tau ,Y}\left( X\right) }<\epsilon ,\forall M\in 
%TCIMACRO{\U{2115} }%
%BeginExpansion
\mathbb{N}
%EndExpansion
.
\end{equation*}%
\indent Thus, for each $M\in 
%TCIMACRO{\U{2115} }%
%BeginExpansion
\mathbb{N}
%EndExpansion
$%
\begin{equation*}
\left\Vert \left( x_{1}-x_{1}^{n},...,x_{M}-x_{M}^{n}\right) \right\Vert
_{X^{M},\tau ,Y}=\lim_{m\rightarrow \infty }\left\Vert \left(
x_{1}^{m}-x_{1}^{n},...,x_{M}^{m}-x_{M}^{n}\right) \right\Vert _{X^{M},\tau
,Y}\leq \epsilon .
\end{equation*}%
\indent So%
\begin{equation*}
\left\Vert x-x^{m}\right\Vert _{\ell ^{\tau ,Y}\left( X\right) }=\sup_{n\in 
%TCIMACRO{\U{2115} }%
%BeginExpansion
\mathbb{N}
%EndExpansion
}\left\{ \left\Vert \left( x_{1}-x_{1}^{m},...,x_{n}-x_{n}^{m}\right)
\right\Vert _{X^{n},Y,\tau }\right\} \leq \epsilon .
\end{equation*}%
\indent Then $\left( \ref{esd}\right) $ is satisfied and consequently $\ell
^{\tau ,Y}\left( X\right) $ is a Banach lattice. \newline
\newline
\indent Now we will prove that $\ell _{0}^{\tau ,Y}\left( X\right) $ is an
ideal of $\ell ^{\tau ,Y}\left( X\right) .$ Let $\left\{ x_{j}\right\}
_{j=1}^{\infty }$,$\left\{ z_{j}\right\} _{j=1}^{\infty }\in \ell _{0}^{\tau
,Y}\left( X\right) .$ Then, by the above lemma, 
\begin{equation*}
\left\Vert \left\{ x_{j}\vee z_{j}\right\} _{j=1}^{\infty }-\left\{
x_{j}\vee z_{j}\right\} _{j=1}^{n}\right\Vert _{\ell ^{\tau ,Y}\left(
X\right) }=\left\Vert \left\{ x_{j}\vee z_{j}\right\} _{j=n+1}^{\infty
}\right\Vert _{\ell ^{\tau ,Y}\left( X\right) }
\end{equation*}%
and%
\begin{eqnarray*}
\left\Vert \left\{ x_{j}\vee z_{j}\right\} _{j=n+1}^{\infty }\right\Vert
_{\ell ^{\tau ,Y}\left( X\right) }\hspace{-0.3cm} &\leq &\hspace{-0.15cm}%
\left\Vert \left\{ \left\vert x_{j}\right\vert +\left\vert z_{j}\right\vert
\right\} _{j=n+1}^{\infty }\right\Vert _{\ell ^{\tau ,Y}\left( X\right) } \\
&& \\
&\leq &\hspace{-0.15cm}\left\Vert \left\{ \left\vert x_{j}\right\vert
\right\} _{j=n+1}^{\infty }\right\Vert _{\ell ^{\tau ,Y}\left( X\right)
}+\left\Vert \left\{ \left\vert z_{j}\right\vert \right\} _{j=n+1}^{\infty
}\right\Vert _{\ell ^{\tau ,Y}\left( X\right) }\rightarrow 0.
\end{eqnarray*}%
\indent Therefore $\left\{ x_{j}\right\} _{j=1}^{\infty }\vee \left\{
z_{j}\right\} _{j=1}^{\infty }=\left\{ x_{j}\vee z_{j}\right\}
_{j=1}^{\infty }\in \ell _{0}^{\tau ,Y}\left( X\right) $ and consequently $%
\ell _{0}^{\tau ,Y}\left( X\right) $ is a Banach lattice.
\end{proof}

\bigskip

\begin{definition}
Let $V,W$ be linear spaces and $T:V\rightarrow W$ a linear operator. For
each $n\in 
%TCIMACRO{\U{2115} }%
%BeginExpansion
\mathbb{N}
%EndExpansion
$ we define the operator $T_{n}:V^{n}\rightarrow W^{n}$ by%
\begin{equation*}
T_{n}\left( v_{1},...,v_{n}\right) :=\left( Tv_{1},...,Tv_{n}\right)
,\forall v_{1},...,v_{n}\in V.
\end{equation*}%
\indent We also define the \textit{associated operator of }$T,$ $\widetilde{T%
}:V^{%
%TCIMACRO{\U{2115} }%
%BeginExpansion
\mathbb{N}
%EndExpansion
}\rightarrow W^{%
%TCIMACRO{\U{2115} }%
%BeginExpansion
\mathbb{N}
%EndExpansion
}$ by%
\begin{equation}
\widetilde{T}\left( \left\{ v_{j}\right\} _{j=1}^{\infty }\right) =\left\{
Tv_{j}\right\} _{j=1}^{\infty },\forall \left\{ v\right\} _{j=1}^{\infty
}\in E^{%
%TCIMACRO{\U{2115} }%
%BeginExpansion
\mathbb{N}
%EndExpansion
}  \label{2606}
\end{equation}%
\indent Since $T$ is a linear operator it follows that $T_{n}$ and $%
\widetilde{T}$ are linear operators and $\widetilde{T}\left( V_{00}\right)
\subset W_{00}.$
\end{definition}

\begin{lemma}
Let $X$ be a Banach lattice, $E$ a Banach space, $T:E\rightarrow X$ a
continuous linear operator. Then, for each $n\in 
%TCIMACRO{\U{2115} }%
%BeginExpansion
\mathbb{N}
%EndExpansion
$ the operator $T_{n}:\left( E^{n},\left\Vert \cdot \right\Vert
_{E^{n},Y}\right) \rightarrow \left( X^{n},\left\Vert \cdot \right\Vert
_{X^{n},Y,\tau }\right) $ is continuous.\label{nnlema}
\end{lemma}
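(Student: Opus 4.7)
The plan is to directly exhibit a constant $M_n > 0$ such that
$\|T_n w\|_{X^n,Y,\tau} \le M_n \|w\|_{E^n,Y}$
for every $w = (w_1,\ldots,w_n) \in E^n$. By definition, the left-hand side equals $\|\,\|(Tw_1,\ldots,Tw_n)\|_Y\,\|_X$, so I would first apply Lemma \ref{lemalema}(iv) to $(Tw_1,\ldots,Tw_n) \in X^n$. This yields
\begin{equation*}
\|T_n w\|_{X^n,Y,\tau} \le \|e_1+\cdots+e_n\|_{\mathbb{R}^n,Y}\,\Bigl\|\max_{1\le j\le n}|Tw_j|\Bigr\|_X.
\end{equation*}

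Next I would dominate the max by the sum and use the continuity of $T$: since $\max_{1\le j\le n}|Tw_j| \le \sum_{j=1}^n |Tw_j|$ in $X$, and $\|\cdot\|_X$ is a lattice norm,
\begin{equation*}
\Bigl\|\max_{1\le j\le n}|Tw_j|\Bigr\|_X \le \sum_{j=1}^n \|Tw_j\|_X \le \|T\|\sum_{j=1}^n \|w_j\|_E = \|T\|\,\|w\|_{E^n,1}.
\end{equation*}

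The remaining step is to pass from the $\ell^1$-type norm on $E^n$ to $\|\cdot\|_{E^n,Y}$. Because $\|\cdot\|_{\mathbb{R}^n,1}$ and $\|\cdot\|_{\mathbb{R}^n,Y}$ are two norms on the finite-dimensional space $\mathbb{R}^n$, they are equivalent, so there exists $C_n>0$ with $\|(t_1,\ldots,t_n)\|_{\mathbb{R}^n,1}\le C_n\|(t_1,\ldots,t_n)\|_{\mathbb{R}^n,Y}$ for all $t_j\in\mathbb{R}$. Applied to $(\|w_1\|_E,\ldots,\|w_n\|_E)$ this gives $\|w\|_{E^n,1}\le C_n\|w\|_{E^n,Y}$. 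Chaining the three estimates yields
\begin{equation*}
\|T_n w\|_{X^n,Y,\tau} \le \|e_1+\cdots+e_n\|_{\mathbb{R}^n,Y}\,\|T\|\,C_n\,\|w\|_{E^n,Y},
\end{equation*}
so $T_n$ is continuous with operator norm bounded by the product of these three constants.

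There is no real obstacle here: the argument is a direct application of Lemma \ref{lemalema}(iv) (which packages the essential Krivine estimate from Proposition \ref{propkrivi}), followed by the triangle inequality, the continuity of $T$, and the equivalence of norms on the finite-dimensional space $\mathbb{R}^n$. The only point to be slightly careful about is that the constants $\|e_1+\cdots+e_n\|_{\mathbb{R}^n,Y}$ and $C_n$ depend on $n$, but this is fine because the statement only asks for continuity of $T_n$ for each fixed $n$, not uniformity in $n$ (uniformity in $n$ is precisely what $Y$-convexity adds, which is why that is a separate definition).
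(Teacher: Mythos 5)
Your proof is correct. It does, however, take a genuinely different route from the paper's. The paper first bounds $T_{n}$ in the \emph{natural} norm on $X^{n}$: it computes $\left\Vert T_{n}w\right\Vert _{X^{n},Y}=\left\Vert \left( \left\Vert Tw_{1}\right\Vert _{X},...,\left\Vert Tw_{n}\right\Vert _{X}\right) \right\Vert _{Y}\leq \left\Vert T\right\Vert \left\Vert w\right\Vert _{E^{n},Y}$ directly from $\left\Vert Tw_{j}\right\Vert _{X}\leq \left\Vert T\right\Vert \left\Vert w_{j}\right\Vert _{E}$ and the monotonicity of $\left\Vert \cdot \right\Vert _{\mathbb{R}^{n},Y}$, and then passes to the target norm by invoking the equivalence of $\left\Vert \cdot \right\Vert _{X^{n},Y,\tau }$ and $\left\Vert \cdot \right\Vert _{X^{n},Y}$ (both are equivalent to $\left\Vert \cdot \right\Vert _{X^{n},1}$, by Proposition \ref{propoca} and the remark preceding it). You instead estimate $\left\Vert \cdot \right\Vert _{X^{n},Y,\tau }$ from scratch via the Krivine max bound of Lemma \ref{lemalema}(iv), domination of the max by the sum, continuity of $T$, and the equivalence of norms on the finite-dimensional space $\mathbb{R}^{n}$; in effect you unfold the content of Proposition \ref{propoca} inside the proof rather than citing it. The paper's version is shorter given its standing toolkit and yields the cleaner intermediate estimate with constant exactly $\left\Vert T\right\Vert $ in the natural norm; yours is more self-contained and makes the $n$-dependence of the final constant explicit, which, as you correctly observe, is harmless because only continuity of each fixed $T_{n}$ is claimed here, uniformity in $n$ being precisely what $Y$-convexity adds.
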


\begin{proof}
Fix $n\in 
%TCIMACRO{\U{2115} }%
%BeginExpansion
\mathbb{N}
%EndExpansion
$ and let $w_{1},...,w_{n}\in E.$ Then%
\begin{eqnarray*}
\left\Vert T_{n}\left( w_{1},...,w_{n}\right) \right\Vert _{X^{n},Y}\hspace{%
-0.25cm} &=&\hspace{-0.2cm}\left\Vert \left( Tw_{1},...,Tw_{n}\right)
\right\Vert _{X^{n},Y}=\left\Vert \left( \left\Vert Tw_{1}\right\Vert
_{X},...,\left\Vert Tw_{n}\right\Vert _{X}\right) \right\Vert _{Y} \\
&& \\
&\leq &\hspace{-0.2cm}\left\Vert T\right\Vert \left\Vert \left( \left\Vert
w_{1}\right\Vert _{E},...,\left\Vert w_{n}\right\Vert _{E}\right)
\right\Vert _{Y}=\left\Vert T\right\Vert \left\Vert \left(
w_{1},...,w_{n}\right) \right\Vert _{E^{n},Y}.
\end{eqnarray*}%
\indent Since the norms $\left\Vert \cdot \right\Vert _{X^{n},Y,\tau }$ and $%
\left\Vert \cdot \right\Vert _{X^{n},Y}$ are continuous it follows that $%
T_{n}$ is a continuous operator.
\end{proof}

\begin{proposition}
Let $X$ be a Banach lattice and $E$ a Banach space. Then$\label{propo190}$%
\newline
\indent$i)$ A linear operator $T:E\rightarrow X$ is\textit{\ }$Y$-convex if
and only if $\widetilde{T}:\ell _{00}^{Y}\left( E\right) \rightarrow \ell
_{00}^{\tau ,Y}\left( X\right) $ is bounded, in which case $\left\Vert 
\widetilde{T}\right\Vert =\left\Vert T\right\Vert _{\mathcal{K}%
^{Y}}.\smallskip $\newline
\indent$ii)$ A linear operator $S:X\rightarrow E$ is $Y$-concave\ if and
only if $\widetilde{S}:\ell _{00}^{\tau ,Y}\left( X\right) \rightarrow \ell
_{00}^{Y}\left( E\right) $ is bounded, in which case $\left\Vert \widetilde{S%
}\right\Vert =\left\Vert S\right\Vert _{\mathcal{K}_{Y}}.$\newline
\end{proposition}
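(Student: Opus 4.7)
The proof is essentially a translation between the ``finite'' formulation of $Y$-convexity/$Y$-concavity and the sequence-space formulation, via the isometric embeddings $E^{n}\hookrightarrow \ell_{00}^{Y}(E)$ and $X^{n}\hookrightarrow \ell_{00}^{\tau,Y}(X)$ that have already been established in (\ref{e}) and (\ref{inciso}). The plan is to prove $(i)$ in detail; part $(ii)$ is entirely symmetric, interchanging the roles of the two sequence spaces.

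For $(i)$, first I would observe the two crucial identifications. Any $w \in \ell_{00}^{Y}(E)$ can be written as $w=(w_{1},\ldots,w_{n},0,0,\ldots)$ for some $n\in\mathbb{N}$, and then by definition of the norm in $\ell^{Y}(E)$, $\|w\|_{\ell^{Y}(E)} = \|(w_{1},\ldots,w_{n})\|_{E^{n},Y}$. Similarly, using property $(iii)$ stated right after Proposition~\ref{propoca}, the supremum defining $\|\widetilde{T}w\|_{\ell^{\tau,Y}(X)}$ stabilizes at $n$, so that $\|\widetilde{T}w\|_{\ell^{\tau,Y}(X)} = \|(Tw_{1},\ldots,Tw_{n})\|_{X^{n},Y,\tau}$. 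With these identifications, the $Y$-convexity inequality (\ref{20073}) becomes literally the bound $\|\widetilde{T}w\|_{\ell^{\tau,Y}(X)}\leq C\|w\|_{\ell^{Y}(E)}$ for every $w\in \ell_{00}^{Y}(E)$.

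For the forward direction, assume $T$ is $Y$-convex. Given $w\in\ell_{00}^{Y}(E)$, apply (\ref{20073}) with constant $\|T\|_{\mathcal{K}^{Y}}$ to the finitely many nonzero entries and use the two identifications above to conclude $\|\widetilde{T}w\|_{\ell^{\tau,Y}(X)}\leq \|T\|_{\mathcal{K}^{Y}}\|w\|_{\ell^{Y}(E)}$. This shows $\widetilde{T}$ is bounded with $\|\widetilde{T}\|\leq \|T\|_{\mathcal{K}^{Y}}$. Conversely, if $\widetilde{T}$ is bounded, pick any $n\in\mathbb{N}$ and $w_{1},\ldots,w_{n}\in E$ and form the eventually zero sequence $w=(w_{1},\ldots,w_{n},0,\ldots)\in\ell_{00}^{Y}(E)$. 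The same identifications turn the bound $\|\widetilde{T}w\|_{\ell^{\tau,Y}(X)}\leq \|\widetilde{T}\|\|w\|_{\ell^{Y}(E)}$ into (\ref{20073}) with constant $\|\widetilde{T}\|$, so $T$ is $Y$-convex with $\|T\|_{\mathcal{K}^{Y}}\leq \|\widetilde{T}\|$. Combining the two inequalities gives $\|\widetilde{T}\|=\|T\|_{\mathcal{K}^{Y}}$.

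There is really no obstacle here once the isometric embeddings and the stabilization property $(iii)$ are in hand; the only thing to be careful about is that the norm on $\ell_{00}^{\tau,Y}(X)$ is defined as a supremum over all $n$, so one must invoke property $(iii)$ to see that this supremum is attained at the last nonzero index of an eventually zero sequence. Part $(ii)$ proceeds identically: for $S:X\to E$ and any $x\in\ell_{00}^{\tau,Y}(X)$ with support in $\{1,\ldots,n\}$, the identifications give $\|\widetilde{S}x\|_{\ell^{Y}(E)}=\|(Sx_{1},\ldots,Sx_{n})\|_{E^{n},Y}$ and $\|x\|_{\ell^{\tau,Y}(X)}=\|(x_{1},\ldots,x_{n})\|_{X^{n},Y,\tau}$, so that (\ref{20074}) is exactly the boundedness of $\widetilde{S}$ with equal constants.
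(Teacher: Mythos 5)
Your proposal is correct and follows essentially the same route as the paper: both directions reduce to the identification of the $\ell_{00}^{Y}(E)$ and $\ell_{00}^{\tau,Y}(X)$ norms of an eventually zero sequence with the corresponding finite-dimensional norms $\left\Vert \cdot \right\Vert _{E^{n},Y}$ and $\left\Vert \cdot \right\Vert _{X^{n},Y,\tau}$ (the stabilization of the supremum that you justify via property $(iii)$ is stated explicitly in the paper right after $\left( \ref{inciso}\right) $), after which the $Y$-convexity inequality and the boundedness of $\widetilde{T}$ become the same statement with the same constants.
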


\begin{proof}
Let $T:E\rightarrow X$ be a\textit{\ }$Y$-convex operator. Take $\left\{
w_{j}\right\} _{j=1}^{\infty }\in \ell _{00}^{Y}\left( E\right) $ and let $%
n\in 
%TCIMACRO{\U{2115} }%
%BeginExpansion
\mathbb{N}
%EndExpansion
$ be such that $w_{j}=0,j>n.$ Then%
\begin{eqnarray*}
\left\Vert \widetilde{T}\left( \left\{ w_{j}\right\} _{j=1}^{\infty }\right)
\right\Vert _{\ell ^{\tau ,Y}\left( X\right) }\hspace{-0.2cm} &=&\left\Vert
\left\{ Tw_{j}\right\} _{j=1}^{\infty }\right\Vert _{\ell ^{\tau ,Y}\left(
X\right) }=\left\Vert \left( Tw_{1},...,Tw_{n}\right) \right\Vert
_{X^{n},h_{n,Y}} \\
&\leq &\left\Vert T\right\Vert _{\mathcal{K}^{Y}}\left\Vert \left(
w_{1},...w_{n}\right) \right\Vert _{E^{n},Y}=\left\Vert T\right\Vert _{%
\mathcal{K}^{Y}}\left\Vert \left\{ w_{j}\right\} _{j=1}^{\infty }\right\Vert
_{\ell ^{Y}\left( E\right) }.
\end{eqnarray*}%
\indent Thus $\widetilde{T}:\ell _{00}^{Y}\left( E\right) \rightarrow \ell
_{00}^{\tau ,Y}\left( X\right) $ is bounded and 
\begin{equation}
\left\Vert \widetilde{T}\right\Vert \leq \left\Vert T\right\Vert _{\mathcal{K%
}^{Y}}.  \label{de2}
\end{equation}%
\indent On the other hand let $T:E\rightarrow X$ be a linear operator with
bounded associated operator $\widetilde{T}:\ell _{00}^{Y}\left( E\right)
\rightarrow \ell _{00}^{\tau ,Y}\left( X\right) $. Then, for every $n\in 
%TCIMACRO{\U{2115} }%
%BeginExpansion
\mathbb{N}
%EndExpansion
$ and $w_{1},...,w_{n}\in E$%
\begin{eqnarray*}
\left\Vert \left( Tw_{1},...,Tw_{n}\right) \right\Vert _{X^{n},Y,\tau }
&=&\left\Vert \left\{ Tw_{j}\right\} _{j=1}^{\infty }\right\Vert _{\ell
^{\tau ,Y}\left( X\right) }=\left\Vert \widetilde{T}\left( \left\{
w_{j}\right\} _{j=1}^{\infty }\right) \right\Vert _{\ell ^{\tau ,Y}\left(
X\right) } \\
&\leq &\left\Vert \widetilde{T}\right\Vert \left\Vert \left\{ w_{j}\right\}
_{j=1}^{\infty }\right\Vert _{\ell ^{Y}\left( E\right) }=\left\Vert 
\widetilde{T}\right\Vert \left\Vert \left( w_{1},...w_{n}\right) \right\Vert
_{E^{n},Y}.
\end{eqnarray*}%
\indent Thus $T$ is $Y$-convex and 
\begin{equation}
\left\Vert \widetilde{T}\right\Vert \geq \left\Vert T\right\Vert _{\mathcal{K%
}^{Y}}.\newline
\label{de1}
\end{equation}%
\newline
\indent It follows by inequalities $\left( \ref{de1}\right) $ and $\left( %
\ref{de2}\right) $ that $\left\Vert \widetilde{T}\right\Vert =\left\Vert
T\right\Vert _{\mathcal{K}^{Y}}.$\newline
\newline
\indent$ii)$ This proof is analogous to $i)$.
\end{proof}

\bigskip

Next, let $T\in \mathcal{K}^{Y}\left( E,X\right) .$ Since $\ell _{0}^{\tau
,Y}\left( X\right) =\overline{\ell _{00}^{\tau ,Y}\left( X\right) }$ is
complete, the associated operator $\widetilde{T}:\ell _{00}^{Y}\left(
E\right) \rightarrow \ell _{00}^{\tau ,Y}\left( X\right) $ can be linearly
extended to $\ \overline{\ell _{00}^{Y}\left( E\right) }=\ell _{0}^{Y}\left(
E\right) .$ Let us denote such extension by $R:\ell _{0}^{Y}\left( E\right)
\rightarrow _{0}^{\tau ,Y}\left( X\right) .$ Let $w=\left\{ w_{j}\right\}
_{j=1}^{\infty }\in \ell _{0}^{Y}\left( E\right) $ and take $\left\{
w^{m}\right\} _{m=1}^{\infty }\subset \ell _{00}^{Y}\left( E\right) $ such
that $w^{m}\rightarrow w,$where%
\begin{equation*}
w^{m}:=\left\{ w_{j}^{m}\right\} _{j=1}^{\infty }\in \ell _{00}^{Y}\left(
E\right) ,\forall m\in 
%TCIMACRO{\U{2115} }%
%BeginExpansion
\mathbb{N}
%EndExpansion
.
\end{equation*}%
\indent Observe that 
\begin{equation*}
Rw=\lim_{m\rightarrow \infty }Rw^{m}=\lim_{m\rightarrow \infty }\left\{
Tw_{j}^{m}\right\} _{j=1}^{\infty }
\end{equation*}%
and define $z:=Rw.$ Since the convergence in $\ell _{0}^{\tau ,Y}\left(
X\right) $ imply the convergence in each component we have that 
\begin{equation*}
\left\Vert Tw_{j}^{m}-z_{j}\right\Vert _{X}\rightarrow 0,\forall j\in 
%TCIMACRO{\U{2115} }%
%BeginExpansion
\mathbb{N}
%EndExpansion
.
\end{equation*}%
\indent Then $z_{j}=\lim_{m\rightarrow \infty }Tw_{j}^{m}=T\left(
\lim_{m\rightarrow \infty }w_{j}^{m}\right) =Tw_{j}$ and consequently%
\begin{equation*}
R\left\{ w_{j}\right\} _{j=1}^{\infty }=\left\{ Tw_{j}\right\}
_{j=1}^{\infty },\forall \left\{ w_{j}\right\} _{j=1}^{\infty }\in \ell
_{0}^{Y}\left( E\right) .
\end{equation*}%
\indent Therefore we conclude that the linear extension of $\widetilde{T}$
can be expressed in the same way as $\left( \ref{2606}\right) .$
Analogously, for a $Y$-concave operatror $S:X\rightarrow E$, the associated
operator $\widetilde{S}:\ell _{00}^{\tau ,Y}\left( X\right) \rightarrow \ell
_{0}^{Y}\left( E\right) $ can be linearly extended to $\ell _{0}^{\tau
,Y}\left( X\right) .$ Thus, the following result holds:

\begin{theorem}
Let $X$ be a Banach lattice and $E$ a Banach space. Then\label{propo1902}%
\newline
\indent$i)$ A linear operator $T:E\rightarrow X$ is\textit{\ }$Y$-convex if
and only if $\widetilde{T}:\ell _{0}^{Y}\left( E\right) \rightarrow \ell
_{0}^{\tau ,Y}\left( X\right) $ is bounded, in which case $\left\Vert 
\widetilde{T}\right\Vert =\left\Vert T\right\Vert _{\mathcal{K}%
^{Y}}.\smallskip $\newline
\indent$ii)$ A linear operator $S:X\rightarrow E$ is $Y$-concave\ if and
only if $\widetilde{S}:\ell _{0}^{\tau ,Y}\left( X\right) \rightarrow \ell
_{0}^{Y}\left( E\right) $ is bounded, in which case $\left\Vert \widetilde{S}%
\right\Vert =\left\Vert S\right\Vert _{\mathcal{K}_{Y}}.$
\end{theorem}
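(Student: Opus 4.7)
My plan is to reduce Theorem \ref{propo1902} to Proposition \ref{propo190} by a standard density/extension argument. The essential ingredients are already in place: $\ell_{00}^{Y}(E)$ is dense in $\ell_0^{Y}(E)$ by definition, $\ell_0^{Y}(E)$ is complete by Theorem \ref{compl2}, and the target space $\ell_0^{\tau ,Y}(X)$ is complete by Theorem \ref{compl}. Hence any bounded linear operator from $\ell_{00}^{Y}(E)$ into $\ell_0^{\tau ,Y}(X)$ admits a unique bounded linear extension to $\ell_0^{Y}(E)$, of the same norm.

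For part $i)$, I would first assume $T:E\to X$ is $Y$-convex. Proposition \ref{propo190} then yields that $\widetilde{T}$ restricted to $\ell_{00}^{Y}(E)$ takes values in $\ell_{00}^{\tau ,Y}(X)\hookrightarrow \ell_0^{\tau ,Y}(X)$ and is bounded with operator norm exactly $\|T\|_{\mathcal{K}^Y}$. Applying the extension principle just recalled produces a bounded linear map from $\ell_0^{Y}(E)$ into $\ell_0^{\tau ,Y}(X)$ of the same norm. The one point needing checking is that this abstractly obtained extension still equals the concrete componentwise formula $\{w_j\}\mapsto\{Tw_j\}$; but this is precisely what the paragraph immediately preceding the theorem verifies: if $w^m\to w$ in $\ell_0^{Y}(E)$ with $w^m\in\ell_{00}^{Y}(E)$, then $\widetilde{T}w^m\to Rw$ in $\ell_0^{\tau ,Y}(X)$, and by the coordinate-wise convergence noted after \eqref{inciso} together with continuity of $T$, the $j$-th coordinate of $Rw$ equals $\lim_m T w_j^m=T w_j$.

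For the converse, if $\widetilde{T}:\ell_0^{Y}(E)\to\ell_0^{\tau ,Y}(X)$ is bounded, then its restriction to $\ell_{00}^{Y}(E)$ is bounded with the same norm (by density), which by Proposition \ref{propo190} forces $T$ to be $Y$-convex and gives $\|T\|_{\mathcal{K}^Y}=\|\widetilde{T}\|$. Part $ii)$ is obtained by the same argument, exchanging the roles of $\ell_0^{Y}(E)$ and $\ell_0^{\tau ,Y}(X)$ and invoking the concavity half of Proposition \ref{propo190}.

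\textbf{Main obstacle.} The proof requires no new inequalities; it is a bookkeeping stitch between Proposition \ref{propo190} and the extension-by-density machinery. The only genuinely non-formal step is the identification of the abstract continuous extension with the component-wise formula for $\widetilde{T}$, and since that identification has already been carried out verbatim in the paragraph preceding the theorem, the proof itself reduces to a few lines that cite Proposition \ref{propo190} and the completeness results Theorems \ref{compl2} and \ref{compl}.
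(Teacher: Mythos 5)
Your proposal is correct and follows essentially the same route as the paper: the paragraph preceding the theorem carries out exactly this density/extension argument from Proposition \ref{propo190}, including the verification that the abstract extension agrees with the componentwise formula, and the theorem is then stated as its consequence. Nothing further is needed.
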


\begin{proposition}
Let $X$ be a Banach lattice, $E$ a Banach space and $T:E\rightarrow X$ a
linear operator. If $T\in \mathcal{L}\left( E,X\right) $ and $\widetilde{T}%
\left( \ell _{0}^{Y}\left( E\right) \right) \subset \ell _{0}^{\tau
,Y}\left( X\right) ,$ then $\widetilde{T}:\ell _{0}^{Y}\left( E\right)
\rightarrow \ell _{0}^{\tau ,Y}\left( X\right) $ is bounded. In consequence $%
T\in \mathcal{K}^{Y}\left( E,X\right) .$
\end{proposition}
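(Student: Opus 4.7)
The plan is to invoke the Closed Graph Theorem. By Theorem \ref{compl2} the space $\ell_0^Y(E)$ is a Banach space, and by Theorem \ref{compl} the space $\ell_0^{\tau,Y}(X)$ is a Banach space. The operator $\widetilde{T}:\ell_0^Y(E)\rightarrow \ell_0^{\tau,Y}(X)$ is linear (it acts componentwise by the continuous linear $T$), so once its graph is shown to be closed, its boundedness will follow from the Closed Graph Theorem, and then $T\in\mathcal{K}^Y(E,X)$ is immediate from Theorem \ref{propo1902}.

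To verify closedness of the graph, I would take a sequence $\{w^{m}\}_{m=1}^\infty\subset \ell_0^Y(E)$ with $w^{m}\rightarrow w$ in $\ell_0^Y(E)$ and $\widetilde{T}(w^{m})\rightarrow z$ in $\ell_0^{\tau,Y}(X)$, and show that $z=\widetilde{T}(w)$. The two key facts to be used are those recorded after Theorem \ref{compl2} and the display \eqref{inciso}: convergence in $\ell^Y(E)$ implies convergence in $E$ in each coordinate, and convergence in $\ell^{\tau,Y}(X)$ implies convergence in $X$ in each coordinate. Writing $w^{m}=\{w_j^{m}\}_{j=1}^\infty$, $w=\{w_j\}_{j=1}^\infty$, and $z=\{z_j\}_{j=1}^\infty$, these facts give $w_j^{m}\rightarrow w_j$ in $E$ and $Tw_j^{m}\rightarrow z_j$ in $X$ for each $j\in\mathbb{N}$.

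Since $T\in\mathcal{L}(E,X)$, continuity of $T$ yields $Tw_j^{m}\rightarrow Tw_j$ in $X$ for every $j$. By uniqueness of limits in $X$ we conclude $z_j=Tw_j$ for all $j$, that is, $z=\widetilde{T}(w)$, which is precisely the element of $\ell_0^{\tau,Y}(X)$ prescribed by the hypothesis $\widetilde{T}(\ell_0^Y(E))\subset \ell_0^{\tau,Y}(X)$. Hence the graph of $\widetilde{T}$ is closed.

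The step that requires a little care, rather than any genuine difficulty, is being explicit that the hypothesis $\widetilde{T}(\ell_0^Y(E))\subset \ell_0^{\tau,Y}(X)$ is what legitimizes viewing $\widetilde{T}$ as an operator into $\ell_0^{\tau,Y}(X)$ in the first place (otherwise one could not even speak of its graph in $\ell_0^Y(E)\times \ell_0^{\tau,Y}(X)$). Once the graph is closed, the Closed Graph Theorem gives boundedness of $\widetilde{T}:\ell_0^Y(E)\rightarrow \ell_0^{\tau,Y}(X)$, and applying Theorem \ref{propo1902}$(i)$ yields $T\in\mathcal{K}^Y(E,X)$ with $\|T\|_{\mathcal{K}^Y}=\|\widetilde{T}\|$, completing the proof.
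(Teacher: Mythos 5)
Your argument is correct and follows essentially the same route as the paper: both verify closedness of the graph of $\widetilde{T}$ by passing to componentwise convergence in $E$ and in $X$, using the continuity of $T$ and uniqueness of limits to identify the limit as $\widetilde{T}(w)$, and then conclude via the Closed Graph Theorem and Theorem \ref{propo1902}. No gaps.
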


\begin{proof}
Let $\left\{ w_{n}\right\} _{n=1}^{\infty }\subset \ell _{0}^{Y}\left(
E\right) $ and $s=\left\{ s_{j}\right\} _{j=1}^{\infty }\in \ell
_{0}^{Y}\left( E\right) $ such that $\left\Vert w_{n}-s\right\Vert _{\ell
^{Y}\left( E\right) }$\linebreak $\rightarrow 0,$ where $w_{n}=\left\{
w_{n,j}\right\} _{j=1}^{\infty },n\in 
%TCIMACRO{\U{2115} }%
%BeginExpansion
\mathbb{N}
%EndExpansion
$. Assume that $\left\{ \widetilde{T}\left( w_{n}\right) \right\}
_{n=1}^{\infty }$ converges to $y=\left\{ y_{j}\right\} _{j=1}^{\infty }\in
\ell ^{\tau ,Y}\left( X\right) .$ Now we will prove that $y=\widetilde{T}%
\left( s\right) $ and consequently, by the closed graph theorem, that $%
\widetilde{T}:\ell _{0}^{Y}\left( E\right) \rightarrow \ell _{0}^{\tau
,Y}\left( X\right) $ is bounded. \newline
\newline
\indent Since $\left\{ \widetilde{T}\left( w_{n}\right) \right\}
_{n=1}^{\infty }$ and $\left\{ w_{n}\right\} _{n=1}^{\infty }$ converges in
each component to $y$ and $s,$ respectively, we have that%
\begin{equation}
\left\Vert T\left( w_{n,j}\right) -y_{j}\right\Vert _{X}\rightarrow
_{n\rightarrow \infty }0,\forall j\in 
%TCIMACRO{\U{2115} }%
%BeginExpansion
\mathbb{N}
%EndExpansion
\label{final}
\end{equation}%
and%
\begin{equation*}
\left\Vert w_{n,j}-s_{j}\right\Vert _{E}\rightarrow _{n\rightarrow \infty
}0,\forall j\in 
%TCIMACRO{\U{2115} }%
%BeginExpansion
\mathbb{N}
%EndExpansion
.
\end{equation*}%
\indent Then%
\begin{equation}
\left\Vert T\left( w_{n,j}\right) -T\left( s_{j}\right) \right\Vert
_{X}=\left\Vert T\left( w_{n,j}-s_{j}\right) \right\Vert _{X}\leq \left\Vert
T\right\Vert \left\Vert w_{n,j}-s_{j}\right\Vert _{E}\rightarrow
_{n\rightarrow \infty }0,\forall j\in 
%TCIMACRO{\U{2115} }%
%BeginExpansion
\mathbb{N}
%EndExpansion
.  \label{final2}
\end{equation}%
\indent So, by $\left( \ref{final}\right) $ and $\left( \ref{final2}\right) $
it follows that $y_{j}=T\left( s_{j}\right) ,j\in 
%TCIMACRO{\U{2115} }%
%BeginExpansion
\mathbb{N}
%EndExpansion
.$ Thus $y=\widetilde{T}\left( s\right) $ and consequently $\widetilde{T}%
:\ell _{0}^{Y}\left( E\right) \rightarrow \ell _{0}^{\tau ,Y}\left( X\right) 
$ is bounded. Finally by the above theorem $T\in \mathcal{K}^{Y}\left(
E,X\right) .$
\end{proof}

\bigskip

In a similar way, if $S\in \mathcal{L}\left( X,E\right) $ and $\widetilde{S}%
\left( \ell _{0}^{\tau ,Y}\left( X\right) \right) \subset \ell
_{0}^{Y}\left( E\right) ,$ then $\widetilde{S}:\ell _{0}^{\tau ,Y}\left(
X\right) \rightarrow \ell _{0}^{Y}\left( E\right) $ is bounded and $S\in 
\mathcal{K}_{Y}\left( X,E\right) .$

\section{The dual space of $\ell _{0}^{Y}\left( E\right) $}

Recall $E,F$ are Banach spaces, $X,W,Z$ are Banach lattices and $Y$ is a
Banach sequence lattice.

\begin{definition}
Let $\left( \Omega ,\Sigma ,\mu \right) $ be a measure space. A $\mu $%
-B.f.s. is said to be \textit{saturated} if, for every $A\in \Sigma $ of
positive measure$,$ there exists a measurable set $B\subset A$ such that $%
\mu \left( B\right) >0$ and $\chi _{_{B}}\in V.$
\end{definition}

Remember that given a measure space $\left( \Omega ,\Sigma ,\mu \right) $
and a saturated $\mu $-B.f.s. $V\subset L^{0}(\mu )$ the \textit{associated K%
\"{o}the space }$V^{\times }$ is defined by 
\begin{equation*}
V^{\times }:=\left\{ g\in L^{0}(\mu ):g\cdot f\in L^{1}(\mu ),\forall f\in
V\right\}
\end{equation*}%
\indent Clearly $V^{\times }$ is a linear space and, since $V$ is saturated,
the function $\left\Vert \cdot \right\Vert _{V^{\times }}:V^{\times
}\rightarrow 
%TCIMACRO{\U{211d} }%
%BeginExpansion
\mathbb{R}
%EndExpansion
$ given by%
\begin{equation*}
\text{ }\left\Vert g\right\Vert _{V^{\times }}:=\sup \left\{ \int_{\Omega
}\left\vert gf\right\vert d\mu :f\in B_{V}\right\} <\infty ,
\end{equation*}%
is a norm on $V^{\times }$.\newline
\newline
\indent Observe that a Banach sequence lattice $Y$ is a saturated $\mu $%
-B.f.s.. In this case we have that%
\begin{equation*}
Y^{\times }:=\left\{ \left\{ \beta _{n}\right\} _{n=1}^{\infty }\in 
%TCIMACRO{\U{211d} }%
%BeginExpansion
\mathbb{R}
%EndExpansion
^{%
%TCIMACRO{\U{2115} }%
%BeginExpansion
\mathbb{N}
%EndExpansion
}:\sum_{n=1}^{\infty }\left\vert \alpha _{n}\beta _{n}\right\vert <\infty
,\forall \left\{ \alpha _{n}\right\} _{n=1}^{\infty }\in Y\right\}
\end{equation*}%
and, for each $\left\{ \beta _{n}\right\} _{n=1}^{\infty }\in Y^{\times },$ 
\begin{equation*}
\left\Vert \left\{ \beta _{n}\right\} _{n=1}^{\infty }\right\Vert
_{Y^{\times }}:=\sup \left\{ \sum_{n=1}^{\infty }\left\vert \alpha _{n}\beta
_{n}\right\vert :\left\{ \alpha _{n}\right\} _{n=1}^{\infty }\in
B_{Y}\right\} <\infty .
\end{equation*}

Recall a Banach lattice $X$ has the $\sigma $\textit{-Fatou property }if
each increasing sequence $\left\{ f_{n}\right\} _{n=1}^{\infty }\subset
X^{+} $ such that $\sup_{n\in 
%TCIMACRO{\U{2115} }%
%BeginExpansion
\mathbb{N}
%EndExpansion
}\left\Vert f_{n}\right\Vert _{X}<\infty ,$ satisfies $f=\sup_{n\in 
%TCIMACRO{\U{2115} }%
%BeginExpansion
\mathbb{N}
%EndExpansion
}f_{n}\in X$ and $\left\Vert f\right\Vert _{X}=\sup_{n\in 
%TCIMACRO{\U{2115} }%
%BeginExpansion
\mathbb{N}
%EndExpansion
}\left\Vert f_{n}\right\Vert _{X}.$

\begin{proposition}
Let $Y$ be a Banach sequence lattice. Then $Y^{\times }$ is a Banach
sequence lattice with the $\sigma $-Fatou property.\label{propo1810}
\end{proposition}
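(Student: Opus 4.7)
The plan is to verify each of the defining properties of a Banach sequence lattice for $Y^{\times }$ and then the $\sigma $-Fatou property. To see that $Y^{\times }$ is an ideal, suppose $g\in L^{0}(\mu _{\#})$ and $h\in Y^{\times }$ with $|g|\leq |h|$. Then for every $\alpha \in Y$, $|g\alpha |\leq |h\alpha |\in \ell ^{1}$, so $g\in Y^{\times }$, and taking suprema over $\alpha \in B_{Y}$ shows $\|g\|_{Y^{\times }}\leq \|h\|_{Y^{\times }}$, which simultaneously gives the ideal property and that $\|\cdot \|_{Y^{\times }}$ is order-preserving. The triangle inequality and absolute homogeneity follow from linearity inside the supremum. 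For positive definiteness, and for the canonical sequences $e_{k}\in Y^{\times }$, I would use the following observation: since $|\alpha _{k}|e_{k}\leq |\alpha |$ componentwise, the order-preserving norm of $Y$ gives $|\alpha _{k}|\|e_{k}\|_{Y}\leq \|\alpha \|_{Y}$, hence evaluation at $k$ is bounded on $Y$ with $\|e_{k}\|_{Y^{\times }}\leq 1/\|e_{k}\|_{Y}<\infty $. Positive definiteness follows because $\|g\|_{Y^{\times }}=0$ forces $g_{k}\alpha _{k}=0$ for every $\alpha \in B_{Y}$ and every $k$, hence $g_{k}=0$ for all $k$ (choosing $\alpha =e_{k}/\|e_{k}\|_{Y}$).

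The main technical step is completeness. Let $\{g^{n}\}\subset Y^{\times }$ be Cauchy. By the bound $|g_{k}^{n}-g_{k}^{m}|\leq \|g^{n}-g^{m}\|_{Y^{\times }}/\|e_{k}\|_{Y}$ obtained by evaluating the defining supremum at $\alpha =e_{k}/\|e_{k}\|_{Y}$, every coordinate sequence $\{g_{k}^{n}\}_{n}$ is Cauchy in $\mathbb{R}$; set $g_{k}:=\lim _{n}g_{k}^{n}$ and $g:=\{g_{k}\}$. Fix $\alpha \in B_{Y}$. For every $n,m\geq N$ and every finite index set $F$,
\begin{equation*}
\sum _{k\in F}|\alpha _{k}(g_{k}^{n}-g_{k}^{m})|\leq \|g^{n}-g^{m}\|_{Y^{\times }}<\epsilon ;
\end{equation*}
letting $m\rightarrow \infty $ coordinatewise and then taking the supremum over finite $F$ gives $\sum _{k}|\alpha _{k}(g_{k}^{n}-g_{k})|\leq \epsilon $. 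This is essentially Fatou's lemma for the counting measure. The same argument applied to $g^{n}$ alone yields $g\in Y^{\times }$ with $\|g\|_{Y^{\times }}\leq \liminf _{n}\|g^{n}\|_{Y^{\times }}$, and taking the supremum over $\alpha \in B_{Y}$ in the displayed inequality shows $\|g^{n}-g\|_{Y^{\times }}\leq \epsilon $ for $n\geq N$.

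For the $\sigma $-Fatou property, let $\{f_{n}\}\subset (Y^{\times })^{+}$ be increasing with $M:=\sup _{n}\|f_{n}\|_{Y^{\times }}<\infty $. Define $f:=\sup _{n}f_{n}$ pointwise; monotonicity guarantees $f\in \mathbb{R}^{\mathbb{N}}$ is well-defined as an extended-real sequence. For any $\alpha \in B_{Y}$, monotone convergence for sums gives
\begin{equation*}
\sum _{k}|\alpha _{k}|f(k)=\lim _{n}\sum _{k}|\alpha _{k}|f_{n}(k)\leq M,
\end{equation*}
so in particular $f(k)<\infty $ for every $k$ with $\alpha _{k}\neq 0$, and saturation (through the $e_{k}\in Y$) forces $f\in \mathbb{R}^{\mathbb{N}}$. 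Taking the supremum over $\alpha \in B_{Y}$ shows $f\in Y^{\times }$ and $\|f\|_{Y^{\times }}\leq M$. The converse inequality $\sup _{n}\|f_{n}\|_{Y^{\times }}\leq \|f\|_{Y^{\times }}$ follows from $0\leq f_{n}\leq f$ and the order-preserving property of the norm, yielding equality.

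I do not expect a serious obstacle here; the results are standard for Köthe duals. The only mildly delicate point is to justify the passage to the limit in the defining supremum during the completeness proof, which is handled by truncating to finite index sets and then invoking monotone convergence on the counting measure—exactly the role played by the $\sigma $-Fatou property one is simultaneously proving.
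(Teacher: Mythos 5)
The paper gives no proof of Proposition \ref{propo1810}; it is stated as a known fact about K\"othe duals, with the finiteness of the supremum defining $\left\Vert \cdot \right\Vert _{Y^{\times }}$ already folded into the preceding recollection of the associated K\"othe space of a saturated B.f.s. Your verification is the standard one and is essentially correct: the ideal property, the role of $e_{k}/\left\Vert e_{k}\right\Vert _{Y}$ in getting $e_{k}\in Y^{\times }$ and positive definiteness, the truncate-to-finite-$F$-and-pass-to-the-limit completeness argument, and the monotone-convergence proof of the $\sigma $-Fatou property are all sound. Two small points. First, your coordinatewise bound has the constant inverted: testing against $\alpha =e_{k}/\left\Vert e_{k}\right\Vert _{Y}$ gives $\left\vert g_{k}\right\vert \leq \left\Vert e_{k}\right\Vert _{Y}\left\Vert g\right\Vert _{Y^{\times }}$, not $\left\Vert g\right\Vert _{Y^{\times }}/\left\Vert e_{k}\right\Vert _{Y}$; this is harmless since all you need is that each coordinate sequence is Cauchy. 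Second, you never establish that $\left\Vert g\right\Vert _{Y^{\times }}<\infty $ for an arbitrary $g\in Y^{\times }$ (membership in $Y^{\times }$ only says each sum $\sum_{n}\left\vert \alpha _{n}g_{n}\right\vert $ is finite, not that the supremum over $B_{Y}$ is). This is the one place a genuine theorem is needed: the multiplication operator $\alpha \mapsto g\cdot \alpha $ from $Y$ to $\ell ^{1}$ is everywhere defined and has closed graph (by Theorem \ref{convmedida}, norm convergence in $Y$ and in $\ell ^{1}$ both pass to pointwise convergence along a subsequence), hence is bounded, which is exactly the finiteness of $\left\Vert g\right\Vert _{Y^{\times }}$. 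Since the paper assumes this as part of the definition of $V^{\times }$, the omission does not undermine the proposition, but it should be flagged if one claims to verify all the defining properties from scratch.
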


\begin{proposition}
$\left( \text{H\"{o}lder%
%TCIMACRO{\U{b4}}%
%BeginExpansion
\'{}%
%EndExpansion
s inequality}\right) $ Let $Y$ be a Banach sequence lattice. Then, for every 
$\left\{ \alpha _{n}\right\} _{n=1}^{\infty }\in Y$ and $\left\{ \beta
_{n}\right\} _{n=1}^{\infty }\in Y^{\times }$\label{holder}%
\begin{equation}
\sum_{n=1}^{\infty }\left\vert \alpha _{n}\beta _{n}\right\vert \leq
\left\Vert \left\{ \alpha _{n}\right\} _{n=1}^{\infty }\right\Vert _{%
%TCIMACRO{\U{211d} }%
%BeginExpansion
\mathbb{R}
%EndExpansion
^{n},Y}\left\Vert \left\{ \beta _{n}\right\} _{n=1}^{\infty }\right\Vert _{%
%TCIMACRO{\U{211d} }%
%BeginExpansion
\mathbb{R}
%EndExpansion
^{n},Y^{\times }}.  \label{500}
\end{equation}
\end{proposition}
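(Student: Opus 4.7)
The plan is to derive this Hölder-type inequality directly from the definition of the Köthe dual norm $\|\cdot\|_{Y^{\times}}$, using a simple normalization argument. I first handle the trivial case: if $\|\{\alpha_n\}\|_{Y} = 0$, then $\{\alpha_n\} = 0$ so both sides of $(\ref{500})$ vanish, and the inequality holds. Similarly, if $\{\beta_n\} = 0 \in Y^{\times}$ there is nothing to prove. So from now on I assume $\|\{\alpha_n\}\|_{Y} > 0$.

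Next, I set $\gamma_n := \alpha_n / \|\{\alpha_n\}\|_{Y}$ for each $n \in \mathbb{N}$. Since $Y$ is a Banach sequence lattice (in particular, a normed space closed under scalar multiplication), $\{\gamma_n\}_{n=1}^{\infty} \in Y$ with $\|\{\gamma_n\}\|_{Y} = 1$, so $\{\gamma_n\}_{n=1}^{\infty} \in B_{Y}$. Applying the definition
\begin{equation*}
\|\{\beta_n\}\|_{Y^{\times}} = \sup\left\{\sum_{n=1}^{\infty} |\alpha'_n \beta_n| : \{\alpha'_n\}_{n=1}^{\infty} \in B_{Y}\right\}
\end{equation*}
with the admissible choice $\alpha'_n = \gamma_n$, I obtain
\begin{equation*}
\sum_{n=1}^{\infty} |\gamma_n \beta_n| \leq \|\{\beta_n\}\|_{Y^{\times}}.
\end{equation*}

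Finally, multiplying both sides by $\|\{\alpha_n\}\|_{Y}$ and using $|\alpha_n| = \|\{\alpha_n\}\|_{Y}\, |\gamma_n|$ recovers the desired inequality $(\ref{500})$. This also shows, as a by-product, that $\sum_{n} |\alpha_n \beta_n| < \infty$, which is coherent with the definition of $Y^{\times}$. There is no real obstacle here — the content is essentially the observation that $\|\cdot\|_{Y^{\times}}$ is defined exactly so that Hölder's inequality holds with constant $1$; the only point requiring care is the degenerate case, which is dispatched at the outset.
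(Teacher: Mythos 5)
Your proof is correct. The paper states this proposition without proof (it is recalled as a known fact about K\"othe duals of saturated Banach function spaces), and your normalization argument --- pass to $\gamma_n = \alpha_n/\|\{\alpha_n\}\|_Y$, test it in the supremum defining $\|\{\beta_n\}\|_{Y^{\times}}$, and scale back, with the degenerate case $\|\{\alpha_n\}\|_Y=0$ handled separately --- is exactly the standard one-line derivation from the definition of $\|\cdot\|_{Y^{\times}}$, so there is nothing to object to.
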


\bigskip

Next, given a Banach space $E,$ we will represent the dual space of $\ell
_{0}^{Y}\left( E\right) .$ Fix $s=\left\{ \varphi _{n}\right\}
_{n=1}^{\infty }\in \ell ^{Y^{\times }}\left( E^{\ast }\right) $ and define%
\begin{equation}
\rho _{s}\left( \left\{ w_{n}\right\} _{n=1}^{\infty }\right)
:=\sum_{n=1}^{\infty }\varphi _{n}\left( w_{n}\right) ,\forall \left\{
w_{n}\right\} _{n=1}^{\infty }\in \ell _{0}^{Y}\left( E\right) .
\label{dedede}
\end{equation}%
\indent From H\"{o}lder%
%TCIMACRO{\U{b4}}%
%BeginExpansion
\'{}%
%EndExpansion
s inequality%
\begin{eqnarray}
\sum_{n=1}^{\infty }\left\vert \varphi _{n}\left( w_{n}\right) \right\vert
&\leq &\sum_{n=1}^{\infty }\left\Vert \varphi _{n}\right\Vert _{E^{\ast
}}\left\Vert w_{n}\right\Vert _{E}  \label{wrt} \\
&&  \notag \\
&\leq &\left\Vert \left\{ \left\Vert \varphi _{n}\right\Vert _{E^{\ast
}}\right\} _{n=1}^{\infty }\right\Vert _{Y^{\times }}\left\Vert \left\{
\left\Vert w_{n}\right\Vert _{E}\right\} _{n=1}^{\infty }\right\Vert _{Y} 
\notag \\
&&  \notag \\
&=&\left\Vert \left\{ \varphi _{n}\right\} _{n=1}^{\infty }\right\Vert
_{\ell ^{Y^{\times }}\left( E^{\ast }\right) }\left\Vert \left\{
w_{n}\right\} _{n=1}^{\infty }\right\Vert _{\ell ^{Y}\left( E\right) } 
\notag \\
&&  \notag \\
&=&\left\Vert s\right\Vert _{\ell ^{Y^{\times }}\left( E^{\ast }\right)
}\left\Vert \left\{ w_{n}\right\} _{n=1}^{\infty }\right\Vert _{\ell
^{Y}\left( E\right) }<\infty .  \notag
\end{eqnarray}%
\indent Thus the functional $\rho _{s}:\ell _{0}^{Y}\left( E\right)
\rightarrow 
%TCIMACRO{\U{211d} }%
%BeginExpansion
\mathbb{R}
%EndExpansion
$ is well defined for each $s\in \ell ^{Y^{\times }}\left( E^{\ast }\right) $
and it is continuous. \newline
\newline
\indent Now let us define the operator $\rho :\ell ^{Y^{\times }}\left(
E^{\ast }\right) \rightarrow \ell _{0}^{Y}\left( E\right) ^{\ast }$ by%
\begin{equation}
\rho \left( s\right) :=\rho _{s},\forall s\in \ell ^{Y^{\times }}\left(
E^{\ast }\right) .  \label{line}
\end{equation}%
\indent Observe that $\rho $ is a linear operator and, by $\left( \ref{wrt}%
\right) ,$ 
\begin{equation}
\left\Vert \rho _{s}\right\Vert _{\ell ^{Y}\left( E\right) ^{\ast }}\leq
\left\Vert s\right\Vert _{\ell ^{Y^{\times }}\left( E^{\ast }\right)
},\forall s\in \ell ^{Y^{\times }}\left( E^{\ast }\right) .  \label{iso1}
\end{equation}%
\indent Therefore $\rho $ is bounded and $\left\Vert \rho \right\Vert \leq
1. $

\begin{theorem}
Let $E$ be a Banach space and $\rho :\ell ^{Y^{\times }}\left( E^{\ast
}\right) \rightarrow \ell _{0}^{Y}\left( E\right) ^{\ast }$ the function
defined in $\left( \ref{line}\right) .$ Then $\rho $ is an isometric
isomorphism. We will denote this by\label{tmapen} 
\begin{equation*}
\ell _{0}^{Y}\left( E\right) ^{\ast }=\ell ^{Y^{\times }}\left( E^{\ast
}\right) .
\end{equation*}
\end{theorem}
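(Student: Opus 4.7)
The plan is to verify that $\rho$ is an isometry and that it is surjective, having already shown in the preceding computation that $\rho$ is a well-defined bounded linear operator with $\left\Vert \rho\right\Vert \leq 1$.

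\textbf{Step 1 (Isometry).} Given $s=\{\varphi_{n}\}_{n=1}^{\infty}\in\ell^{Y^{\times}}\left(E^{\ast}\right)$, I would prove the reverse inequality $\left\Vert s\right\Vert _{\ell^{Y^{\times}}\left(E^{\ast}\right)}\leq\left\Vert \rho_{s}\right\Vert _{\ell_{0}^{Y}\left(E\right)^{\ast}}$. Fix $\epsilon>0$. By the definition of $\left\Vert \cdot\right\Vert _{Y^{\times}}$ applied to $\{\left\Vert \varphi_{n}\right\Vert _{E^{\ast}}\}_{n=1}^{\infty}$, there exists a nonnegative sequence $\{\alpha_{n}\}_{n=1}^{\infty}\in B_{Y}$ and an $N\in\mathbb{N}$ such that $\sum_{n=1}^{N}\alpha_{n}\left\Vert \varphi_{n}\right\Vert _{E^{\ast}}\geq\left\Vert s\right\Vert _{\ell^{Y^{\times}}\left(E^{\ast}\right)}-\epsilon$. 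For each $n\leq N$ with $\left\Vert \varphi_{n}\right\Vert _{E^{\ast}}>0$, pick $u_{n}\in E$ with $\left\Vert u_{n}\right\Vert _{E}=1$ and $\varphi_{n}\left(u_{n}\right)\geq\left\Vert \varphi_{n}\right\Vert _{E^{\ast}}-\epsilon/N$, and set $u_{n}=0$ otherwise. Form the eventually zero sequence $w:=\{\alpha_{n}u_{n}\}_{n=1}^{N}\in\ell_{00}^{Y}\left(E\right)\subset\ell_{0}^{Y}\left(E\right)$, which has norm $\left\Vert w\right\Vert _{\ell^{Y}\left(E\right)}=\left\Vert \{\alpha_{n}\}_{n=1}^{N}\right\Vert _{Y}\leq1$. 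Then
\begin{equation*}
\rho_{s}\left(w\right)=\sum_{n=1}^{N}\alpha_{n}\varphi_{n}\left(u_{n}\right)\geq\sum_{n=1}^{N}\alpha_{n}\left\Vert \varphi_{n}\right\Vert _{E^{\ast}}-\epsilon\geq\left\Vert s\right\Vert _{\ell^{Y^{\times}}\left(E^{\ast}\right)}-2\epsilon,
\end{equation*}
which gives $\left\Vert \rho_{s}\right\Vert \geq\left\Vert s\right\Vert _{\ell^{Y^{\times}}\left(E^{\ast}\right)}$ upon letting $\epsilon\downarrow0$.

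\textbf{Step 2 (Surjectivity).} Given $\psi\in\ell_{0}^{Y}\left(E\right)^{\ast}$, define for each $n\in\mathbb{N}$ the canonical embedding $\iota_{n}:E\rightarrow\ell_{0}^{Y}\left(E\right)$ sending $w$ to the sequence with $w$ in position $n$ and $0$ elsewhere, and set $\varphi_{n}:=\psi\circ\iota_{n}\in E^{\ast}$; clearly $\left\Vert \varphi_{n}\right\Vert _{E^{\ast}}\leq\left\Vert \psi\right\Vert \left\Vert e_{n}\right\Vert _{Y}$. I would then show $s:=\{\varphi_{n}\}_{n=1}^{\infty}\in\ell^{Y^{\times}}\left(E^{\ast}\right)$ with $\left\Vert s\right\Vert _{\ell^{Y^{\times}}\left(E^{\ast}\right)}\leq\left\Vert \psi\right\Vert $. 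For any nonnegative $\{\alpha_{n}\}_{n=1}^{\infty}\in B_{Y}$, any finite $N$, any $\epsilon>0$, and any $u_{n}\in E$ with $\left\Vert u_{n}\right\Vert _{E}\leq1$ and $\varphi_{n}\left(u_{n}\right)\geq\left\Vert \varphi_{n}\right\Vert _{E^{\ast}}-\epsilon/N$ for $n\leq N$, the sequence $\{\alpha_{n}u_{n}\}_{n=1}^{N}\in\ell_{00}^{Y}\left(E\right)$ has $\ell^{Y}\left(E\right)$-norm at most $1$, so
\begin{equation*}
\sum_{n=1}^{N}\alpha_{n}\left\Vert \varphi_{n}\right\Vert _{E^{\ast}}\leq\sum_{n=1}^{N}\alpha_{n}\varphi_{n}\left(u_{n}\right)+\epsilon=\psi\bigl(\{\alpha_{n}u_{n}\}_{n=1}^{N}\bigr)+\epsilon\leq\left\Vert \psi\right\Vert +\epsilon.
\end{equation*}
Letting $\epsilon\downarrow0$ and $N\rightarrow\infty$, then taking the supremum over $\{\alpha_{n}\}\in B_{Y}$, yields $s\in\ell^{Y^{\times}}\left(E^{\ast}\right)$ with $\left\Vert s\right\Vert _{\ell^{Y^{\times}}\left(E^{\ast}\right)}\leq\left\Vert \psi\right\Vert $.

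\textbf{Step 3 (Agreement).} For any $w=\{w_{n}\}_{n=1}^{\infty}\in\ell_{00}^{Y}\left(E\right)$, say $w_{n}=0$ for $n>N$, linearity of $\psi$ gives
\begin{equation*}
\psi\left(w\right)=\psi\Bigl(\sum_{n=1}^{N}\iota_{n}\left(w_{n}\right)\Bigr)=\sum_{n=1}^{N}\varphi_{n}\left(w_{n}\right)=\rho_{s}\left(w\right),
\end{equation*}
so $\psi$ and $\rho_{s}$ agree on the dense subspace $\ell_{00}^{Y}\left(E\right)$ of $\ell_{0}^{Y}\left(E\right)$, and by continuity on all of $\ell_{0}^{Y}\left(E\right)$. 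Hence $\rho\left(s\right)=\psi$, proving surjectivity.

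The main obstacle is Step 2: verifying that the coordinate-by-coordinate construction actually delivers an element of $\ell^{Y^{\times}}\left(E^{\ast}\right)$, since we must control the sequence of norms $\{\left\Vert \varphi_{n}\right\Vert _{E^{\ast}}\}$ against all test sequences in $Y$ simultaneously. The remedy is a near-attainment argument that reduces this to evaluating $\psi$ on a single finitely supported vector sequence, for which H\"older's inequality and the definition of $\left\Vert \psi\right\Vert $ provide a uniform bound.
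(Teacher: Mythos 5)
Your proof is correct in substance, but it reaches the key step by a genuinely different route than the paper. Where the paper fixes $m$, uses reflexivity of $(\mathbb{R}^m,\left\Vert \cdot \right\Vert _Y)$ to find a norming vector $(\lambda_1,...,\lambda_m)$ attaining $\left\Vert \{\left\Vert \varphi_n\right\Vert _{E^{\ast }}\}_{n=1}^m\right\Vert _{Y^{\times }}$ exactly, bounds that finite-section norm by $\left\Vert \varphi \right\Vert$, and then invokes the $\sigma$-Fatou property of $Y^{\times }$ (Proposition \ref{propo1810}) to pass to infinitely many coordinates, you instead test $\{\left\Vert \varphi_n\right\Vert _{E^{\ast }}\}$ directly against an arbitrary nonnegative element of $B_Y$ and let the partial sums increase; since the terms are nonnegative, the uniform bound $\left\Vert \psi \right\Vert$ on all finite sections gives membership in $Y^{\times }$ and the norm estimate straight from the definition of $\left\Vert \cdot \right\Vert _{Y^{\times }}$, with no appeal to reflexivity or to the Fatou property. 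This is more elementary and self-contained; what the paper's route buys is that the same finite-dimensional duality computation is reused elsewhere (e.g.\ in Section 6), and Proposition \ref{propo1810} is already on hand. A second, smaller difference: you prove the reverse isometry inequality $\left\Vert s\right\Vert _{\ell ^{Y^{\times }}(E^{\ast })}\leq \left\Vert \rho_s\right\Vert$ for every $s$ in Step 1, whereas the paper only extracts it for the $s$ produced by the surjectivity construction and then combines with $\left\Vert \rho \right\Vert \leq 1$; both suffice. One slip to repair: choosing $u_n$ with $\varphi_n(u_n)\geq \left\Vert \varphi_n\right\Vert _{E^{\ast }}-\epsilon /N$ does not give $\sum_{n=1}^N\alpha_n(\epsilon /N)\leq \epsilon$, because $\{\alpha_n\}\in B_Y$ does not force $\alpha_n\leq 1$ (only $\alpha_n\leq 1/\left\Vert e_n\right\Vert _Y$). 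Use instead a tolerance of the form $\epsilon /(N(\alpha_n+1))$, as the paper does with its $\epsilon /(\left\vert \lambda_n\right\vert m+1)$; this is cosmetic and does not affect the argument. Your Step 3 (agreement on $\ell _{00}^{Y}(E)$ plus density) is equivalent to the paper's use of Lemma \ref{1708}.
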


\begin{proof}
Let $\varphi \in \ell _{0}^{Y}\left( E\right) ^{\ast }.$ For each $n\in 
%TCIMACRO{\U{2115} }%
%BeginExpansion
\mathbb{N}
%EndExpansion
$ define 
\begin{equation*}
\varphi _{n}\left( w\right) :=\varphi \left( e_{n}\cdot w\right) ,\forall
w\in E.
\end{equation*}%
\indent Observe that $\varphi _{n}$ is a linear operator and%
\begin{equation*}
\left\vert \varphi _{n}\left( w\right) \right\vert \leq \left\Vert \varphi
\right\Vert \left\Vert e_{n}\cdot w\right\Vert _{\ell ^{Y}\left( E\right)
}=\left\Vert \varphi \right\Vert \left\Vert e_{n}\right\Vert _{Y}\left\Vert
w\right\Vert _{E},\forall w\in E.
\end{equation*}%
\indent Thus $\varphi _{n}\in E^{\ast }.$ \newline
\newline
\indent Next we will prove that $\left\{ \varphi _{n}\right\} _{n=1}^{\infty
}\in \ell ^{Y^{\times }}\left( E^{\ast }\right) $ i.e., that $\left\{
\left\Vert \varphi _{n}\right\Vert _{E^{\ast }}\right\} _{n=1}^{\infty }\in
Y^{\times }.$ Recall that\ $\left\{ \left\Vert \varphi _{n}\right\Vert
_{E^{\ast }}\right\} _{n=1}^{\infty }\in Y^{\times }$ if 
\begin{equation*}
\sum_{n=1}^{\infty }\left\vert \lambda _{n}\right\vert \left\Vert \varphi
_{n}\right\Vert _{E^{\ast }}<\infty ,\forall \left\{ \lambda _{n}\right\}
_{n=1}^{\infty }\in Y.
\end{equation*}%
\indent Fix $m\in 
%TCIMACRO{\U{2115} }%
%BeginExpansion
\mathbb{N}
%EndExpansion
$ and let $\left( \left\Vert \varphi _{1}\right\Vert _{E^{\ast
}},...,\left\Vert \varphi _{m}\right\Vert _{E^{\ast }}\right) \in 
%TCIMACRO{\U{211d} }%
%BeginExpansion
\mathbb{R}
%EndExpansion
^{m}.$ Since $\left( 
%TCIMACRO{\U{211d} }%
%BeginExpansion
\mathbb{R}
%EndExpansion
^{m},\left\Vert \cdot \right\Vert _{Y}\right) $ is reflexive there exists $%
\left( \lambda _{1},...,\lambda _{m}\right) \in 
%TCIMACRO{\U{211d} }%
%BeginExpansion
\mathbb{R}
%EndExpansion
^{m}$ such that $\left\Vert \left( \lambda _{1},...,\lambda _{m}\right)
\right\Vert _{Y}=1$ and%
\begin{equation*}
\left\Vert \left\{ \left\Vert \varphi _{n}\right\Vert _{E^{\ast }}\right\}
_{n=1}^{m}\right\Vert _{Y^{\times }}=\left\Vert \left( \left\Vert \varphi
_{1}\right\Vert _{E^{\ast }},...,\left\Vert \varphi _{m}\right\Vert
_{E^{\ast }}\right) \right\Vert _{Y^{\ast }}=\sum_{n=1}^{m}\left\Vert
\varphi _{n}\right\Vert _{E^{\ast }}\lambda _{n}.
\end{equation*}%
\indent Fix $\epsilon >0.$ For each $1\leq n\leq m$ take $z_{n}\in B_{E}$
such that $\left\Vert \varphi _{n}\right\Vert _{E^{\ast }}\leq \varphi
_{n}\left( z_{n}\right) +\frac{\epsilon }{\left\vert \lambda _{n}\right\vert
m+1}.$ Then%
\begin{eqnarray}
\left\Vert \left\{ \left\Vert \varphi _{n}\right\Vert _{E^{\ast }}\right\}
_{n=1}^{m}\right\Vert _{Y^{\times }}\hspace{-6pt} &\leq
&\sum_{n=1}^{m}\left\Vert \varphi _{n}\right\Vert _{E^{\ast }}\left\vert
\lambda _{n}\right\vert  \label{aq} \\
&&  \notag \\
&\leq &\sum_{n=1}^{m}\left( \varphi _{n}\left( z_{n}\right) +\frac{\epsilon 
}{\left\vert \lambda _{n}\right\vert m+1}\right) \left\vert \lambda
_{n}\right\vert  \notag \\
&&  \notag \\
&\leq &\sum_{n=1}^{m}\left( \varphi _{n}\left( \left\vert \lambda
_{n}\right\vert z_{n}\right) +\frac{\epsilon }{m}\right) =\varphi \left(
\left\{ \left\vert \lambda _{n}\right\vert z_{n}\right\} _{n=1}^{m}\right)
+\epsilon  \notag \\
&&  \notag \\
&\leq &\left\Vert \varphi \right\Vert _{\ell ^{Y}\left( E\right) ^{\ast
}}\left\Vert \left\{ \left\vert \lambda _{n}\right\vert z_{n}\right\}
_{n=1}^{m}\right\Vert _{\ell ^{Y}\left( E\right) }+\epsilon  \notag \\
&&  \notag \\
&=&\left\Vert \varphi \right\Vert _{\ell ^{Y}\left( E\right) ^{\ast
}}\left\Vert \left( \left\Vert \left\vert \lambda _{1}\right\vert
z_{1}\right\Vert _{E}...,\left\Vert \left\vert \lambda _{m}\right\vert
z_{m}\right\Vert _{E}\right) \right\Vert _{Y}+\epsilon  \notag \\
&&  \notag \\
&\leq &\left\Vert \varphi \right\Vert _{\ell ^{Y}\left( E\right) ^{\ast
}}\left\Vert \left( \left\vert \lambda _{1}\right\vert ,...,\left\vert
\lambda _{m}\right\vert \right) \right\Vert _{Y}+\epsilon  \notag \\
&&  \notag \\
&=&\left\Vert \varphi \right\Vert _{\ell ^{Y}\left( E\right) ^{\ast
}}\left\Vert \left( \lambda _{1},...,\lambda _{m}\right) \right\Vert
_{Y}+\epsilon =\left\Vert \varphi \right\Vert _{\ell ^{Y}\left( E\right)
^{\ast }}+\epsilon .  \notag
\end{eqnarray}%
\indent Therefore%
\begin{equation}
\left\Vert \left\{ \left\Vert \varphi _{n}\right\Vert _{E^{\ast }}\right\}
_{n=1}^{m}\right\Vert _{Y^{\times }}\leq \left\Vert \varphi \right\Vert
_{\ell ^{Y}\left( E\right) ^{\ast }}.  \label{nal}
\end{equation}%
\indent Now, as seen in Proposition \ref{propo1810}, $Y^{\times }$ is $%
\sigma $-Fatou. Thus $\left\{ \left\Vert \varphi _{n}\right\Vert _{E^{\ast
}}\right\} _{n=1}^{\infty }=\sup_{m\in 
%TCIMACRO{\U{2115} }%
%BeginExpansion
\mathbb{N}
%EndExpansion
}\left\{ \left\Vert \varphi _{n}\right\Vert _{E^{\ast }}\right\}
_{n=1}^{m}\in Y^{\times }$ and 
\begin{equation}
\left\Vert \left\{ \left\Vert \varphi _{n}\right\Vert _{E^{\ast }}\right\}
_{n=1}^{\infty }\right\Vert _{Y^{\times }}=\sup_{m\in 
%TCIMACRO{\U{2115} }%
%BeginExpansion
\mathbb{N}
%EndExpansion
}\left\Vert \left\{ \left\Vert \varphi _{n}\right\Vert _{E^{\ast }}\right\}
_{n=1}^{m}\right\Vert _{Y^{\times }}\leq \left\Vert \varphi \right\Vert
_{\ell ^{Y}\left( E\right) ^{\ast }}.  \label{iso4}
\end{equation}%
That is, $\left\{ \varphi _{n}\right\} _{n=1}^{\infty }\in \ell ^{Y^{\times
}}\left( E^{\ast }\right) $ and 
\begin{equation*}
\left\Vert \left\{ \varphi _{n}\right\} _{n=1}^{\infty }\right\Vert _{\ell
^{Y^{\times }}\left( E^{\ast }\right) }\leq \left\Vert \varphi \right\Vert
_{\ell ^{Y}\left( E\right) ^{\ast }}.
\end{equation*}%
\indent Finally we will prove that $\rho \left( \left\{ \varphi _{n}\right\}
_{n=1}^{\infty }\right) =\varphi .$ Let~$\left\{ w_{n}\right\}
_{n=1}^{\infty }\in \ell _{0}^{Y}\left( E\right) .$ Then, by the continuity
of $\varphi $ and Lemma \ref{1708}, 
\begin{eqnarray}
\rho \left( \left\{ \varphi _{n}\right\} _{n=1}^{\infty }\right) \left(
\left\{ w_{n}\right\} _{n=1}^{\infty }\right) &=&\sum_{n=1}^{\infty }\varphi
_{n}\left( w_{n}\right) =\lim_{m\rightarrow \infty }\sum_{n=1}^{m}\varphi
_{n}\left( w_{n}\right)  \label{ewq} \\
&&  \notag \\
&=&\lim_{m\rightarrow \infty }\varphi \left( \left\{ w_{n}\right\}
_{n=1}^{m}\right) =\varphi \left( \left\{ w_{n}\right\} _{n=1}^{\infty
}\right) .  \notag
\end{eqnarray}%
\indent Thus $\rho \left( \left\{ \varphi _{n}\right\} _{n=1}^{\infty
}\right) =\varphi $ and consequently $\rho $ is bijective. From $\left( \ref%
{iso1}\right) $ and $\left( \ref{iso4}\right) \ $we conclude that $\rho $ is
an isometry.
\end{proof}

\section{The dual space of $\ell _{0}^{\protect\tau ,Y}\left( X\right) $}

Now, similarly to the previous section, we analyze the dual space of $\ell
_{0}^{\tau ,Y}\left( X\right) $ and prove that it is a also a vector
sequence space. In order to do this we will begin by recalling some
properties of abstract $L^{1}$-spaces.\newline

\bigskip

A Banach lattice $X_{1}$ is an \textit{abstract }$L^{1}$\textit{-space if}%
\begin{equation}
\left\Vert x+y\right\Vert _{X_{1}}=\left\Vert x\right\Vert
_{X_{1}}+\left\Vert y\right\Vert _{X_{1}},\forall x,y\in X_{1}^{+}\text{
s.t.. }x\wedge y=0.  \label{propnue}
\end{equation}

Observe that, if $\left\Vert \cdot \right\Vert _{X_{1}}$ is additive in the
positive cone, then $X_{1}$ is an abstract $L^{1}$-space.

\bigskip

It is well-known that, given an abstract $L^{1}$-space $X_{1},$ there exists
a localizable measure space $\left( \Omega ,\Sigma ,\mu \right) $ and an
isometric lattice isomorphism between $X_{1}$ and $L^{1}\left( \mu \right) $ 
\cite[Theorem 1.b.2]{Linden}. Furthermore, in \cite[Vol. II, Theorem 243G]%
{Fremlin} is proven that the operator $R:L^{\infty }\left( \mu \right)
\rightarrow L^{1}\left( \mu \right) ^{\ast }$ given by%
\begin{equation}
\left\langle f,Rg\right\rangle =\int_{\Omega }f\left( w\right) \cdot g\left(
w\right) dw,\forall g\in L^{\infty }\left( \mu \right) ,\forall f\in
L^{1}\left( \mu \right) .  \label{juju2}
\end{equation}%
is an isometric lattice isomorphism.

\bigskip

Let $X$ be a Banach lattice, $\varphi \in \left( X^{\ast }\right) ^{+}\ $and 
$N:=\left\{ x\in X:\varphi \left( \left\vert x\right\vert \right) =0\right\}
.$ Consider the quotient space $X/N.$ Then, the function $\left\Vert \cdot
\right\Vert _{\varphi }:X/N\rightarrow 
%TCIMACRO{\U{211d} }%
%BeginExpansion
\mathbb{R}
%EndExpansion
$ given by%
\begin{equation*}
\left\Vert \lbrack x]\right\Vert _{\varphi }:=\varphi \left( \left\vert
x\right\vert \right)
\end{equation*}%
is an order-preserving norm. Since the completion of a normed lattice is a
Banach lattice \cite[Theorem 4.2]{Aliprantis}, it follows that the
completion of $X/N$ is a Banach lattice.

\begin{lemma}
Let $X$ be a Banach lattice and $\varphi \in \left( X^{\ast }\right) ^{+}.$
Then the completion of $X/N$ is an abstract $L^{1}$-space.
\end{lemma}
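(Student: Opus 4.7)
The plan is to reduce the claim to the observation made just above the lemma: it suffices to show that the norm on the completion is additive on its positive cone. I will first verify additivity of $\|\cdot\|_\varphi$ on $(X/N)^+$ inside $X/N$, and then lift this to $\widetilde{X/N}$ by continuity.

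For the first step, fix $[x],[y]\in (X/N)^{+}$. By the description of the order on $X/N$ coming from (\ref{344}), I can choose representatives $x_1,y_1\in X^{+}$ with $x_1\in[x]$ and $y_1\in[y]$. The key preliminary observation is that $\varphi(x_1)=\varphi(|x|)$; indeed, $x-x_1\in N$ and the lattice inequality $\bigl||x|-x_1\bigr|=\bigl||x|-|x_1|\bigr|\leq |x-x_1|$ together with positivity of $\varphi$ gives $\varphi(||x|-x_1|)=0$, so by (\ref{jijiji}) applied to the positive functional $\varphi$, $|\varphi(|x|)-\varphi(x_1)|=|\varphi(|x|-x_1)|\leq \varphi(||x|-x_1|)=0$. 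Thus $\|[x]\|_\varphi=\varphi(x_1)$ and similarly $\|[y]\|_\varphi=\varphi(y_1)$. Since $x_1+y_1\in X^{+}$ is a representative of $[x]+[y]\in(X/N)^{+}$, the same calculation yields
\begin{equation*}
\|[x]+[y]\|_\varphi=\varphi(x_1+y_1)=\varphi(x_1)+\varphi(y_1)=\|[x]\|_\varphi+\|[y]\|_\varphi.
\end{equation*}

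For the second step, let $\widetilde{X/N}$ denote the completion, whose positive cone is described by (\ref{2501}) as the closure of $(X/N)^{+}$. Given $u,v\in (\widetilde{X/N})^{+}$, choose sequences $\{u_n\},\{v_n\}\subset (X/N)^{+}$ with $u_n\to u$ and $v_n\to v$ in the completion norm. Then $u_n+v_n\in (X/N)^{+}$, $u_n+v_n\to u+v$, and by continuity of the norm on $\widetilde{X/N}$ and the additivity just proved,
\begin{equation*}
\|u+v\|=\lim_{n\to\infty}\|u_n+v_n\|=\lim_{n\to\infty}\bigl(\|u_n\|+\|v_n\|\bigr)=\|u\|+\|v\|.
\end{equation*}
Thus the norm of $\widetilde{X/N}$ is additive on its entire positive cone; in particular it is additive on disjoint pairs, so the observation recorded just after (\ref{propnue}) applies and $\widetilde{X/N}$ is an abstract $L^{1}$-space.

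The main obstacle I anticipate is the first step: one must justify that the value $\varphi(x_1)$ obtained from a positive representative coincides with $\varphi(|x|)$, so that $\|\cdot\|_\varphi$ behaves linearly when one restricts to positive representatives. Everything else (the lifting to the completion and the invocation of the observation) is essentially formal once this identification is available.
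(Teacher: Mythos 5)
Your proof is correct and follows essentially the same route as the paper: establish additivity of $\left\Vert \cdot \right\Vert _{\varphi }$ on $\left( X/N\right) ^{+}$, lift it to the completion by continuity of the norm along approximating sequences from $\left( X/N\right) ^{+}$, and invoke the observation after $\left( \ref{propnue}\right) $. The only difference is cosmetic: you verify $\varphi \left( x_{1}\right) =\varphi \left( \left\vert x\right\vert \right) $ for a positive representative $x_{1}$, whereas the paper uses the equivalent identification $[x]=[\left\vert x\right\vert ]$ for $[x]\geq 0$; both rest on the same lattice inequality.
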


\begin{proof}
Let $\left( X_{1},\left\Vert \cdot \right\Vert _{1}\right) $ be the
completion of $X/N$. Observe that, for every $x,y\in X$ such that $%
[x],[y]\in \left( X/N\right) ^{+}$ we have that \ 
\begin{eqnarray*}
\left\Vert \lbrack x]+[y]\right\Vert _{\varphi } &=&\left\Vert [\left\vert
x\right\vert ]+[\left\vert y\right\vert ]\right\Vert _{\varphi }=\left\Vert
[\left\vert x\right\vert +\left\vert y\right\vert ]\right\Vert _{\varphi
}=\varphi \left( \left\vert x\right\vert +\left\vert y\right\vert \right) \\
&=&\varphi \left( \left\vert x\right\vert \right) +\varphi \left( \left\vert
y\right\vert \right) =\left\Vert [x]\right\Vert _{\varphi }+\left\Vert
[y]\right\Vert _{\varphi }.
\end{eqnarray*}%
\newline
\indent Thus, the norm $\left\Vert \cdot \right\Vert _{\varphi }$ is
additive on $\left( X/N\right) ^{+}.$ Next let $x,y\in X_{1}^{+}.$ Take $%
\left\{ x_{n}\right\} _{n=1}^{\infty },\left\{ y_{n}\right\} _{n=1}^{\infty
}\subset X$ such that $\left\{ [x_{n}]\right\} _{n=1}^{\infty },\left\{
[y_{n}]\right\} _{n=1}^{\infty }\subset \left( X/N\right) ^{+}$ are
convergent in $X^{1}$ to $x$ and $y$, respectively. Then 
\begin{eqnarray*}
\left\Vert x+y\right\Vert _{1} &=&\left\Vert \lim_{n\rightarrow \infty
}[x_{n}]+\lim_{n\rightarrow \infty }[y_{n}]\right\Vert
_{1}=\lim_{n\rightarrow \infty }\left\Vert \left( [x_{n}]+[y_{n}]\right)
\right\Vert _{1} \\
&& \\
&=&\lim_{n\rightarrow \infty }\left\Vert \left( [x_{n}]+[y_{n}]\right)
\right\Vert _{\varphi }=\lim_{n\rightarrow \infty }\left( \left\Vert
[x_{n}]\right\Vert _{\varphi }+\left\Vert [y_{n}]\right\Vert _{\varphi
}\right) \\
&& \\
&=&\lim_{n\rightarrow \infty }\left\Vert [x_{n}]\right\Vert
_{1}+\lim_{n\rightarrow \infty }\left\Vert [y_{n}]\right\Vert
_{1}=\left\Vert x\right\Vert _{1}+\left\Vert y\right\Vert _{1}.
\end{eqnarray*}%
\indent The above proves that $X_{1}$ is an abstract $L_{1}$-space.
\end{proof}

\begin{lemma}
Let $X$ be a Banach lattice and $Y$ a Banach sequence lattice. Then, for
every $n\in 
%TCIMACRO{\U{2115} }%
%BeginExpansion
\mathbb{N}
%EndExpansion
$, $x_{1},...,x_{n}\in X$ and $\varphi _{1},...,\varphi _{n}\in X^{\ast },%
\label{lemaqcv}$%
\begin{equation}
\sum_{j=1}^{n}\left\vert \varphi _{j}\left( x_{j}\right) \right\vert \leq
\left\langle \left\Vert \left( x_{1},...,x_{n}\right) \right\Vert
_{X^{n},Y,\tau },\left\Vert \left( \varphi _{1},...,\varphi _{n}\right)
\right\Vert _{\left( X^{n}\right) ^{\ast },Y^{\times },\tau }\right\rangle
_{X,X^{\ast }}.  \label{qcv}
\end{equation}
\end{lemma}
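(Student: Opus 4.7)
The plan is to extend the Lindenstrauss--Tzafriri argument from the $\ell^p$/$\ell^q$ case using the abstract $L^1$-space construction recalled just before the statement together with pointwise Hölder in $(Y, Y^\times)$. I read the right-hand side as the dual pairing of $\|(x_1,\ldots,x_n)\|_Y\in X^+$ with $\|(\varphi_1,\ldots,\varphi_n)\|_{Y^\times}\in (X^*)^+$, these being the $X$- and $X^*$-valued Krivine images of $\|\cdot\|_{\mathbb{R}^n,Y}$ and $\|\cdot\|_{\mathbb{R}^n,Y^\times}$. First, using (\ref{jijiji}) and Lemma \ref{lemalema}(ii) (by which both Krivine expressions depend only on absolute values), I would reduce immediately to the case $x_j \in X^+$ and $\varphi_j \in (X^*)^+$.

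Next, I would set $\mu := \sum_{j=1}^n \varphi_j \in (X^*)^+$, put $N := \{x \in X : \mu(|x|) = 0\}$, and let $X_1$ be the completion of $X/N$ under $\|[x]\|_\mu := \mu(|x|)$; by the lemma preceding the statement, $X_1$ is an abstract $L^1$-space, hence lattice isometric to some $L^1(\nu)$, with $X_1^* \cong L^\infty(\nu)$ via (\ref{juju2}). Let $q: X \to X_1$ denote the canonical lattice homomorphism. Since $0 \leq \varphi_j \leq \mu$, each $\varphi_j$ vanishes on $N$ and therefore factors as $\varphi_j = \tilde\varphi_j \circ q$ for some $\tilde\varphi_j \in (X_1^*)^+$, represented by $g_j \in L^\infty(\nu)^+$; set also $\tilde x_j := q(x_j) \in L^1(\nu)^+$. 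Because Krivine's calculus acts pointwise in $L^1(\nu)$ and $L^\infty(\nu)$, Lemma \ref{lema1802} applied to $q$ gives $q(\|(x_j)\|_Y) = \|(\tilde x_j)\|_Y$ with $\|(\tilde x_j)\|_Y(\omega) = \|(\tilde x_j(\omega))\|_{\mathbb{R}^n,Y}$ $\nu$-a.e., and the analogous formula for $(g_j)$.

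Applying the scalar Hölder inequality (Proposition \ref{holder}) pointwise and integrating then produces
\begin{equation*}
\sum_{j=1}^n \varphi_j(x_j) = \int_\Omega \sum_j g_j \tilde x_j \, d\nu \leq \int_\Omega \|(\tilde x_j(\omega))\|_Y \|(g_j(\omega))\|_{Y^\times}\, d\nu(\omega) = \langle q(\|(x_j)\|_Y), \|(\tilde\varphi_j)\|_{Y^\times}\rangle_{X_1, X_1^*},
\end{equation*}
which by adjointness equals $\langle \|(x_j)\|_Y, q^*(\|(\tilde\varphi_j)\|_{Y^\times})\rangle_{X, X^*}$.

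The main obstacle is the final identification $q^*(\|(\tilde\varphi_j)\|_{Y^\times}) = \|(\varphi_j)\|_{Y^\times}$: positivity of $q^*$ alone only gives the inequality $q^*(\|(\tilde\varphi_j)\|_{Y^\times}) \geq \|(\varphi_j)\|_{Y^\times}$, which points the wrong way. To upgrade to equality I would verify that $q^*$ is in fact a lattice homomorphism, using the Riesz decomposition property of the quotient: any positive splitting $y_1 + y_2 = q(x)$ lifts to $x_1 + x_2 = x$ in $X^+$ by taking $x_1 := (x_1' \vee 0) \wedge x$ for any $x_1' \in q^{-1}(y_1)$. This lifting makes the infima in formula (\ref{defi1802}) defining $\tilde\varphi \wedge \tilde\psi$ coincide with those defining $q^*\tilde\varphi \wedge q^*\tilde\psi$, so $q^*$ preserves $\wedge$; Lemma \ref{lema1802} then yields $q^*(\|(\tilde\varphi_j)\|_{Y^\times}) = \|(q^*\tilde\varphi_j)\|_{Y^\times} = \|(\varphi_j)\|_{Y^\times}$, which closes the inequality.
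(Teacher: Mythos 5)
Your proposal follows the same architecture as the paper's proof: reduce to an abstract $L^{1}$-space by quotienting $X$ by the null ideal $N$ of a dominating positive functional, represent everything in $L^{1}(\nu )$ and $L^{\infty }(\nu )$, apply H\"{o}lder's inequality (Proposition \ref{holder}) pointwise, and pull the result back through lattice homomorphisms via Lemma \ref{lema1802}. Your choice of dominating functional $\mu =\sum_{j}\varphi _{j}$ instead of the paper's $\varphi =\left\Vert \left( \varphi _{1},...,\varphi _{n}\right) \right\Vert _{Y^{\times }}$ is immaterial: both dominate each $\left\vert \varphi _{j}\right\vert $ up to a constant, which is all that is needed to make the $\widetilde{\varphi _{j}}$ well defined and bounded on the quotient. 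Where you genuinely diverge is the final identification. The paper closes with the chain $\left( \ref{fty22}\right) $, whose last steps pass through the scalar-sequence quantity $\left\Vert \left( \varphi _{1}\left( z\right) ,...,\varphi _{n}\left( z\right) \right) \right\Vert _{Y^{\times }}$ with $z=\left\Vert \left( x_{1},...,x_{n}\right) \right\Vert _{Y}$; taken literally those equalities assert that evaluation at a fixed positive vector commutes with Krivine's calculus on the dual, which in general yields only inequalities, and in opposite directions on the two sides of that pivot. You instead isolate the real issue --- positivity of $q^{\ast }$ alone gives $q^{\ast }\left( \left\Vert \left( \widetilde{\varphi _{j}}\right) \right\Vert _{Y^{\times }}\right) \geq \left\Vert \left( \varphi _{j}\right) \right\Vert _{Y^{\times }}$, the wrong way --- and repair it by proving that $q^{\ast }$ is a lattice homomorphism because the quotient map is interval preserving; this is the correct mechanism (the standard duality between interval-preserving operators and lattice-homomorphic adjoints), and it is what actually closes the inequality. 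The one point to tighten: your lifting ``take any $x_{1}^{\prime }\in q^{-1}\left( y_{1}\right) $'' only makes sense when $y_{1}$ lies in the range of $q$, i.e.\ in $X/N$; for positive splittings taken in the completion $X_{1}$ you should either approximate by splittings in the dense sublattice $X/N$, or, more cleanly, factor $q=\iota \circ \pi $ and observe that $\iota ^{\ast }:X_{1}^{\ast }\rightarrow \left( X/N\right) ^{\ast }$ is a lattice isomorphism (a bipositive isometric bijection, since $\left( X/N\right) ^{+}$ is dense in $X_{1}^{+}$ by $\left( \ref{2501}\right) $), so that only $\pi ^{\ast }$ requires the interval-preservation argument. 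With that adjustment your proof is complete and, at the crucial last step, more careful than the argument printed in the paper.
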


\begin{proof}
Let $\varphi _{1},...,\varphi _{n}\in X^{\ast }$. Define 
\begin{equation*}
\varphi :=\left\Vert \left( \varphi _{1},...,\varphi _{n}\right) \right\Vert
_{Y^{\times }}\in \left( X^{\ast }\right) ^{+}\text{ and }N:=\left\{ x\in
X:\varphi \left( \left\vert x\right\vert \right) =0\right\} .
\end{equation*}%
\indent By the above lemma the completion of $\left( X/N,\left\Vert \cdot
\right\Vert _{\varphi }\right) ,$ $X_{1}$ is an abstract $L^{1}$-space. Thus
there exists a localizable measure space $\left( \Omega ,\Sigma ,\mu \right) 
$ and an isometric lattice isomorphism $I:X_{1}\rightarrow L^{1}\left( \mu
\right) .$\newline
\newline
\indent Now for every $j\in \left\{ 1,...,n\right\} \ $let us define the
functional $\widetilde{\varphi _{j}}:X/N\rightarrow 
%TCIMACRO{\U{211d} }%
%BeginExpansion
\mathbb{R}
%EndExpansion
$ by%
\begin{equation}
\widetilde{\varphi _{j}}\left( [x]\right) :=\varphi _{j}\left( x\right)
,\forall x\in X.  \label{12bn}
\end{equation}%
\indent Observe that%
\begin{equation}
\left\Vert e_{j}\right\Vert _{Y^{\times }}\left\vert \varphi _{j}\right\vert
=\left\Vert \left( 0,...,\varphi _{j},0,...0\right) \right\Vert _{Y^{\times
}}\leq \left\Vert \left( \varphi _{1},...,\varphi _{n}\right) \right\Vert
_{Y^{\times }}=\varphi .  \label{3003}
\end{equation}%
\indent Then, by $\left( \ref{jijiji}\right) ,$%
\begin{equation*}
\left\vert \varphi _{j}\left( z\right) \right\vert \leq \left\vert \varphi
_{j}\right\vert \left( \left\vert z\right\vert \right) \leq \frac{1}{%
\left\Vert e_{j}\right\Vert _{Y^{\times }}}\varphi \left( \left\vert
z\right\vert \right) .
\end{equation*}%
\indent Thus $N$ is contained in the kernel of $\varphi _{j}$ and
consequently $\widetilde{\varphi _{j}}$ is well-defined. Now, since $%
\left\vert \varphi _{j}\right\vert $ is a positive operator, by $\left( \ref%
{3003}\right) $ it follows that 
\begin{equation*}
\left\vert \widetilde{\varphi _{j}}\left( [x]\right) \right\vert =\left\vert
\left\vert \varphi _{j}\right\vert \left( x\right) \right\vert \leq
\left\vert \varphi _{j}\right\vert \left( \left\vert x\right\vert \right)
\leq \frac{1}{\left\Vert e_{j}\right\Vert _{Y^{\times }}}\varphi \left(
\left\vert x\right\vert \right) =\frac{1}{\left\Vert e_{j}\right\Vert
_{Y^{\times }}}\left\Vert [x]\right\Vert _{\varphi },\forall x\in X.
\end{equation*}%
\newline
\indent Therefore $\widetilde{\varphi _{j}}$ is bounded in $X/N$ and in
consequence it is bounded in $X_{1}.$ So, $\widetilde{\varphi _{j}}\in
X_{1}^{\ast }.$\newline
\newline
\indent Now, consider the transpose operator $\left( I^{-1}\right) ^{\ast
}:X_{1}^{\ast }\rightarrow L^{1}\left( \mu \right) ^{\ast }$ and observe
that it is an isometric lattice isomorphism. Fix $x_{1},...,x_{n}\in X$ and
for each $1\leq j\leq n$ let us define%
\begin{equation*}
f_{j}:=I\left( [x_{j}]\right) \in L^{1}\left( \mu \right) ,
\end{equation*}%
\begin{equation*}
\psi _{j}:=\left( I^{-1}\right) ^{\ast }\left( \widetilde{\varphi _{j}}%
\right) \in L^{1}\left( \mu \right) ^{\ast }
\end{equation*}%
and%
\begin{equation*}
g_{j}:=R^{-1}\left( \psi _{j}\right) \in L^{\infty }\left( \mu \right)
\end{equation*}%
where $R:L^{\infty }\left( \mu \right) \rightarrow L^{1}\left( \mu \right)
^{\ast }$ is the canonical map defined in $\left( \ref{juju2}\right) .$
Then, by $\left( \ref{juju2}\right) \ $and $\left( \ref{12bn}\right) $ it
follows that%
\begin{eqnarray*}
\left\vert \varphi _{j}\left( x_{j}\right) \right\vert &=&\left\vert 
\widetilde{\varphi _{j}}\left( \left[ x_{j}\right] \right) \right\vert
=\left\vert \left\langle \left[ x_{j}\right] ,\widetilde{\varphi _{j}}%
\right\rangle \right\vert =\left\vert \left\langle I\left[ x_{j}\right]
,\left( I^{-1}\right) ^{\ast }\widetilde{\varphi _{j}}\right\rangle
\right\vert \\
&& \\
&=&\left\vert \left\langle I\left[ x_{j}\right] ,\left( R\circ R^{-1}\circ
\left( I^{-1}\right) ^{\ast }\right) \widetilde{\varphi _{j}}\right\rangle
\right\vert =\left\vert \left\langle I\left[ x_{j}\right] ,Rg_{j}\right%
\rangle \right\vert \\
&& \\
&=&\left\vert \int_{\Omega }f_{j}\left( w\right) \cdot g_{j}\left( w\right)
dw\right\vert \leq \int_{\Omega }\left\vert f_{j}\left( w\right) \cdot
g_{j}\left( w\right) \right\vert dw.
\end{eqnarray*}%
\newline
\indent Thus, by H\"{o}lder%
%TCIMACRO{\U{b4}}%
%BeginExpansion
\'{}%
%EndExpansion
s inequality 
\begin{eqnarray}
\sum_{j=1}^{n}\left\vert \varphi _{j}\left( x_{j}\right) \right\vert &\leq &%
\hspace{-0.1cm}\sum_{j=1}^{n}\int_{\Omega }\left\vert f_{j}\left( w\right)
\cdot g_{j}\left( w\right) \right\vert dw=\hspace{-0.1cm}\int_{\Omega
}\sum_{j=1}^{n}\left\vert f_{j}\left( w\right) \cdot g_{j}\left( w\right)
\right\vert dw  \label{fty} \\
&&  \notag \\
&\leq &\int_{\Omega }\left\Vert \left( g_{1}\left( w\right) ,...,g_{n}\left(
w\right) \right) \right\Vert _{Y^{\times }}\cdot \left\Vert \left(
f_{1}\left( w\right) ,...,f_{n}\left( w\right) \right) \right\Vert _{Y}dw 
\notag \\
&&  \notag \\
&=&\hspace{-0.3cm}\int_{\Omega }\left( \left\Vert \left(
g_{1},...,g_{n}\right) \right\Vert _{Y^{\times }}\left( w\right) \right)
\cdot \left( \left\Vert \left( f_{1},...,f_{n}\right) \right\Vert _{Y}\left(
w\right) \right) dw  \notag \\
&&  \notag \\
&=&c,  \notag
\end{eqnarray}%
where $c=\left\langle \left\Vert \left( f_{1},...,f_{n}\right) \right\Vert
_{Y},R\left( \left\Vert \left( g_{1},...,g_{n}\right) \right\Vert
_{Y^{\times }}\right) \right\rangle $. Now, since $I,$ $\left( I^{-1}\right)
^{\ast }\ $and $R$ are order-preserving operators, it follows by Lemma \ref%
{lema1802} and $\left( \ref{12bn}\right) $ that\vspace{-0.1cm}\newline
\begin{eqnarray}
c &=&\left\langle \left\Vert \left( f_{1},...,f_{n}\right) \right\Vert
_{Y},\left\Vert \left( Rg_{1},...,Rg_{n}\right) \right\Vert _{Y^{\times
}}\right\rangle  \label{fty22} \\
&&  \notag \\
&=&\left\langle \left\Vert \left( I\left( [x_{1}]\right) ,...,I\left(
[x_{n}]\right) \right) \right\Vert _{Y},\left\Vert \left( \left(
I^{-1}\right) ^{\ast }\left( \widetilde{\varphi _{1}}\right) ,...,\left(
I^{-1}\right) ^{\ast }\left( \widetilde{\varphi _{n}}\right) \right)
\right\Vert _{Y^{\times }}\right\rangle  \notag \\
&&  \notag \\
&=&\left\langle I\left( \left\Vert \left( [x_{1}],...,[x_{n}]\right)
\right\Vert _{Y}\right) ,\left( I^{-1}\right) ^{\ast }\left( \left\Vert
\left( \widetilde{\varphi _{1}},...,\widetilde{\varphi _{n}}\right)
\right\Vert _{Y^{\times }}\right) \right\rangle  \notag \\
&&  \notag \\
&=&\left\langle \left\Vert \left( [x_{1}],...,[x_{n}]\right) \right\Vert
_{Y},\left\Vert \left( \widetilde{\varphi _{1}},...,\widetilde{\varphi _{n}}%
\right) \right\Vert _{Y^{\times }}\right\rangle  \notag \\
&&  \notag \\
&=&\left\Vert \left( \widetilde{\varphi _{1}}\left( \left\Vert \left(
[x_{1}],...,[x_{n}]\right) \right\Vert _{Y}\right) ,...,\widetilde{\varphi
_{n}}\left( \left\Vert \left( [x_{1}],...,[x_{n}]\right) \right\Vert
_{Y}\right) \right) \right\Vert _{Y^{\times }}  \notag \\
&&  \notag \\
&=&\left\Vert \left( \varphi _{1}\left( \left\Vert \left(
x_{1},...,x_{n}\right) \right\Vert _{Y}\right) ,...,\varphi _{n}\left(
\left\Vert \left( x_{1},...,x_{n}\right) \right\Vert _{Y}\right) \right)
\right\Vert _{Y^{\times }}  \notag \\
&&  \notag \\
&=&\left\langle \left\Vert \left( x_{1},...,x_{n}\right) \right\Vert
_{Y},\left\Vert \left( \varphi _{1},...,\varphi _{n}\right) \right\Vert
_{Y^{\times }}\right\rangle .  \notag
\end{eqnarray}%
\indent From $\left( \ref{fty}\right) $ and $\left( \ref{fty22}\right) $ we
obtain $\left( \ref{qcv}\right) $.
\end{proof}

\bigskip

\bigskip

Next, for each $s=\left\{ \varphi _{n}\right\} _{n=1}^{\infty }\in \ell
^{\tau ,Y^{\times }}\left( X^{\ast }\right) \ $let us define $\varphi
_{s}:\ell _{00}^{\tau ,Y}\left( X\right) \rightarrow 
%TCIMACRO{\U{211d} }%
%BeginExpansion
\mathbb{R}
%EndExpansion
$ by%
\begin{equation*}
\varphi _{s}\left( \left\{ x_{n}\right\} _{n=1}^{\infty }\right)
=\sum_{n=1}^{\infty }\varphi _{n}\left( x_{n}\right) ,\forall \left\{
x_{n}\right\} _{n=1}^{\infty }\in \ell _{00}^{\tau ,Y}\left( X\right) .
\end{equation*}%
\indent Clearly $\varphi $ is a linear functional. Let $\left\{
x_{n}\right\} _{n=1}^{\infty }\in \ell _{00}^{\tau ,Y}\left( X\right) $ and $%
k\in 
%TCIMACRO{\U{2115} }%
%BeginExpansion
\mathbb{N}
%EndExpansion
$ be such that $x_{n}=0,n>k.$ Then, by the above lemma%
\begin{eqnarray*}
\left\vert \varphi _{s}\left( \left\{ x_{n}\right\} _{n=1}^{\infty }\right)
\right\vert \hspace{-0.2cm} &=&\hspace{-0.2cm}\left\vert
\sum_{j=1}^{k}\varphi _{j}\left( x_{j}\right) \right\vert \leq
\sum_{j=1}^{k}\left\vert \varphi _{j}\left( x_{j}\right) \right\vert \\
&& \\
&\leq &\left\langle \left\Vert \left( x_{1},...,x_{k}\right) \right\Vert
_{Y},\left\Vert \left( \varphi _{1},...,\varphi _{k}\right) \right\Vert
_{Y^{\times }}\right\rangle _{X,X^{\ast }}. \\
&& \\
&\leq &\left\Vert \left\Vert \left( x_{1},...,x_{k}\right) \right\Vert
_{Y}\right\Vert _{X}\left\Vert \left\Vert \left( \varphi _{1},...,\varphi
_{k}\right) \right\Vert _{Y^{\times }}\right\Vert _{X^{\ast }} \\
&& \\
&=&\left\Vert \left( x_{1},...,x_{k}\right) \right\Vert _{X^{k},\tau
,Y}\left\Vert \left( \varphi _{1},...,\varphi _{k}\right) \right\Vert
_{\left( X^{\ast }\right) ^{k},\tau ,Y^{\times }} \\
&& \\
&\leq &\left\Vert \left\{ x_{n}\right\} _{n=1}^{\infty }\right\Vert _{\ell
^{\tau ,Y}\left( X\right) }\left\Vert s\right\Vert _{\ell ^{\tau ,Y^{\times
}}\left( X^{\ast }\right) }
\end{eqnarray*}%
\indent Thus $\varphi _{s}\in \ell _{00}^{\tau ,Y}\left( X\right) ^{\ast }$
and $\left\Vert \varphi _{s}\right\Vert \leq \left\Vert s\right\Vert _{\ell
^{\tau ,Y^{\times }}\left( X^{\ast }\right) }.$ Therefore $\varphi _{s}$ can
be uniquely extended to $\ell _{0}^{\tau ,Y}\left( X\right) ^{\ast }.$%
\newline
\newline
\indent Let us define $J:\ell ^{\tau ,Y^{\times }}\left( X^{\ast }\right)
\rightarrow \ell _{0}^{\tau ,Y}\left( X\right) ^{\ast }$ by%
\begin{equation}
Js=\varphi _{s},\forall s\in \ell ^{\tau ,Y^{\times }}\left( X^{\ast
}\right) .  \label{dedede2}
\end{equation}%
\indent Observe that $J$ is a continuous linear operator such that%
\begin{equation}
\left\Vert Js\right\Vert _{\ell _{0}^{\tau ,Y}\left( X\right) ^{\ast }}\leq
\left\Vert s\right\Vert _{\ell ^{\tau ,Y^{\times }}\left( X^{\ast }\right)
},\forall s\in \ell ^{\tau ,Y^{\times }}\left( X^{\ast }\right) .
\label{smt}
\end{equation}

\begin{theorem}
Let $X$ be a Banach lattice and $J:\ell ^{\tau ,Y^{\times }}\left( X^{\ast
}\right) \rightarrow \ell _{0}^{\tau ,Y}\left( X\right) ^{\ast }$ as in $%
\left( \ref{dedede2}\right) $. Then $J$ is an isometric isomorphism. We will
denote this by\label{tmapen2}%
\begin{equation}
\ell _{0}^{\tau ,Y}\left( X\right) ^{\ast }=\ell ^{\tau ,Y^{\times }}\left(
X^{\ast }\right) .  \label{30032}
\end{equation}
\end{theorem}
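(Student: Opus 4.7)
The plan is to follow the architecture of Theorem \ref{tmapen}. Inequality $(\ref{smt})$ already gives $\|J\|\leq 1$, so the remaining task is to show that every $\varphi\in\ell_0^{\tau,Y}(X)^\ast$ is of the form $Js$ for some $s\in\ell^{\tau,Y^\times}(X^\ast)$ with $\|s\|_{\ell^{\tau,Y^\times}(X^\ast)}\leq\|\varphi\|$. For each $n\in\mathbb{N}$ I would define $\varphi_n\in X^\ast$ by $\varphi_n(x):=\varphi(\{y_j\}_{j=1}^\infty)$, where $y_n=x$ and $y_j=0$ for $j\neq n$; continuity of $\varphi_n$ on $X$ is immediate from the isometric embedding $X\hookrightarrow\ell_0^{\tau,Y}(X)$ in $(\ref{inciso})$.

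The crucial step is to verify $\{\varphi_n\}_{n=1}^\infty\in\ell^{\tau,Y^\times}(X^\ast)$ with the right bound. It suffices to prove, for each $m\in\mathbb{N}$, that $\bigl\|\|(\varphi_1,\ldots,\varphi_m)\|_{Y^\times}\bigr\|_{X^\ast}\leq\|\varphi\|$. Set $\Psi_m:=\|(\varphi_1,\ldots,\varphi_m)\|_{Y^\times}\in(X^\ast)^+$; its $X^\ast$-norm equals $\sup\{\Psi_m(x):x\in X^+,\ \|x\|_X\leq 1\}$. Applying Corollary \ref{coro1405} in $X^\ast$ with $Y^\times$ in place of $Y$ (and identifying $(\mathbb{R}^m,\|\cdot\|_{Y^\times})^\ast\cong(\mathbb{R}^m,\|\cdot\|_Y)$ in finite dimensions), $\|\Psi_m\|_{X^\ast}$ is dominated by suprema of finite maxima $\bigvee_{j=1}^k\sum_{i=1}^m a_{i,j}\varphi_i$ evaluated in $X^\ast$, with $\|(a_{1,j},\ldots,a_{m,j})\|_Y\leq 1$ for each $j$. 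Formula $(\ref{defi1802})$ rewrites each such maximum evaluated at $x\in X^+$ as a supremum over positive partitions $\sum_{j=1}^k x_j=x$ of $\sum_{j,i}a_{i,j}\varphi_i(x_j)$, and the key collapsing identity
\begin{equation*}
\sum_{j=1}^k\sum_{i=1}^m a_{i,j}\,\varphi_i(x_j)\;=\;\varphi\Bigl(\bigl\{\textstyle\sum_{j=1}^k a_{i,j}\,x_j\bigr\}_{i=1}^m\Bigr),
\end{equation*}
which follows from the definitions of the $\varphi_i$ and the linearity of $\varphi$, reduces the problem to estimating the $\ell^{\tau,Y}(X)$-norm of the right-hand argument. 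Lemma \ref{lemalema}(iii) combined with the positive-vector identity $\|(a_{1,j}x_j,\ldots,a_{m,j}x_j)\|_Y=\|(a_{1,j},\ldots,a_{m,j})\|_Y\cdot x_j$ for $x_j\geq 0$ (a consequence of Lemma \ref{lemalema}(v) applied to the positive linear map $t\mapsto tx_j:\mathbb{R}\to X$) shows that this argument is dominated in $X$ by $\sum_j x_j=x$, so its $\ell^{\tau,Y}(X)$-norm is at most $\|x\|_X\leq 1$. Thus $\Psi_m(x)\leq\|\varphi\|$; letting $m\to\infty$ and invoking the $\sigma$-Fatou property of $Y^\times$ (Proposition \ref{propo1810}) gives $\|\{\varphi_n\}\|_{\ell^{\tau,Y^\times}(X^\ast)}\leq\|\varphi\|$, exactly as in the closing step of Theorem \ref{tmapen}.

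Once $\{\varphi_n\}$ is controlled, $J(\{\varphi_n\})=\varphi$ follows because both functionals agree on $\ell_{00}^{\tau,Y}(X)$ by the very definition of $\varphi_n$, and Lemma \ref{nuevo2} (density of $\ell_{00}^{\tau,Y}(X)$ in $\ell_0^{\tau,Y}(X)$) extends the equality by continuity to all of $\ell_0^{\tau,Y}(X)$. Surjectivity of $J$ results, and combining $(\ref{smt})$ with the reverse inequality just obtained yields the isometry. The main obstacle, relative to Theorem \ref{tmapen}, is that there the coefficients $\|\varphi_n\|_{E^\ast}$ are scalars in $Y^\times$, so scalar reflexivity together with a choice of test vector $z_n\in B_E$ in each slot suffices; here, $\Psi_m$ is a genuine $X^\ast$-valued Krivine expression, and three nested suprema (over $x\in B_X\cap X^+$, over coefficient matrices $(a_{i,j})$, and over positive partitions of $x$) must be reconciled. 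The collapsing identity, underpinned by the positive-vector formula from Krivine's calculus, is precisely what makes these suprema pair off cleanly and delivers the sharp bound $\|\varphi\|$ without losing any constant.
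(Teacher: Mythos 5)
Your proposal is correct and follows essentially the same route as the paper's proof: the same definition $\varphi_n(x)=\varphi(e_n\cdot x)$, the same reduction of $\left\Vert \left( \varphi _{1},...,\varphi _{m}\right) \right\Vert _{\left( X^{\ast }\right) ^{m},Y^{\times },\tau }$ via Corollary \ref{coro1405} and formula $\left( \ref{defi1802}\right) $ to the collapsing identity over positive partitions, the same pointwise Krivine estimate $\left\Vert \left( a_{1,j}x_{j},...,a_{m,j}x_{j}\right) \right\Vert _{Y}\leq x_{j}$ (which you obtain from Lemma \ref{lemalema}$(v)$ applied to $t\mapsto tx_{j}$, a harmless variant of the paper's use of $G\in \mathcal{H}_{n}^{n}$ with Proposition \ref{propkrivi}), and the same density argument for surjectivity. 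The one point where the paper is more careful than your sketch: the functionals $\bigvee_{j}\sum_{i}a_{i,j}\varphi _{i}$ produced by Corollary \ref{coro1405} need not be positive, so their $X^{\ast }$-norm is not a priori the supremum over $x\in X^{+}\cap B_{X}$; the paper fixes this by dominating them with $\bigvee_{j}\sum_{i}\left\vert a_{i,j}\right\vert \left\vert \varphi \right\vert _{i}$ (using $\left\vert \varphi _{n}\right\vert \leq \left\vert \varphi \right\vert _{n}$ and collapsing with $\left\vert \varphi \right\vert $ instead of $\varphi $), a one-line repair your argument also needs, while your appeal to the $\sigma $-Fatou property is unnecessary here since the $\ell ^{\tau ,Y^{\times }}$-norm is by definition a supremum over finite sections.
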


\begin{proof}
We have already proven in $\left( \ref{smt}\right) $ that $J$ is a
continuous linear operator. Now we wil prove that it is surjective and
isometric.\newline
\newline
\indent Given $\varphi \in \ell _{0}^{\tau ,Y}\left( X\right) ^{\ast }$ and $%
n\in 
%TCIMACRO{\U{2115} }%
%BeginExpansion
\mathbb{N}
%EndExpansion
$ let us define $\varphi _{n}:X\rightarrow 
%TCIMACRO{\U{211d} }%
%BeginExpansion
\mathbb{R}
%EndExpansion
$ and $\left\vert \varphi \right\vert _{n}:X\rightarrow 
%TCIMACRO{\U{211d} }%
%BeginExpansion
\mathbb{R}
%EndExpansion
$ by%
\begin{equation*}
\varphi _{n}\left( x\right) :=\varphi \left( e_{n}\cdot x\right) \text{ and }%
\left\vert \varphi \right\vert _{n}\left( x\right) :=\left\vert \varphi
\right\vert \left( e_{n}\cdot x\right) ,\forall x\in X,
\end{equation*}%
where $e_{n}\cdot x=\left( 0,...,x,0,...\right) $. Observe that $\varphi
_{n},\left\vert \varphi \right\vert _{n}\in X^{\ast }$ and \newline
\begin{equation}
\left\vert \varphi _{n}\right\vert \leq \left\vert \varphi \right\vert
_{n},n\in 
%TCIMACRO{\U{2115} }%
%BeginExpansion
\mathbb{N}
%EndExpansion
.  \label{1303}
\end{equation}%
\newline
\indent Next, in order to establish that $s=\left\{ \varphi _{n}\right\}
_{n=1}^{\infty }\in \ell ^{\tau ,Y^{\times }}\left( X^{\ast }\right) $ we
will prove that%
\begin{equation}
\sup_{n\in 
%TCIMACRO{\U{2115} }%
%BeginExpansion
\mathbb{N}
%EndExpansion
}\left\{ \left\Vert \left( \varphi _{1},...,\varphi _{n}\right) \right\Vert
_{\left( X^{\ast }\right) ^{n},h_{n,Y^{\times }}}\right\} \leq \left\Vert
\varphi \right\Vert _{\ell _{0}^{\tau ,Y}\left( X\right) ^{\ast }}<\infty .
\label{gfg}
\end{equation}%
\newline
\indent By Corollary \ref{coro1405}, for each $n\in 
%TCIMACRO{\U{2115} }%
%BeginExpansion
\mathbb{N}
%EndExpansion
$%
\begin{equation}
\left\Vert \left( \varphi _{1},...,\varphi _{n}\right) \right\Vert _{\left(
X^{\ast }\right) ^{n},Y^{\times },\tau }\leq \underset{1\leq j\leq k}{%
\sup_{k\in 
%TCIMACRO{\U{2115} }%
%BeginExpansion
\mathbb{N}
%EndExpansion
}}\left\{ \left\Vert \bigvee\limits_{j=1}^{k}\sum_{i=1}^{n}a_{i,j}\varphi
_{i}\right\Vert _{X^{\ast }}:\left\Vert \left( a_{1,j},...,a_{n,j}\right)
\right\Vert _{Y}\leq 1.\right\} .  \label{ioio}
\end{equation}%
\indent Let us fix $n,k\in 
%TCIMACRO{\U{2115} }%
%BeginExpansion
\mathbb{N}
%EndExpansion
$ and $a_{1,j},...,a_{n,j}\in 
%TCIMACRO{\U{211d} }%
%BeginExpansion
\mathbb{R}
%EndExpansion
$ such that $\left\Vert \left( a_{1,j},...,a_{n,j}\right) \right\Vert
_{Y}\leq 1,1\leq j\leq k.$ Then, by $\left( \ref{1303}\right) $ we have that%
\begin{equation}
\left\vert \bigvee\limits_{j=1}^{k}\sum_{i=1}^{n}a_{i,j}\varphi
_{i}\right\vert \leq \bigvee\limits_{j=1}^{k}\sum_{i=1}^{n}\left\vert
a_{i,j}\right\vert \left\vert \varphi _{i}\right\vert \leq
\bigvee\limits_{j=1}^{k}\sum_{i=1}^{n}\left\vert a_{i,j}\right\vert
\left\vert \varphi \right\vert _{i}.  \label{op2}
\end{equation}%
\indent Since $\bigvee\limits_{j=1}^{k}\sum_{i=1}^{n}\left\vert
a_{i,j}\right\vert \left\vert \varphi \right\vert _{i}$ is a positive
functional it follows from $\left( \ref{positive}\right) $ and $\left( \ref%
{defi1802}\right) $ that, for each $x\in X$%
\begin{eqnarray}
\left\vert \bigvee\limits_{j=1}^{k}\sum_{i=1}^{n}\left\vert
a_{i,j}\right\vert \left\vert \varphi \right\vert _{i}\left( x\right)
\right\vert &\leq &\left( \bigvee\limits_{j=1}^{k}\sum_{i=1}^{n}\left\vert
a_{i,j}\right\vert \left\vert \varphi \right\vert _{i}\right) \left(
\left\vert x\right\vert \right)  \label{koko} \\
&&  \notag \\
&=&\sup \left\{ \sum_{j=1}^{k}\left( \sum_{i=1}^{n}\left\vert
a_{i,j}\right\vert \left\vert \varphi \right\vert _{i}\right) \left(
x_{j}\right) \right\}  \notag \\
&&  \notag \\
&=&\sup \left\{ \left\vert \varphi \right\vert \left( \left\{
\sum_{j=1}^{k}\left\vert a_{m,j}\right\vert x_{j}\right\} _{m=1}^{n}\right)
\right\} ,  \notag
\end{eqnarray}%
where the supremum is taken over all $x_{j}\geq 0$ such that $%
\sum_{j=1}^{k}x_{j}=\left\vert x\right\vert .$\newline
\newline
\indent Take $x_{1},...,x_{k}\geq 0$ such that $\sum_{j=1}^{k}x_{j}=\left%
\vert x\right\vert $ and let us denote $\left\Vert \left\vert \varphi
\right\vert \right\Vert _{\ell _{0}^{\tau ,Y}\left( X\right) ^{\ast }}$ by $%
c.$ Then, by Lemma \ref{lema23052} 
\begin{eqnarray}
\left\vert \varphi \right\vert \left( \left\{ \sum_{j=1}^{k}\left\vert
a_{m,j}\right\vert x_{j}\right\} _{m=1}^{n}\right) \hspace{-0.2 cm} &\leq
&c\left\Vert \left( \sum_{j=1}^{k}\left\vert a_{1,j}\right\vert
x_{j},...,\sum_{j=1}^{k}\left\vert a_{n,j}\right\vert x_{j}\right)
\right\Vert _{\ell _{0}^{\tau ,Y}\left( X\right) }  \label{fin1} \\
&&  \notag \\
&=&c\left\Vert \left( \sum_{j=1}^{k}\left\vert a_{1,j}\right\vert
x_{j},...,\sum_{j=1}^{k}\left\vert a_{n,j}\right\vert x_{j}\right)
\right\Vert _{X^{n},Y,\tau }  \notag \\
&&  \notag \\
&=&c\left\Vert \left\Vert \left( \sum_{j=1}^{k}\left\vert a_{1,j}\right\vert
x_{j},...,\sum_{j=1}^{k}\left\vert a_{n,j}\right\vert x_{j}\right)
\right\Vert _{Y}\right\Vert _{X}  \notag \\
&&  \notag \\
&\leq &c\left\Vert \sum_{j=1}^{k}\left\Vert \left( \left\vert
a_{1,j}\right\vert x_{j},...,\left\vert a_{n,j}\right\vert x_{j}\right)
\right\Vert _{Y}\right\Vert _{X}.  \notag
\end{eqnarray}%
\newline
\indent Now let us define $G:%
%TCIMACRO{\U{211d} }%
%BeginExpansion
\mathbb{R}
%EndExpansion
^{n}\rightarrow 
%TCIMACRO{\U{211d} }%
%BeginExpansion
\mathbb{R}
%EndExpansion
^{n}$ by $G\left( t_{1},...,t_{n}\right) =\left( \left\vert
a_{1,j}\right\vert t_{1},...,\left\vert a_{n,j}\right\vert t_{n}\right) .$
Observe that $G\in \mathcal{H}_{n}^{n}$ and consequently, by $\left( \ref%
{smt2}\right) $, $\left\Vert \cdot \right\Vert _{%
%TCIMACRO{\U{211d} }%
%BeginExpansion
\mathbb{R}
%EndExpansion
^{n},Y}\circ G\in \mathcal{H}_{n}.$ Then it follows from Lemma \ref{lematmm}
and Proposition \ref{propkrivi} that%
\begin{eqnarray}
\left\Vert \left( \left\vert a_{1,j}\right\vert x_{j},...,\left\vert
a_{n,j}\right\vert x_{j}\right) \right\Vert _{Y} &=&\tau _{\left( \left(
\left\vert a_{1,j}\right\vert x_{j},...,\left\vert a_{n,j}\right\vert
x_{j}\right) \right) }\left( \left\Vert \cdot \right\Vert _{%
%TCIMACRO{\U{211d} }%
%BeginExpansion
\mathbb{R}
%EndExpansion
^{n},Y}\right)  \label{fin2} \\
&&  \notag \\
&=&\tau _{\left( x_{j},...,x_{j}\right) }\left( \left\Vert \cdot \right\Vert
_{%
%TCIMACRO{\U{211d} }%
%BeginExpansion
\mathbb{R}
%EndExpansion
^{n},Y}\circ G\right)  \notag \\
&&  \notag \\
&\leq &\left( \left\vert x_{j}\right\vert \vee ...\vee \left\vert
x_{j}\right\vert \right) \left\Vert \left( \left\Vert \cdot \right\Vert _{%
%TCIMACRO{\U{211d} }%
%BeginExpansion
\mathbb{R}
%EndExpansion
^{n},Y}\circ G\right) \right\Vert _{\mathcal{H}_{n}}  \notag \\
&&  \notag \\
&=&\left\vert x_{j}\right\vert \underset{\left( t_{1},...,t_{n}\right) \in
S^{n-1}}{\sup }\left\{ \left\Vert \left( \left\vert a_{1,j}\right\vert
t_{1},...,\left\vert a_{n,j}\right\vert t_{n}\right) \right\Vert _{Y}\right\}
\notag \\
&&  \notag \\
&\leq &\left\vert x_{j}\right\vert =x_{j}.  \notag
\end{eqnarray}%
\indent Then, due to $\left( \ref{fin1}\right) $ and $\left( \ref{fin2}%
\right) $ 
\begin{equation}
\left\vert \varphi \right\vert \left( \sum_{j=1}^{k}\left\vert
a_{1,j}\right\vert x_{j},...,\sum_{j=1}^{k}\left\vert a_{n,j}\right\vert
x_{j}\right) \leq \left\Vert \varphi \right\Vert _{\ell _{0}^{\tau ,Y}\left(
X\right) ^{\ast }}\left\Vert \sum_{j=1}^{k}x_{j}\right\Vert _{X}=\left\Vert
\varphi \right\Vert _{\ell _{0}^{\tau ,Y}\left( X\right) ^{\ast }}\left\Vert
x\right\Vert _{X}  \label{koko1}
\end{equation}%
\indent Thus, from $\left( \ref{koko}\right) $ and $\left( \ref{koko1}%
\right) $%
\begin{equation*}
\left\Vert \bigvee\limits_{j=1}^{k}\sum_{i=1}^{n}\left\vert
a_{i,j}\right\vert \left\vert \varphi \right\vert _{i}\right\Vert _{X^{\ast
}}=\sup \left\{ \bigvee\limits_{j=1}^{k}\sum_{i=1}^{n}\left\vert
a_{i,j}\right\vert \left\vert \varphi \right\vert _{i}\left( x\right) :x\in
B_{X}\right\} \leq \left\Vert \varphi \right\Vert _{\ell _{0}^{\tau
,Y}\left( X\right) ^{\ast }}.
\end{equation*}%
\indent Therefore $\left( \ref{gfg}\right) $ is satisfied and consequently $%
s\in \ell ^{\tau ,Y^{\times }}\left( X^{\ast }\right) .$ \newline
\newline
\indent On the other hand for each $x\in \left\{ x_{n}\right\}
_{n=1}^{\infty }\in \ell _{00}^{\tau ,Y}\left( X\right) ,$%
\begin{equation*}
Js\left( x\right) =\varphi _{s}\left( x\right) =\sum_{n=1}^{\infty }\varphi
_{n}\left( x_{n}\right) =\sum_{n=1}^{\infty }\varphi \left( e_{n}\cdot
x_{n}\right) =\varphi \left( \sum_{n=1}^{\infty }e_{n}\cdot x_{n}\right)
=\varphi \left( x\right) .
\end{equation*}%
\indent Then $Js=\varphi $ in $\ell _{00}^{\tau ,Y}\left( X\right) $ and
consequently in $\ell _{0}^{\tau ,Y}\left( X\right) .$ So $J$ is surjective.
Finally it follows from $\left( \ref{gfg}\right) $ that $\left\Vert
Js\right\Vert _{\ell _{0}^{\tau ,Y}\left( X\right) ^{\ast }}\geq \left\Vert
s\right\Vert _{\ell ^{\tau ,Y^{\times }}\left( X^{\ast }\right) }$ proving
this way that $J$ is an isometry and $\left( \ref{30032}\right) $ is
satisfied$.$
\end{proof}

\section{Duality between $Y$-convexity and $Y^{\times }$-concavity}

\begin{theorem}
Let $X$ be a Banach lattice and $E$ a Banach space.\label{thmdualidad}%
.\smallskip \newline
\indent$i)$ Let $T\in \mathcal{L}\left( E,X\right) $. Then $T$ is $Y$-convex
if and only if $T^{\ast }:X^{\ast }\rightarrow E^{\ast }$ is $Y^{\times }$%
-concave. In this case, $\left\Vert T\right\Vert _{K^{Y}}=\left\Vert T^{\ast
}\right\Vert _{K_{Y^{\times }}}.$\smallskip \newline
\indent$ii)$ Let $S\in \mathcal{L}\left( X,E\right) .$ Then $S\ $is $Y$%
-concave if and only if $S^{\ast }:E^{\ast }\rightarrow X^{\ast }$ is $%
Y^{\times }$-convex. In this case, $\left\Vert S^{\ast }\right\Vert
_{K^{Y}}=\left\Vert S\right\Vert _{K_{Y^{\times }}}.$\smallskip \newline
\end{theorem}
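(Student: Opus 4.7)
The plan is to prove (i) by establishing both inequalities $\|T^{\ast}\|_{K_{Y^{\times}}}\leq\|T\|_{K^{Y}}$ and $\|T\|_{K^{Y}}\leq\|T^{\ast}\|_{K_{Y^{\times}}}$ through direct estimates that combine Lemma \ref{lemaqcv}, H\"older's inequality (Proposition \ref{holder}), and a finite-dimensional duality formula drawn from Theorem \ref{tmapen2}; part (ii) will then follow by the symmetric argument with the roles of convex/concave and $Y$/$Y^{\times}$ interchanged.

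For the forward direction of (i), assuming $T$ is $Y$-convex, I would fix $\varphi_{1},\dots,\varphi_{n}\in X^{\ast}$ and rewrite
\begin{equation*}
\|(T^{\ast}\varphi_{1},\dots,T^{\ast}\varphi_{n})\|_{(E^{\ast})^{n},Y^{\times}}=\sup\Big\{\sum_{j=1}^{n}\varphi_{j}(Tw_{j}):\|(w_{1},\dots,w_{n})\|_{E^{n},Y}\leq 1\Big\},
\end{equation*}
using that $\|T^{\ast}\varphi_{j}\|_{E^{\ast}}=\sup_{w\in B_{E}}|\varphi_{j}(Tw)|$ together with the duality $\|(\beta_{j})\|_{Y^{\times}}=\sup\{\sum a_{j}\beta_{j}:(a_{j})\geq 0,\,\|(a_{j})\|_{Y}\leq 1\}$. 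Then Lemma \ref{lemaqcv} bounds $\sum|\varphi_{j}(Tw_{j})|$ by $\|(Tw_{1},\dots,Tw_{n})\|_{X^{n},Y,\tau}\cdot\|(\varphi_{1},\dots,\varphi_{n})\|_{(X^{\ast})^{n},Y^{\times},\tau}$, and $Y$-convexity of $T$ collapses the first factor to at most $\|T\|_{K^{Y}}$, giving the desired inequality.

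For the reverse direction, I first need the finite-dimensional duality
\begin{equation*}
\|(x_{1},\dots,x_{n})\|_{X^{n},Y,\tau}=\sup\Big\{\sum_{j=1}^{n}\varphi_{j}(x_{j}):\|(\varphi_{1},\dots,\varphi_{n})\|_{(X^{\ast})^{n},Y^{\times},\tau}\leq 1\Big\},
\end{equation*}
which I would extract from Theorem \ref{tmapen2}: the map $(x_{1},\dots,x_{n})\mapsto(x_{1},\dots,x_{n},0,\dots)$ embeds $X^{n}$ isometrically into $\ell_{0}^{\tau,Y}(X)$, the analogous embedding holds for $(X^{\ast})^{n}\hookrightarrow\ell^{\tau,Y^{\times}}(X^{\ast})=\ell_{0}^{\tau,Y}(X)^{\ast}$, and Hahn-Banach shows these give the dual identification $(X^{n},\|\cdot\|_{X^{n},Y,\tau})^{\ast}=((X^{\ast})^{n},\|\cdot\|_{(X^{\ast})^{n},Y^{\times},\tau})$ under the pairing $\sum\varphi_{j}(x_{j})$. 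Applying this with $x_{j}=Tw_{j}$ and using $\varphi_{j}(Tw_{j})=(T^{\ast}\varphi_{j})(w_{j})$, followed by termwise estimation $\sum(T^{\ast}\varphi_{j})(w_{j})\leq\sum\|T^{\ast}\varphi_{j}\|_{E^{\ast}}\|w_{j}\|_{E}$, H\"older's inequality, and $Y^{\times}$-concavity of $T^{\ast}$, produces $\|(Tw_{1},\dots,Tw_{n})\|_{X^{n},Y,\tau}\leq\|T^{\ast}\|_{K_{Y^{\times}}}\|(w_{1},\dots,w_{n})\|_{E^{n},Y}$, which is $Y$-convexity of $T$ with the desired constant.

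The main obstacle I anticipate is verifying the finite-dimensional duality formula cleanly; once that is in hand the rest is a sequence of H\"older-type estimates. Part (ii) proceeds by the mirror argument: the forward direction uses the duality formula for $\|\cdot\|_{X^{n},Y,\tau}$ and Proposition \ref{holder} on $\sum\varphi_{j}(Sx_{j})=\sum(S^{\ast}\varphi_{j})(x_{j})$ together with $Y$-concavity of $S$, while the reverse direction uses the $Y^{\times}$-dual description of $\|\cdot\|_{(E^{\ast})^{n},Y^{\times}}$ together with Lemma \ref{lemaqcv} and $Y^{\times}$-convexity of $S^{\ast}$.
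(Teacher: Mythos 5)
Your argument is correct, and it uses the same ingredients as the paper (Lemma \ref{lemaqcv}, H\"older's inequality, and the dual identifications of Theorems \ref{tmapen} and \ref{tmapen2} restricted to finite sequences), but it packages them differently. The paper proves the single intertwining identity $\rho _{n}\circ \left( T^{\ast }\right) _{n}=\left( T_{n}\right) ^{\ast }\circ J_{n}$, where $\rho _{n}$ and $J_{n}$ are the restrictions of the isometries $\rho $ and $J$ to $n$-tuples, and then reads off both inequalities at once from $\left\Vert \left( T^{\ast }\right) _{n}\right\Vert =\left\Vert \left( T_{n}\right) ^{\ast }\right\Vert =\left\Vert T_{n}\right\Vert $; you instead unfold this into two explicit supremum estimates. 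Your forward direction is in fact slightly more economical than the paper's: it needs only the ``easy'' halves of the dualities, namely Lemma \ref{lemaqcv} and the definition of $\left\Vert \cdot \right\Vert _{Y^{\times }}$ as a supremum, and not the surjectivity of $\rho $ or $J$. Your reverse direction does need the full finite-dimensional duality $\left( X^{n},\left\Vert \cdot \right\Vert _{X^{n},Y,\tau }\right) ^{\ast }=\left( \left( X^{\ast }\right) ^{n},\left\Vert \cdot \right\Vert _{\left( X^{\ast }\right) ^{n},Y^{\times },\tau }\right) $, exactly as the paper does through $J_{n}$; the point requiring care, which you correctly flag, is that a norm-one functional on $X^{n}$ must be representable by some $\left( \varphi _{1},...,\varphi _{n}\right) $ with $\left\Vert \left( \varphi _{1},...,\varphi _{n}\right) \right\Vert _{\left( X^{\ast }\right) ^{n},Y^{\times },\tau }\leq 1$. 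This does follow as you sketch: extend by Hahn--Banach to $\ell _{0}^{\tau ,Y}\left( X\right) $, represent by Theorem \ref{tmapen2}, and truncate, noting that the estimate $\left( \ref{gfg}\right) $ gives the required norm control on the truncation. With that in place both inequality chains close, the constants match, and part $ii)$ is the mirror image, so the proposal is sound.
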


\begin{proof}
Since $T\in \mathcal{L}\left( E,X\right) $ we have that $T^{\ast }\in 
\mathcal{L}\left( X^{\ast },E^{\ast }\right) .$ For each $n\in 
%TCIMACRO{\U{2115} }%
%BeginExpansion
\mathbb{N}
%EndExpansion
$ consider the operators%
\begin{equation*}
\left( T^{\ast }\right) _{n}:\left( \left( X^{\ast }\right) ^{n},\left\Vert
\cdot \right\Vert _{\left( X^{\ast }\right) ^{n},Y^{\times },\tau }\right)
\rightarrow \left( \left( E^{\ast }\right) ^{n},\left\Vert \cdot \right\Vert
_{\left( E^{\ast }\right) ^{n},Y^{\times }}\right)
\end{equation*}%
and%
\begin{equation*}
\left( T_{n}\right) ^{\ast }:\left( X^{n},\left\Vert \cdot \right\Vert
_{X^{n},Y,\tau }\right) ^{\ast }\rightarrow \left( E^{n},\left\Vert \cdot
\right\Vert _{E^{n},Y}\right) ^{\ast }.
\end{equation*}%
\indent By Lemma \ref{nnlema} and Proposition \ref{propoca} these operators
are well defined and continuous. \newline
\newline
\indent Now consider the operators $\rho _{n}:\left( \left( E^{\ast }\right)
^{n},\left\Vert \cdot \right\Vert _{\left( E^{\ast }\right) ^{n},Y^{\times
}}\right) \rightarrow \left( E^{n},\left\Vert \cdot \right\Vert
_{E^{n},Y}\right) ^{\ast }$ and $J_{n}:\left( \left( X^{\ast }\right)
^{n},\left\Vert \cdot \right\Vert _{\left( X^{\ast }\right) ^{n},Y^{\times
},\tau }\right) \rightarrow \left( X^{n},\left\Vert \cdot \right\Vert
_{X^{n},Y,\tau }\right) ^{\ast }$ given by the restriction of operators $%
\rho $ and $J$ defined in $\left( \ref{line}\right) $ and $\left( \ref%
{dedede2}\right) $ respectively. Next we will prove that 
\begin{equation}
\rho _{n}\circ \left( T^{\ast }\right) _{n}=\left( T_{n}\right) ^{\ast
}\circ J_{n}.  \label{wnb}
\end{equation}%
\indent Let $s=\left\{ \varphi _{j}\right\} _{j=1}^{n}\in \left( X^{\ast
}\right) ^{n}.$ Then the functional $\left( T_{n}\right) ^{\ast }\circ
J_{n}\left( s\right) \in \left( E^{n},\left\Vert \cdot \right\Vert
_{Y}\right) ^{\ast }$ satisfies that, for each $\left\{ w_{j}\right\}
_{j=1}^{n}\in E^{n}$%
\begin{eqnarray*}
\left\langle \left\{ w_{j}\right\} _{j=1}^{n},\left( T_{n}\right) ^{\ast
}\circ J_{n}\left( s\right) \right\rangle _{E^{n}\times \left( E^{n}\right)
^{\ast }} &=&\left\langle T_{n}\left( \left\{ w_{j}\right\}
_{j=1}^{n}\right) ,J_{n}\left( s\right) \right\rangle _{X^{n}\times \left(
X^{n}\right) ^{\ast }} \\
&& \\
&=&\left\langle \left( \left\{ Tw_{j}\right\} _{j=1}^{n}\right) ,J_{n}\left(
s\right) \right\rangle _{X^{n}\times \left( X^{n}\right) ^{\ast }} \\
&& \\
&=&\sum_{j=1}^{n}\varphi _{j}\left( Tw_{j}\right)
=\sum_{j=1}^{n}\left\langle w_{j},T^{\ast }\varphi _{j}\right\rangle
_{E\times E^{\ast }} \\
&& \\
&=&\left\langle \left\{ w_{j}\right\} _{j=1}^{n},\rho _{n}\left( \left\{
T^{\ast }\varphi _{j}\right\} _{j=1}^{n}\right) \right\rangle _{E^{n}\times
\left( E^{n}\right) ^{\ast }} \\
&& \\
&=&\left\langle \left\{ w_{j}\right\} _{j=1}^{n},\rho _{n}\circ \left(
T^{\ast }\right) _{n}\left( s\right) \right\rangle _{E^{n}\times \left(
E^{n}\right) ^{\ast }}.
\end{eqnarray*}%
\indent Therefore $\left( \ref{wnb}\right) $ is satisfied. Since $\rho $ and 
$J$ are isometric linear operators, it follows that $\rho _{n}$ and $J_{n}$
are also isometries and consequently%
\begin{equation*}
\left\Vert \left( T^{\ast }\right) _{n}\right\Vert =\left\Vert \left(
T_{n}\right) ^{\ast }\right\Vert .
\end{equation*}%
\newline
\indent$i)$ Now, assume that $T$ is $Y$-convex. Then, for each $n\in 
%TCIMACRO{\U{2115} }%
%BeginExpansion
\mathbb{N}
%EndExpansion
$ and $w_{1},...,w_{n}\in E$ 
\begin{equation*}
\left\Vert \left( T_{n}\left( \left\{ w_{j}\right\} _{j=1}^{n}\right)
\right) \right\Vert _{X^{n},Y,\tau }=\left\Vert \left(
Tw_{1},...,Tw_{n}\right) \right\Vert _{X^{n},Y,\tau }\leq \left\Vert
T\right\Vert _{K^{Y}}\left\Vert \left\{ w_{j}\right\} _{j=1}^{n}\right\Vert
_{E^{n},Y}.
\end{equation*}%
\indent Thus 
\begin{equation*}
\left\Vert \left( T^{\ast }\right) _{n}\right\Vert =\left\Vert \left(
T_{n}\right) ^{\ast }\right\Vert =\left\Vert T_{n}\right\Vert \leq
\left\Vert T\right\Vert _{K^{Y}}
\end{equation*}%
and consequently, for every $\varphi _{1},...,\varphi _{n}\in X^{\ast }$%
\begin{eqnarray*}
\left\Vert \left( T^{\ast }\varphi _{1},...,T^{\ast }\varphi _{n}\right)
\right\Vert _{\left( E^{\ast }\right) ^{n},Y} &=&\left\Vert \left( \left(
T^{\ast }\right) _{n}\left( \left\{ \varphi _{j}\right\} _{j=1}^{n}\right)
\right) \right\Vert _{\left( E^{\ast }\right) ^{n},Y} \\
&\leq &\left\Vert T\right\Vert _{\mathcal{K}^{Y}}\left\Vert \left( \left\{
\varphi _{j}\right\} _{j=1}^{n}\right) \right\Vert _{\left( X^{\ast }\right)
^{n},Y^{\times },\tau }.
\end{eqnarray*}%
\indent We conclude that $T^{\ast }$ is $Y^{\times }$-concave and 
\begin{equation}
\left\Vert T^{\ast }\right\Vert _{\mathcal{K}_{Y^{\times }}}\leq \left\Vert
T\right\Vert _{\mathcal{K}^{Y}}.\newline
\label{109}
\end{equation}%
\newline
\indent On the other hand suppose that $T^{\ast }$ is $Y^{\times }$-concave.
Then, for every $n\in 
%TCIMACRO{\U{2115} }%
%BeginExpansion
\mathbb{N}
%EndExpansion
$ and $\varphi _{1},...,\varphi _{n}\in X^{\ast },$ 
\begin{eqnarray*}
\left\Vert \left( \left( T^{\ast }\right) _{n}\left( \left\{ \varphi
_{j}\right\} _{j=1}^{n}\right) \right) \right\Vert _{\left( E^{\ast }\right)
^{n},Y} &=&\left\Vert \left( T^{\ast }\varphi _{1},...,T^{\ast }\varphi
_{n}\right) \right\Vert _{\left( E^{\ast }\right) ^{n},Y} \\
&\leq &\left\Vert T^{\ast }\right\Vert _{K_{Y^{\times }}}\left\Vert \left(
\left\{ \varphi _{j}\right\} _{j=1}^{n}\right) \right\Vert _{\left( X^{\ast
}\right) ^{n},Y^{\times },\tau }.
\end{eqnarray*}%
\indent Thus 
\begin{equation*}
\left\Vert T_{n}\right\Vert =\left\Vert \left( T_{n}\right) ^{\ast
}\right\Vert =\left\Vert \left( T^{\ast }\right) _{n}\right\Vert \leq
\left\Vert T^{\ast }\right\Vert _{K_{Y^{\times }}}
\end{equation*}%
and consequently, for every $w_{1},...,w_{n}\in E,$%
\begin{equation*}
\left\Vert \left( Tw_{1},...,Tw_{n}\right) \right\Vert _{X^{n},Y,\tau
}=\left\Vert \left( T_{n}\left( \left\{ w_{j}\right\} _{j=1}^{n}\right)
\right) \right\Vert _{X^{n},Y,\tau }\leq \left\Vert T^{\ast }\right\Vert
_{K_{Y^{\times }}}\left\Vert \left\{ w_{j}\right\} _{j=1}^{n}\right\Vert
_{E^{n},Y}.
\end{equation*}%
\indent We conclude that $T$ is $Y$-convex and 
\begin{equation}
\left\Vert T\right\Vert _{\mathcal{K}^{Y}}\leq \left\Vert T^{\ast
}\right\Vert _{K_{Y^{\times }}}.  \label{111}
\end{equation}%
\indent Finally, it follows from $\left( \ref{109}\right) $ and $\left( \ref%
{111}\right) $ that 
\begin{equation*}
\left\Vert T\right\Vert _{\mathcal{K}^{Y}}=\left\Vert T^{\ast }\right\Vert
_{K_{Y^{\times }}}.
\end{equation*}%
\newline
\indent$ii)$ This proof is analogous to the previous one.
\end{proof}

\bigskip

By taking $E=X$ and $T,S$ as the identity maps in the above theorem we
obtain the following corollary.

\begin{corollary}
Let $X$ be a Banach lattice and $Y$ a Banach sequence lattice.
Then\smallskip \newline
\indent$i)$ $X$ is $Y$-convex if and only if $X^{\ast }$ is $Y^{\times }$%
-concave . In this case, $M_{Y^{\times }}\left( X^{\ast }\right)
=M^{Y}\left( X\right) .$\smallskip \newline
\indent$ii)$ $X$ is $Y$-concave if and only if $X^{\ast }$ is $Y^{\times }$%
-convex. In this case, $M^{Y^{\times }}\left( X^{\ast }\right) =M_{Y}\left(
X\right) .$
\end{corollary}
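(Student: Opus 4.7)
The plan is to reduce each equivalence in (i) to a level-by-level duality at each $n \in \mathbb{N}$ and then take suprema. For part (i), define for each $n$ the componentwise operators $T_n : E^n \to X^n$ by $T_n(w_1,\ldots,w_n) := (Tw_1,\ldots,Tw_n)$, and similarly $(T^*)_n : (X^*)^n \to (E^*)^n$; regard $T_n$ as acting between the normed spaces $(E^n,\|\cdot\|_{E^n,Y})$ and $(X^n,\|\cdot\|_{X^n,Y,\tau})$, and $(T^*)_n$ as acting between $((X^*)^n,\|\cdot\|_{(X^*)^n,Y^\times,\tau})$ and $((E^*)^n,\|\cdot\|_{(E^*)^n,Y^\times})$. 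Unpacking the definitions of the two convexity/concavity constants yields
\begin{equation*}
\|T\|_{\mathcal{K}^Y} = \sup_{n \in \mathbb{N}} \|T_n\| \qquad \text{and} \qquad \|T^*\|_{\mathcal{K}_{Y^\times}} = \sup_{n \in \mathbb{N}} \|(T^*)_n\|.
\end{equation*}
Hence it suffices to establish that $\|T_n\| = \|(T^*)_n\|$ for every $n$.

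The key step is to identify $(T^*)_n$ with the classical Banach-space adjoint $(T_n)^* : (X^n,\|\cdot\|_{X^n,Y,\tau})^* \to (E^n,\|\cdot\|_{E^n,Y})^*$, using the representation theorems of Sections 5 and 6. Restricting the isometric isomorphisms $\rho$ (Theorem \ref{tmapen}) and $J$ (Theorem \ref{tmapen2}) to sequences supported on coordinates $\{1,\ldots,n\}$, which embed isometrically into $\ell_0^Y(E)$ and $\ell_0^{\tau,Y}(X)$ via (\ref{e}) and (\ref{inciso}), I obtain isometric isomorphisms $\rho_n : ((E^*)^n,\|\cdot\|_{(E^*)^n,Y^\times}) \to (E^n,\|\cdot\|_{E^n,Y})^*$ given by $\rho_n(\varphi)(w) = \sum_{j=1}^n \varphi_j(w_j)$, and $J_n : ((X^*)^n,\|\cdot\|_{(X^*)^n,Y^\times,\tau}) \to (X^n,\|\cdot\|_{X^n,Y,\tau})^*$ given by $J_n(\varphi)(x) = \sum_{j=1}^n \varphi_j(x_j)$. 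The central identity is
\begin{equation*}
\rho_n \circ (T^*)_n = (T_n)^* \circ J_n,
\end{equation*}
whose verification is a direct pairing computation: both sides, evaluated at $\varphi = (\varphi_1,\ldots,\varphi_n)$ and $w = (w_1,\ldots,w_n)$, produce $\sum_{j=1}^n \varphi_j(Tw_j) = \sum_{j=1}^n (T^*\varphi_j)(w_j)$. Since $\rho_n$ and $J_n$ are isometries and $\|(T_n)^*\| = \|T_n\|$, this forces $\|(T^*)_n\| = \|T_n\|$; taking suprema over $n$ delivers the equivalence together with the equality of constants in (i).

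Part (ii) is proved by the same diagram with the roles of the two sequence spaces interchanged; no new ideas are needed. Since the heavy lifting has already been carried out in Theorems \ref{tmapen} and \ref{tmapen2}, the only real obstacle I anticipate here is bookkeeping, specifically verifying that $\rho_n$ and $J_n$ really are surjective onto the full duals of the finite-dimensional normed spaces $(E^n,\|\cdot\|_{E^n,Y})$ and $(X^n,\|\cdot\|_{X^n,Y,\tau})$. This follows by trivially extending any $\psi \in (E^n)^*$ to $\tilde\psi \in \ell_0^Y(E)^*$ via $\tilde\psi(\{w_j\}) := \psi(w_1,\ldots,w_n)$, applying the surjectivity of $\rho$ to write $\tilde\psi = \rho(\{\varphi_j\}_{j=1}^\infty)$, and observing that then $\psi = \rho_n(\varphi_1,\ldots,\varphi_n)$; the argument for $J_n$ is identical.
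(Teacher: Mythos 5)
Your proof is correct and is essentially the paper's own argument: the paper establishes exactly this duality for a general operator $T\in \mathcal{L}\left( E,X\right) $ in Theorem \ref{thmdualidad}, via the same identity $\rho _{n}\circ \left( T^{\ast }\right) _{n}=\left( T_{n}\right) ^{\ast }\circ J_{n}$ with the same restricted isometries $\rho _{n}$ and $J_{n}$ coming from Theorems \ref{tmapen} and \ref{tmapen2}, and then deduces the corollary by taking $E=X$ and $T,S$ to be the identity. The only cosmetic differences are that you package both constants as $\sup_{n}$ of operator norms instead of running the two implications separately, and that you call $E^{n}$ and $X^{n}$ finite-dimensional (they are not unless $E$ and $X$ are, though your extension-and-restriction argument for the surjectivity of $\rho _{n}$ and $J_{n}$ never actually uses finite-dimensionality).
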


\end{document}